\newcounter{thmcount}
\theoremstyle{definition}
\newtheorem{defi}[thmcount]{Definition}
\newtheorem{Eg}[thmcount]{\textbf{Example}}
\newtheorem{Egs}[thmcount]{\textbf{Examples}}
\newtheorem{Rq}[thmcount]{\textbf{Remark}}
\newtheorem{Not}[thmcount]{\textbf{Notation}}
\newtheorem*{Eg*}{Exemple}
\newtheorem*{defi*}{Definition}
\newtheorem*{Rq*}{\textbf{Remark}}
\providecommand{\customgenericname}{}
\newcommand{\newcustomtheorem}[2]{%
	\newenvironment{#1}[1]
	{%
		\renewcommand\customgenericname{#2}%
		\renewcommand\theinnercustomgeneric{##1}%
		\innercustomgeneric
	}
	{\endinnercustomgeneric}
}
\theoremstyle{plain}
\newtheorem{Prop}[thmcount]{Proposition}
\newtheorem{Lemme}[thmcount]{Lemma}
\newtheorem{Cor}[thmcount]{Corollary}
\newtheorem{thm}[thmcount]{Theorem}
\newtheorem*{thm*}{Theorem}
\newtheorem*{Cor*}{Corollary}
\newtheorem*{Lemme*}{Lemma}
\newtheorem*{Prop*}{Proposition}
\providecommand{\customgenericname}{}
\newcommand{\newcustomtheoremplain}[2]{%
	\newenvironment{#1}[1]
	{%
		\renewcommand\customgenericname{#2}%
		\renewcommand\theinnercustomgenerictwo{##1}%
		\innercustomgenerictwo
	}
	{\endinnercustomgeneric}
}
\NewDocumentCommand{\Y}{m m}{\raisebox{-0.25\height}{\begin{tikzpicture}[line cap=round,line join=round,>=triangle 45,x=0.25cm,y=0.25cm]
			\draw [line width=.5pt] (0.,0.)-- (0.,1.);
			\draw [line width=.5pt] (0.,1.)-- (-1.,2.);
			\draw [line width=.5pt] (0.,1.)-- (1.,2.);
			\draw[above] (1,2) node {\small{#2}};
			\draw[above] (-1,2) node {\small{#1}}; 
\end{tikzpicture}}}
\NewDocumentCommand{\Yind}{}{\begin{tikzpicture}[line cap=round,line join=round,>=triangle 45,x=0.1cm,y=0.1cm]
			\draw [line width=.5pt] (0.,0.)-- (0.,1.);
			\draw [line width=.5pt] (0.,1.)-- (-1.,2.);
			\draw [line width=.5pt] (0.,1.)-- (1.,2.);
\end{tikzpicture}}
\NewDocumentCommand{\YY}{m m m m}{\raisebox{-0.3\height}{\begin{tikzpicture}[line cap=round,line join=round,>=triangle 45,x=0.3cm,y=0.3cm]
			\draw (0,0)--(0,1);
			\draw (0,1)--(-2,3);
			\draw (-1.25,2.25)--(-0.5,3);
			\draw (1.25,2.25)--(0.5,3);
			\draw (0,1)--(2,3);
			\draw[above] (-2,3) node{\small{#1}} ;
			\draw[above]  (-0.5,3) node{\small{#2}};
			\draw[above]  (0.5,3) node{\small{#3}};
			\draw[above]  (2,3) node{\small{#4}};
\end{tikzpicture}}}
\NewDocumentCommand{\balaisg}{m m m}{\raisebox{-0.3\height}{\begin{tikzpicture}[line cap=round,line join=round,>=triangle 45,x=0.4cm,y=0.4cm]
			\draw (0,0)-- (0,1);
			\draw (0,1)-- (-1,2);
			\draw (-0.5,1.5)-- (0,2);
			\draw (0,1)-- (1,2);
			\draw[above] (-1,2) node {\small{#1}};
			\draw[above] (0,2) node {\small{#2}};
			\draw[above] (1,2) node {\small{#3}};
\end{tikzpicture}}}
\NewDocumentCommand{\balaisd}{m m m}{\raisebox{-0.3\height}{\begin{tikzpicture}[line cap=round,line join=round,>=triangle 45,x=0.4cm,y=0.4cm]
			\draw (0,0)-- (0,1);
			\draw (0,1)-- (-1,2);
			\draw (0.5,1.5)-- (0,2);
			\draw (0,1)-- (1,2);
			\draw[above] (-1,2) node {\small{#1}};
			\draw[above] (0,2) node {\small{#2}};
			\draw[above] (1,2) node {\small{#3}};
\end{tikzpicture}}}
\newcommand{\dbleftbrace}{\left\{\mskip-5mu\left\{}
\newcommand{\dbrightbrace}{\right\}\mskip-5mu\right\}}
\NewDocumentCommand{\multiset}{m}{\dbleftbrace #1 \dbrightbrace}
\NewDocumentCommand{\CQMM}{}{Cartier-Quillen-Milnor-Moore}
\newcommand{\IEM}[2]{\llbracket #1,#2 \rrbracket}
\newcommand{\e}{\varepsilon}
\newcommand{\K}{\ensuremath{\mathbb{K}}}
\newcommand{\N}{\mathbb{N}}
\newcommand{\M}{\mathcal{M}}
\NewDocumentCommand{\U}{m}{\ensuremath{\mathcal{U}\left(#1\right)}}
\NewDocumentCommand{\h}{}{\ensuremath{\mathfrak{h}}}
\NewDocumentCommand{\PH}{}{Post-Hopf}
\NewDocumentCommand{\PL}{}{Post-Lie}
\NewDocumentCommand{\libra}{ m m }{\ensuremath{\left[ #1, #2\right]}}
\NewDocumentCommand{\libraJ}{ m m }{\ensuremath{\left[ #1, #2\right]_J}}
\NewDocumentCommand{\librao}{ m m }{\ensuremath{\left[ #1, #2\right]^{\mathrm{op}}}}
\NewDocumentCommand{\lhdo}{}{\ensuremath{\lhd^{\mathrm{op}}}}
\NewDocumentCommand{\cdoto}{}{\ensuremath{\cdot^{\mathrm{op}}}}
\NewDocumentCommand{\Deltao}{}{\ensuremath{\Delta^{\mathrm{cop}}}}
\DeclareMathOperator{\X}{\mathfrak{X}(\M)}
\DeclareMathOperator{\C}{\mathcal{C}^{\infty}(\M)}
\DeclareMathOperator{\ComTrias}{ComTrias}
\DeclareMathOperator{\End}{End}
\DeclareMathOperator{\Hom}{Hom}
\DeclareMathOperator{\Prim}{Prim}
\DeclareMathOperator{\ima}{Im}
\DeclareMathOperator{\car}{char}
\DeclareMathOperator{\Id}{Id}
\DeclareMathOperator{\supp}{supp}
\DeclareMathOperator{\Leaf}{Leaf}
\DeclareMathOperator{\len}{deg}
\DeclareMathOperator{\Lie}{\mathcal{L}ie}
\DeclareMathOperator{\F}{\mathcal{F}}
\DeclareMathOperator{\Tree}{BT}
\DeclareMathOperator{\Treedecleaf}{BTDL}
\DeclareMathOperator{\Fdecleaf}{FDL}
\DeclareMathOperator{\FLT}{FLPT}
\DeclareMathOperator{\lhdp}{\blacktriangleleft}
\providecommand{\keywords}[1]{\textbf{\textit{Keywords.}} #1}
\providecommand{\AMSclass}[1]{\textbf{\textit{AMS classification.}} #1}
\begin{document}
\title{{\bfseries The \CQMM{} theorem in the \PH{} case}}
\author{Pierre Catoire
	\thanks{Contact: \href{mailto: catoire_research@proton.me; 
		}
	{catoire\_research@proton.me
}}}
\affil{\small{Univeristé de Montpellier, IMAG, Institut Montpelliérain Alexander Grothendieck,
		 Place Eugène Bataillon, Montpellier, 34070, France.
	}}

\maketitle
\begin{abstract}
	\PH{} algebras have appeared in many works involving differential geometry or geometric integration. It can be obtained considering the enveloping algebra of any Post-Lie algebras. With an algebraic point of view, we classify \PH{} algebras with an analogous of \CQMM{} theorem in the cocommutative case. We also give two example of algebras. One is built on sentences taking a free associative product as $\rhd$ and the second on trees with the grafting operator for $\rhd$.
\end{abstract}

\keywords{Hopf algebra, \PL{} algebra, \PH{} algebra, trees, words.}

\AMSclass{17B060,17A30, 16T30,16T05, 16T10
}


\section*{Introduction}

The notion of \PL{} algebras was introduced at first by B.~Vallette~\cite{Vallette} as the Koszul dual of the operad $\ComTrias$ in order to study the homology of some partition posets. Later on, the \PL{} structure arose from a quite different context: \emph{differential geometry}. Indeed, given a smooth manifold $\M$ satisfying some geometric properties, its space of vector fields has a \PL{} structure. This discovery led to many development in numerical approximation to get better computations in those cases using this structure like in geometric integration~\cite{GNI,FreePL}. This paper being more algebraic than geometric, I will give some clues about this structure later.

For this \PL{} algebra, one can build a \PH{} algebra considering its enveloping algebra. Geometrically, it represents the algebra of differential operators over the manifold $\M$. Thanks to the work of Butcher~\cite{Butcher_63,Butcher_72, Butcher_96, Sanz-Serna}, in numerics, any Runge-Kutta methods has a development as a \emph{Butcher series} representing the numerical scheme. A Butcher series is a representation of some family of numerical schemes (including Runge-Kutta methods) using a formal series whose variables are trees (or words). Each tree represents an elementary differential operators of some orders involved in the numeric schemes. In our particular geometric case, it turns out that to perform the usual computations with Butcher series, the good algebraic setting is the one of the Munthe-Kaas--Wright Hopf algebra obtained as the enveloping algebra of the free \PL{} algebras described by Lyndon words over trees endowed with the left grafting operator.

In this paper, we will mainly be interested in the algebraic structure of \PH{} algebras in order to understand better its behaviour. With an algebraic point of view on \PH{} algebras, we manage to see recover the classic \CQMM{} theorem involving the additional operation of \PL{} algebras. We also give two examples of such algebras one built on sentences and the other one on trees which turns out to be isomorphic to the Munthe-Kaas--Wright algebra. We also state some algebraic properties 

More general structures exist called \emph{Yetter-Drinfeld} Hopf algebra~\cite{Sciandra_25}. It is a generalization of the \PH{} structure as assuming cocommutativity we recover the definition of our \PH{} algebras. In the context of Yetter-Drinfeld algebras, non-cocommutative examples exist whereas for \PH{} algebras there is known examples.

Before beginning our algebraic study, let us detail the context in which those algebraic structure appear in differential geometry~\cite{LPostLie,FreePL}.

Let $\M{}$ be a smooth manifold and let us denote $\X$ the space of \emph{vector fields} over $\M$. We see a \emph{vector field} $X$ as a map $\C \rightarrow \C$ such that for any $f,g\in\C$:
\[
X(f\cdot g)= X(f)\cdot g + f \cdot X(g).
\] 
Without any geometric assumption, $\X$ has a $\Lie{}$ algebra structure with the so called \emph{Jacobi bracket}~\cite{Nomizu_Kobayashi} defining a new element of $\X$ for any vector fields $X,Y\in\X$ by ${\libraJ{X}{Y}=X(Y(f))-Y(X(f))}$. 
Moreover, one can add an \emph{affine connection} $\rhd$ which is a linear map ${\X\otimes \X \rightarrow \X}$ satisfying for any vector fields $X,Y$ and $f\in\C$:
\[
f(X\rhd Y)= (f\cdot X)\rhd Y \text{ and } X \rhd (f\cdot Y) = X(f)Y+ f(X\rhd Y).
\] 
From those data, one can define the \emph{torsion} $T:\X\otimes \X\rightarrow \X$ and the \emph{curvature} $R:\X\times \X \rightarrow \End(\X)$ of the connection $\rhd$ for any vector fields $X,Y$ and $Z$ by:
\[
 T(X,Y)=X\rhd Y - Y \rhd X - \libraJ{X}{Y} \text{ and } R(X,Y)(Z)= X\rhd (Y \rhd Z) - (X \rhd Y)\rhd Z - \libraJ{X}{Y}\rhd Z.	
\]
Hence, we can express the curvature with the torsion:
\[
R(X,Y)(Z)= X \rhd (Y\rhd Z) - (X \rhd Y)\rhd Z + (Y \rhd X) \rhd Z - (X \rhd Y)\rhd Z - T(X,Y)\rhd Z.
\]
Those two maps are related with the following identity called the \emph{first Bianchi's identity} for any $X,Y,Z\in\X$:
\begin{equation}\label{eq:Bianchi}
	\sum_{i=0}^2 \left(T(T(\sigma^i(X),\sigma^i(Y)),\sigma^i(Z)) + \sigma^i(X)\rhd T(\sigma^i(Y),\sigma^i(Z))\right)=\sum_{i=0}^2 R(\sigma^i(X),\sigma^i(Y))(\sigma^i(Z))
\end{equation}

where $\sigma: X\mapsto Y, Y\mapsto Z, Z \mapsto X.$
Moreover, let us remind the definition of the \emph{covariant derivative} of $T$, denoted $\nabla T: \X\rightarrow \End(\X^2,\X)$, for any $X\in\X$ by:
\begin{equation}\label{eq:coderiv}
	\forall\,Y,Z\in\X, (\nabla T)(X)(Y,Z)=X\rhd T(Y,Z) - T(X \rhd Y,Z) - T(Y, X \rhd Z).
\end{equation}

Considering $T$ as new Lie bracket, $(\X,T,\rhd)$ is a \PL{} algebra under the assumption that the curvature $R$ of the connection $\rhd$ is $0$ (the connection is then called \emph{flat}) and its torsion $T$ satisfies $\nabla T=0$ (then the connection is said to have \emph{constant torsion}). Indeed, equation~\eqref{eq:Bianchi} becomes the Jacobi identity for the torsion Lie bracket, equation~\eqref{eq:coderiv} give rise to the left development of $\rhd$ onto a Lie bracket and finally the definition of the curvature with $R=0$ gives the last property needed.
Theses structures are appearing in numerical geometric integration. We refer the reader to specialized papers for more details~\cite{Busnot_Laurent_2025,Floystad_2020,Curry2018,Al_Kaabi_2022}. 

\subsection*{Document structure}

\begin{enumerate}
	\item In the first section, we give basic definitions about right \PH{} algebras and the construction of \PH{} structures over the tensorial algebra~\cite{siso}.
	\item The second section introduces the main theorem of this work implying the \CQMM{} theorem in the \PH{} case. We give the proof of a classic version and a graded version using intermediate lemmas.
	\item The last section gives two examples of \PH{} algebras and applications of this theorem. Hence, we find another combinatorial description of the Munthe-Kaas--Wright algebra~\cite{FreePL,Munthe_Kaas_2006}.  
\end{enumerate}

\subsection*{Acknowledgments}

 I especially thank Loïc Foissy and  Dominique Manchon for reading previous versions of this work and giving piece of advice to improve it.

\subsection*{Funding}

This project was founded by the grant ANR-20-CE40-0007 \emph{Combinatoire Algébrique, Résurgence, Probabilités Libres et Opérades.}

\subsection*{Global notations}

In our work, the base field $\K$ is a commutative field of characteristic zero. Any vector space or algebra will be taken over $\K$.
We list some notations that are used in this paper. The remaining ones will be given when needed.
 Consider $V$ a vector space, $\h$ a Lie algebra and a Hopf algebra $(H,m,1_H,\Delta,\e)$:
\begin{itemize}
	\item we denote $(\Hom(V),\circ)$ the associative algebra of linear maps from $V$ to $V$ with the composition.
	\item we denote by $\tau:V\otimes V\rightarrow V\otimes V, x\otimes y \mapsto y \otimes x$ the flip map. Moreover, we define for appropriate maps $f$ and $g$:
	\[
	f^{\mathrm{op}}\coloneqq f\circ\tau \text{ and } g^{\mathrm{cop}}\coloneqq \tau\circ g.
	\]
	\item we denote $H_+\coloneqq\ker(\e)$ and call it the \emph{augmentation ideal} of $H$.
	\item we denote $\tilde{\Delta}$ the reduced coproduct defined for any $x\in H$ by $\tilde{\Delta}(x)=\Delta(x)-1\otimes x-x\otimes 1.$
	An element $x\in H$ is said \emph{primitive} if $\tilde{\Delta}(x)=0.$
	Moreover, we use the Sweedler notation for both coproducts 
	\[
	\Delta(x)\coloneqq x^{(1)}\otimes x^{(2)} \text{ and } \tilde{\Delta}(x)\coloneqq x'\otimes x''.
	\]
	\item for any $k\in\N^*$, we define $\Delta^{(0)}(x)= x, \tilde{\Delta}^{(0)}= x-\e(x)1_H$ and by induction:
	\[
	\Delta^{(k)}=\left(\Delta \otimes \Id^{\otimes k-1}\right)\circ \Delta^{(k-1)} \text{ and } \tilde{\Delta}^{(k)}=\left(\tilde{\Delta} \otimes \Id^{\otimes k-1}\right)\circ \tilde{\Delta}^{(k-1)}.
	\]
	\item we put $V^{\otimes 0}=\K\cdot 1$ where $1$ is the empty word.
	\item we put $T(V)=\bigoplus_{n\in\N} V^{\otimes n}$ the tensor algebra. It is a Hopf algebra endowed with the concatenation product and the deshuffle coproduct defined for any $v\in V$ by $\Delta(v)=1\otimes v + v\otimes 1$.
	\item given a linear map $\ltimes:V\otimes V\rightarrow V$, we denote $a_\ltimes$ its associator, defined for any $x,y,z\in V$ by $a_\ltimes(x,y,z)=(x\ltimes y)\ltimes z-x\ltimes (y\ltimes z).$
	\item $\U{\h}$ is the enveloping algebra of $\h$ equal to $\faktor{T(\h)}{I}$ where $I$ is the ideal of $T(\h)$ generated by $\left\{ xy-yx-\libra{x}{y} \,\middle| \, x,y\in\h \right\}$.
	It inherits the Hopf algebra structure from $T(\h)$. 
	\item let $(C,\Delta)$ be a coalgebra and $(A,m)$ be an algebra. We consider $*$ the convolution product of $\Hom(C,A)$ defined for any $(f,g)\in\Hom(C,A)^2$ by $f*g = m\circ (f\otimes g) \circ \Delta.$
	\item let $(A,+,\times)$ be an associative algebra. Let $B$ be a subspace of $A$ and $W$ a subspace of $V$. We denote by $\left\langle W \right\rangle$ the subspace of $V$ generated by $W$ and $\left\langle B \right\rangle$ is the sub-algebra of $A$ generated by $B$.
\end{itemize}

\section{Left and Right Post-Lie and Post-Hopf algebras}

\subsection{Definitions}
 
We first introduce the notions of \PL{} and \PH{} algebras. 
We define right \PL{} algebras~\cite{siso}:
\begin{defi}[right \PL{} algebras]\label{defi:RPL}
	A \emph{right \PL{} algebra} is a triple $(\h, [,], \lhd)$ where $(\h,[,])$ is a Lie algebra and $\lhd:\h\otimes\h \rightarrow \h$ is a linear map satisfying for all $x,y,z\in\h$:
	\begin{align}
		x\lhd \libra{y}{z}&=a_{\lhd}(x,y,z)-a_{\lhd}(x,z,y) \label{RPL2} \\
							&=(x\lhd y)\lhd z -x\lhd (y\lhd z)-(x\lhd z)\lhd y + x\lhd (z\lhd y),\nonumber \\
		\libra{x}{y}\lhd z&=\libra{x\lhd z}{y}+\libra{x}{y\lhd z}. \label{RPL1}
	\end{align}
If the Lie algebra is abelian, then we find back the definition of \emph{right Pre-Lie algebras.}
\end{defi}


\begin{Rq}
It turns out that a \PL{} algebra has a second Lie structure with the Lie bracket $\{,\}$ defined for all $x,y\in\h$ by $\{x,y\}:=\libra{x}{y}+x\lhd y-y\lhd x$.
\end{Rq}

In the literature~\cite{EnvPost,LPostLie}, a notion of left \PL{} algebra exists related to the right one by
\begin{Lemme}\label{Lem:linkLR} 
	Let $\left(\h,\libra{}{},\lhd\right)$ be a right \PL{} algebra. Then $\left(\h,\librao{}{},\lhdo\right)$ is a right \PL{} algebra.
\end{Lemme}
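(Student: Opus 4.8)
The plan is to unwind the definitions of the opposite operations and then verify the two defining relations \eqref{RPL2} and \eqref{RPL1} of a right \PL{} algebra by direct substitution into the left \PL{} relations \eqref{LPL1} and \eqref{LPL2}. Writing $\rhd$ for $\lhdo$ and recalling that $x\lhdo y=y\lhd x$ together with $\librao{x}{y}=\libra{y}{x}$, every right-sided expression built from $\rhd$ and $\librao{}{}$ translates into a left-sided expression built from $\lhd$ and $\libra{}{}$; the goal is then to see that each translated identity coincides, up to reordering of summands, with one of the left \PL{} relations.

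First I would check that $\bigl(\h,\librao{}{}\bigr)$ is again a Lie algebra. Since $\librao{x}{y}=\libra{y}{x}=-\libra{x}{y}$, the opposite bracket is simply the negative of the original one, so antisymmetry is immediate and the Jacobi identity is preserved under a global sign; this step is routine.

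Next I would verify the Leibniz-type relation \eqref{RPL1}, that is $\librao{x}{y}\rhd z=\librao{x\rhd z}{y}+\librao{x}{y\rhd z}$. Translating the left-hand side gives $z\lhd\libra{y}{x}$, which \eqref{LPL1} expands as $\libra{z\lhd y}{x}+\libra{y}{z\lhd x}$. Translating the right-hand side, using $x\rhd z=z\lhd x$ and $y\rhd z=z\lhd y$, yields $\libra{y}{z\lhd x}+\libra{z\lhd y}{x}$, which matches after swapping the two summands.

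The last and most delicate step is the associator relation \eqref{RPL2}, namely $x\rhd\librao{y}{z}=a_{\rhd}(x,y,z)-a_{\rhd}(x,z,y)$. Here the left-hand side becomes $\libra{z}{y}\lhd x$, which \eqref{LPL2} expands into the four terms $(y\lhd z)\lhd x-y\lhd(z\lhd x)-(z\lhd y)\lhd x+z\lhd(y\lhd x)$. On the right-hand side each nested product must be translated carefully: for instance $(x\rhd y)\rhd z=z\lhd(y\lhd x)$ while $x\rhd(y\rhd z)=(z\lhd y)\lhd x$. The main obstacle is exactly this bookkeeping, where two order-reversals interact with the nesting of the operation; once all four terms on the right are rewritten in terms of $\lhd$, they are seen to be the same four-term expression as the left-hand side, which completes the proof.
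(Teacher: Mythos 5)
Your proof is correct and follows essentially the same route as the paper's: both verify \eqref{RPL1} and \eqref{RPL2} by direct translation through the dictionary $x\lhdo y=y\lhd x$, $\librao{x}{y}=-\libra{x}{y}$ and then apply \eqref{LPL1} and \eqref{LPL2}. The only cosmetic difference is that the paper packages your term-by-term bookkeeping for the associators into the single identity $a_{\lhd}(u,v,w)=-a_{\lhdo}(w,v,u)$, which your explicit computations of $(x\rhd y)\rhd z$ and $x\rhd(y\rhd z)$ reproduce.
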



Considering the enveloping algebra of any \PL{} algebra $\h$, one can extract the properties satisfied by $\lhd$ in its enveloping algebra $\U{\h}$. Those relations give rise to the definition of right \PH{} algebras (see Foissy's work~\cite[lemma~1 and proposition~1]{siso}):
\begin{defi}[right \PH{} algebras] \label{defi:RPH}
	A \emph{right \PH{} algebra} is a 6-tuple $(H,\cdot,1,\Delta,\e,\lhd)$ such that $(H,\cdot,1,\Delta,\e)$ is a Hopf algebra and $\lhd:H\otimes H \rightarrow H$ is a coalgebra morphism satisfying the following properties for all $x,y,z\in H$:
	\begin{align}
		(x\cdot y) \lhd z&=\left( x \lhd z^{(1)}\right)\cdot \left(y \lhd z^{(2)}\right), \label{RPH1} \\
		(x\lhd y)\lhd z&=x\lhd\left( \left(y\lhd z^{(1)}\right)\cdot z^{(2)} \right). \label{RPH2}
	\end{align}
	and such that the right multiplication $\gamma_{\lhd}:H\rightarrow \Hom(H)$ defined for every $x\in H$ by:
	\[
	\gamma_{\lhd}(x):\left\lbrace \begin{array}{rcl}
		H & \rightarrow & H \\
		y & \mapsto & y\lhd x,
	\end{array}\right.
	\]
	is invertible in the convolution algebra $\Hom(H,\Hom(H))$. 
	So there exists ${\beta\in\Hom(H,\Hom(H))}$ such that for all $x\in H:$
	\[
	\gamma_{\lhd}\left(x^{(1)}\right)\circ \beta\left(x^{(2)}\right)=\beta\left(x^{(1)}\right)\circ\gamma_{\lhd}\left(x^{(2)}\right)=\e(x)\Id_H.
	\]
	When $(H,\cdot)$ is commutative, we will call $(H,\cdot,1,\Delta,\e,\lhd)$ a \emph{right Pre-Hopf algebra}.
\end{defi}
\begin{Rq}
	At first a similar notion appeared under the name of $D$-algebras~\cite{FreePL} from geometric integration, in which the invertibility condition on $\gamma$ was not required.
\end{Rq}

We also define a notion of morphism for \PH{} algebras
\begin{defi}[Post-Hopf morphism]
	A \emph{morphism} of right \PH{} algebras from $(H,\lhd)$ to $(H',\lhdp)$ is a Hopf algebra morphism $g:H\rightarrow H'$ such that for all $x,y\in H$:
	\[
	g(x\lhd y)=g(x)\lhdp g(y).
	\]
\end{defi}

As in the \PL{} case, a notion of left \PH{} algebra arises from the literature~\cite[theorem 2.7]{LPostLie} equivalent to the right one under cocommutativity.
\begin{Prop}\label{Prop:travelLR}
	Let $(H,\cdot, 1, \Delta, \e,\lhd)$ be a right \PH{} algebra such that $\Delta$ is \emph{cocommutative}. Then $(H,\cdoto, 1, \Delta, \e,\lhdo)$ is a left \PH{} algebra.
\end{Prop}
\begin{Rq}
	The requirement on $\Delta$ to be cocommutative is needed such that from the hypothesis that $\gamma_{\lhd}$ is invertible ,we can deduce that the left multiplication $\alpha_{\lhdo}: x \rightarrow (y \mapsto x \rhd y)$ has an inverse in the convolution algebra.
\end{Rq}

We also remind the useful known results in the case of right structures~\cite{LPostLie}
\begin{Lemme}\label{lem:unit}
	Let $(H,\lhd)$ be a right \PH{} algebra. Then, for all $x\in H$ we have:
	\begin{equation*}
		x\lhd 1=x \text{\normalfont{ and }}
		1 \lhd x=\e(x)1.
	\end{equation*}
\end{Lemme}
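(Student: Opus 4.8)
The plan is to prove the two identities in the order shown, since $1\rhd x=\e(x)1$ feeds into the proof of $x\rhd 1=x$. For the first identity I would work inside the convolution algebra $\Hom(H,H)$, whose unit is $u\colon x\mapsto\e(x)1$ and whose product is $(f*g)(x)=f\!\left(x^{(1)}\right)\cdot g\!\left(x^{(2)}\right)$, and set $L(x)=1\rhd x$. Specializing \eqref{RPH1} at $x=y=1$ and using $\Delta(1)=1\otimes 1$ gives
\[
1\rhd z=\left(1\rhd z^{(1)}\right)\cdot\left(1\rhd z^{(2)}\right),
\]
that is, $L*L=L$. Because $\rhd$ is a coalgebra morphism and $\Delta(1)=1\otimes 1$, the map $L$ is a coalgebra endomorphism of $H$ with $\e\circ L=\e$; consequently $S\circ L$, where $S$ is the antipode of $H$, is a two-sided convolution inverse of $L$, via the standard computation $L\!\left(z^{(1)}\right)\cdot S\!\left(L\!\left(z^{(2)}\right)\right)=\e(L(z))1=\e(z)1$. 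Convolving $L*L=L$ with this inverse collapses the relation to $L=u$, i.e. $1\rhd x=\e(x)1$.

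Next I would derive $x\rhd 1=x$ from \eqref{RPH2} specialized at $y=1$, which reads
\[
(x\rhd 1)\rhd z=x\rhd\left(\left(1\rhd z^{(1)}\right)\cdot z^{(2)}\right).
\]
The first identity rewrites the right-hand side as $x\rhd\left(\e\!\left(z^{(1)}\right)z^{(2)}\right)=x\rhd z$ by the counit axiom, so that $(x\rhd 1)\rhd z=x\rhd z$ for every $z\in H$. Finally I would invoke the convolution inverse $\beta$ of $\gamma_{\rhd}$: applying $\beta\!\left(w^{(1)}\right)\circ\gamma_{\rhd}\!\left(w^{(2)}\right)=\e(w)\Id_H$ to $x$ gives $\sum\beta\!\left(w^{(1)}\right)\!\left(x\rhd w^{(2)}\right)=\e(w)x$, and substituting $x\rhd w^{(2)}=(x\rhd 1)\rhd w^{(2)}$ in each summand turns the left-hand side into $\e(w)(x\rhd 1)$. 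Hence $\e(w)(x\rhd 1)=\e(w)x$ for all $w\in H$, and taking $w=1$ yields $x\rhd 1=x$.

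The manipulations are all routine once the convolution framework is set up; the main obstacle is the first identity, namely forcing the idempotent $L$ to equal the convolution unit. The key observation is that $L$, being a counit-preserving coalgebra endomorphism of a Hopf algebra, is automatically convolution invertible through the antipode, so idempotency instantly gives triviality. Note that this first step uses only \eqref{RPH1} and the antipode of $H$, whereas the standing hypothesis that $\gamma_{\rhd}$ is convolution invertible is genuinely needed only for the second identity.
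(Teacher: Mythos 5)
Your proof is correct and follows essentially the same route as the paper: the identity $1\rhd x=\e(x)1$ via the convolution idempotency of $L(x)=1\rhd x$ coming from \eqref{RPH1}, cancelled by the antipode-induced convolution inverse $S\circ L$ (the paper writes this same argument pointwise rather than naming the convolution structure), and then $x\rhd 1=x$ via \eqref{RPH2} at $y=1$ together with the invertibility hypothesis on $\gamma_{\rhd}$. Your use of $\beta$ at a general $w$ before specializing to $w=1$ is equivalent to the paper's direct use of $\beta(1)$ to invert the idempotent $\gamma_{\rhd}(1)$, so the two proofs coincide in substance.
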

\begin{thm}\label{thm:PrimPL}
	Let $(H,\lhd)$ be a right (respectively left) \PH{} algebra. Then, $(\Prim(H),\libra{}{},\lhd)$ is a right (respectively left) \PL{} algebra.
\end{thm}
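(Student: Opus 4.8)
The plan is to verify three things in turn: that $\rhd$ (resp. $\lhd$) preserves $\Prim(H)$, and that the two right (resp. left) \PL{} axioms hold on $\Prim(H)$, equipped with the usual commutator bracket $\libra{x}{y}=xy-yx$ for which $\Prim(H)$ is already a Lie algebra by standard Hopf algebra theory. Throughout I would use only that $\rhd$ is a coalgebra morphism, the two axioms \eqref{RPH1}--\eqref{RPH2}, and the unit relations $x\rhd 1=x$, $1\rhd x=\e(x)1$, $1\rhd 1=1$ from Lemma \ref{lem:unit}.

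First I would establish stability. For $x,y\in\Prim(H)$, since $\rhd$ is a coalgebra morphism one has $\Delta(x\rhd y)=(x^{(1)}\rhd y^{(1)})\otimes(x^{(2)}\rhd y^{(2)})$; expanding $\Delta(x)=1\otimes x+x\otimes 1$ and likewise for $y$ yields four terms. The two mixed terms each contain a factor $1\rhd y=\e(y)1$, which vanishes because $\e(y)=0$, while the remaining two collapse via $1\rhd 1=1$ and $x\rhd 1=x$ to $1\otimes(x\rhd y)+(x\rhd y)\otimes 1$. Hence $x\rhd y\in\Prim(H)$, so $\rhd$ restricts to $\Prim(H)^{\otimes 2}\to\Prim(H)$.

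Next, axiom \eqref{RPL1}. For $x,y,z\in\Prim(H)$ I would expand $\libra{x}{y}\rhd z=(xy-yx)\rhd z$ using \eqref{RPH1}, and use that $z$ is primitive, so $z^{(1)}\otimes z^{(2)}=1\otimes z+z\otimes 1$; simplifying each $u\rhd 1=u$ gives $(xy)\rhd z=x(y\rhd z)+(x\rhd z)y$ and symmetrically for $yx$, and a direct comparison matches the difference with $\libra{x\rhd z}{y}+\libra{x}{y\rhd z}$. Axiom \eqref{RPL2} is the crux and, pleasantly, it also reduces to a one-line identity: applying \eqref{RPH2} with $z$ primitive gives $(x\rhd y)\rhd z=x\rhd\!\big((y\rhd z)+yz\big)$, so by linearity of $\rhd$ in its second slot the associator collapses to $a_{\rhd}(x,y,z)=(x\rhd y)\rhd z-x\rhd(y\rhd z)=x\rhd(yz)$. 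Consequently $a_{\rhd}(x,y,z)-a_{\rhd}(x,z,y)=x\rhd(yz-zy)=x\rhd\libra{y}{z}$, which is exactly \eqref{RPL2}.

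Finally, the left case is the mirror computation. Stability and \eqref{LPL1} use \eqref{LPH1} together with the left analogue of Lemma \ref{lem:unit} ($1\lhd x=x$, $x\lhd 1=\e(x)1$), now applying primitivity to the \emph{first} argument; and \eqref{LPL2} follows because \eqref{LPH2} with $x$ primitive makes the associator collapse to $a_{\lhd}(x,y,z)=-(xy)\lhd z$, so that $a_{\lhd}(y,x,z)-a_{\lhd}(x,y,z)=(xy-yx)\lhd z=\libra{x}{y}\lhd z$. I expect no genuine obstacle here: the only thing to watch is the bookkeeping of the Sweedler terms and correctly tracking which argument the primitivity hypothesis is spent on, but once Lemma \ref{lem:unit} is available and one notices the associator reductions $a_{\rhd}(x,y,z)=x\rhd(yz)$ and $a_{\lhd}(x,y,z)=-(xy)\lhd z$, every axiom falls out immediately.
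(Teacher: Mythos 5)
Your proof is correct and follows essentially the same route as the paper's: stability of $\Prim(H)$ under $\rhd$ via the coalgebra-morphism property and Lemma~\ref{lem:unit}, then specialization of \eqref{RPH1} and \eqref{RPH2} to primitive elements, where the key collapse $a_{\rhd}(x,y,z)=x\rhd(y\cdot z)$ immediately yields both \PL{} axioms. Your only addition is spelling out the left case, which the paper dismisses as analogous, and your mirror identities there ($1\lhd x=x$, $x\lhd 1=\e(x)1$, and $a_{\lhd}(x,y,z)=-(x\cdot y)\lhd z$, with primitivity now spent on the first argument) are all correct.
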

\begin{proof} 
	We refer to the proof of Li, Sheng and Tang~\cite[therorem~2.7]{LPostLie}
\end{proof}

\subsection{An important construction}

In the literature~\cite{siso,FreePL,EnvPost,LPostLie}, an important construction over enveloping algebras arose as a generalization in the \PL{} case of Guin-Oudom procedure~\cite{OudomGuin} in the Pre-Lie case.  In this paper, we will focus on the right extension of the product~\cite{siso}. For completeness, we remind them below:

\begin{Prop}\label{Prop:extendTg}
	Let $V$ be a vector space and $\lhd$ be a magmatic product on $V$. Then, $*$ can be uniquely extended to a map $\lhd:T(V)^{\otimes 2}\rightarrow T(V)$ such that for all $f,g,h\in T(V)$ and $x,y\in V$:	
	\begin{multicols}{2}
		\begin{itemize}
		\item $\e(f\lhd g)=\e(f)\e(g)$;
		\item $\Delta(f\lhd g)=\Delta(f)\lhd \Delta(g)$;
		\item $f\lhd 1=f$;
		\item $1\lhd f=\e(f)1$;
		\item $f\lhd (gy)=(f\lhd g)\lhd y-f\lhd (g\lhd y)$;
		\item $(fg)\lhd h=\sum \left(f\lhd h^{(1)}\right)\left(g\lhd h^{(2)}\right)$;
		\item $(f\lhd g)\lhd h=\sum f\lhd \left(\left(g\lhd h^{(1)}\right)h^{(2)}\right)$.
	\end{itemize}
	\end{multicols}
 \end{Prop}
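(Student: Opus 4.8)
The plan is to handle existence and uniqueness through one and the same recursion on the length of the right-hand factor $g$, the key realisation being that the seven conditions split into those that \emph{force} the definition and those that must then be \emph{checked} to be consistent with it.

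First I would extract what the axioms impose, which yields uniqueness. Since the extension must restrict to the given magmatic product on $V$, the single-letter operators $R_y:=(-)*y$, for $y\in V$, are prescribed on the generators. Every $y\in V$ is primitive, so $\Delta(y)=y\otimes 1+1\otimes y$, and the sixth identity with $h=y$ becomes $(fg)*y=(f*y)g+f(g*y)$; that is, each $R_y$ must be a derivation of the concatenation algebra $T(V)$. A derivation is determined by its values on generators, so $f*y$ is forced for all $f\in T(V)$ and $y\in V$, namely $R_y(x_1\cdots x_n)=\sum_i x_1\cdots(x_i*y)\cdots x_n$. The fifth identity, read as $f*(gy)=(f*g)*y-f*(g*y)$, then determines $f*g$ for every $g$ by induction on the length of $g$: its right-hand side only involves the already-fixed single-letter operators together with products whose right factor is strictly shorter than $gy$, since $g*y=R_y(g)$ has the same length as $g$. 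With $f*1=f$ this gives uniqueness.

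For existence I would reverse this and \emph{define} $*$ by the same recursion: set $f*1=f$; let $R_y$ be the unique derivation of $T(V)$ extending $x\mapsto x*y$ on $V$; and put $f*(gy):=R_y(f*g)-f*R_y(g)$ on monomials, extended bilinearly. Since a nonempty monomial has a unique last letter the recursion is unambiguous and well founded, and the restriction of $*$ to $V$ is the magmatic product by construction. It then remains to verify the seven conditions. The counit identity $\e(f*g)=\e(f)\e(g)$ and the unit identities $f*1=f$, $1*f=\e(f)1$ follow by short inductions on lengths, using $\e(y)=0$ and that $R_y$ preserves length. The sixth identity in full generality is obtained by induction on the length of $h$, peeling off its last letter and invoking the derivation property of the $R_y$ together with the multiplicativity of $\Delta$.

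The delicate part, which I expect to be the main obstacle, is the coalgebra-morphism identity $\Delta(f*g)=(f^{(1)}*g^{(1)})\otimes(f^{(2)}*g^{(2)})$ and the nested relation $(f*g)*h=\sum f*((g*h^{(1)})h^{(2)})$. Both I would prove by induction on the length of the right argument via the fifth identity. The coproduct identity is seeded by the observation that each $R_y$ is simultaneously a \emph{coderivation}: on generators $x$ one has $\Delta(x*y)=(x*y)\otimes 1+1\otimes(x*y)$ because $x*y\in V$ is primitive, and a derivation agreeing with a coderivation on generators is itself a coderivation by the standard $\Delta$-derivation argument, so $\Delta R_y=(R_y\otimes\Id+\Id\otimes R_y)\Delta$; the inductive step then combines this with coassociativity. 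The seventh identity is the most demanding and must be proved last: after substituting $h=h'z$ and expanding both sides with the fifth and sixth identities, the two sides agree only once the coalgebra-morphism property and the already-established identities are available as induction hypotheses.
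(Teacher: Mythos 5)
Your plan is correct, but there is nothing in the paper to compare it against: Proposition \ref{Prop:extendTg} is stated there explicitly as ``a summary of lemma $1$ and proposition $1$ of \cite{siso}'' and carries no internal proof, so the statement is imported, not proved. Judged on its own merits, your argument is the standard recursive-extension construction from that literature (in the style of Oudom--Guin), and it is sound: the sixth axiom with the primitive element $h=y\in V$, together with $f*1=f$ and $1*y=\e(y)1=0$, forces each operator $R_y=(-)*y$ to be the derivation of the concatenation algebra extending $x\mapsto x*y$; the fifth axiom then determines $f*g$ by induction on the length of $g$, well founded precisely because $R_y$ preserves the length grading, which gives uniqueness, and reversing the recursion gives existence. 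Two refinements are worth making. First, for well-definedness your ``unique last letter of a monomial'' phrasing is better replaced by the basis-free observation that $(g,y)\mapsto R_y(f*g)-f*R_y(g)$ is bilinear and hence factors through $V^{\otimes n}\otimes V=V^{\otimes(n+1)}$. Second, the coderivation identity $\Delta\circ R_y=\left(R_y\otimes\Id+\Id\otimes R_y\right)\circ\Delta$ --- which you correctly derive by the two-derivations-along-$\Delta$-agreeing-on-generators argument --- is needed not only for the coalgebra-morphism axiom but already in the inductive step of the \emph{sixth} axiom, since expanding $(fg)*R_z(h')$ by the induction hypothesis requires knowing $\Delta(R_z(h'))$; it should therefore be isolated as a preliminary lemma rather than introduced midway. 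Conversely, for the seventh axiom the full coalgebra-morphism property of $*$ is not actually needed: the computation for $h=h'z$ closes using only the defining recursion in the form $R_z(f*w)=f*(wz)+f*R_z(w)$, the derivation property of $R_z$, and the coderivation lemma. Neither point is a gap; each of the seven identities closes under the inductions exactly as you describe.
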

\begin{Egs}
	Let $V$ be any vector space and $v_1,v_2,v_3,v_4$ be four elements of $V$. Then:
	\begin{align*}
		v_1\lhd v_2&=v_1\lhd v_2, \\
		(v_1v_2)\lhd v_3&=(v_1\lhd  v_3)v_2 +v_1(v_2\lhd  v_3), \\
		v_1\lhd (v_2v_3)&=(v_1\lhd  v_2)\lhd  v_3-v_1\lhd  (v_2\lhd  v_3), \\
		(v_1v_2v_3)\lhd v_4&=(v_1\lhd  v_4)v_2v_3+v_1(v_2\lhd  v_4)+v_1v_2(v_3\lhd  v_4), \\
		v_1\lhd (v_2v_3v_4)&=((v_1\lhd  v_2)\lhd  v_3)\lhd  v_4-(v_1\lhd  (v_2\lhd  v_3))\lhd  v_4-(v_1\lhd  (v_2\lhd  v_4))\lhd  v_3 \\&+v_1\lhd  ((v_2\lhd  v_4)\lhd  v_3) -(v_1\lhd  v_2)\lhd  (v_3\lhd  v_4)+v_1\lhd  (v_2\lhd  (v_3\lhd  v_4)).
	\end{align*}
\end{Egs}
Moreover in the \PL{} case, all those properties remain in the quotient space $\U{\h}$ when $\h$ is a right \PL{} algebra~\cite[proposition~4]{siso}.
\begin{thm}\label{thm:extendUg}
	Let $(\h,\lhd )$ be a right \PL{} algebra. Its magmatic product can be uniquely extended to $\mathcal{U}(\h)$ such that for all $f,g,h\in\mathcal{U}(\h),y\in\h$:
	\begin{multicols}{2}
		\begin{itemize}
		\item $\e(f\lhd g)=\e(f)\e(g)$;
		\item $\Delta(f\lhd g)=\Delta(f)\lhd \Delta(g)$;
		\item $f\lhd 1=f$;
		\item $1\lhd f=\e(f)1$;
		\item $f\lhd (gy)=(f\lhd g)\lhd y-f\lhd (g\lhd y)$;
		\item $(fg)\lhd h=\sum \left(f\lhd h^{(1)}\right)\left(g\lhd h^{(2)}\right)$;
		\item $(f\lhd g)\lhd h=\sum f\lhd \left(\left(g\lhd h^{(1)}\right)h^{(2)}\right)$.
	\end{itemize}
	\end{multicols}
\end{thm}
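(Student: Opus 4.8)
The plan is to transport the extension already constructed on the tensor algebra down to the quotient $\U{\h}=\faktor{T(\h)}{I}$. Applying Proposition~\ref{Prop:extendTg} to the vector space $V=\h$ equipped with its magmatic product $*=\rhd$ produces a unique bilinear map $*\colon T(\h)\otimes T(\h)\to T(\h)$ satisfying the seven listed relations. Since the projection $\pi\colon T(\h)\to\U{\h}$ is simultaneously an algebra and a coalgebra morphism and $I$ is a Hopf ideal, the seven properties descend verbatim to $\U{\h}$ as soon as $*$ itself descends; and uniqueness is inherited from Proposition~\ref{Prop:extendTg}, since the relations $f*1=f$, $1*f=\e(f)1$, $f*(gy)=(f*g)*y-f*(g*y)$ and $(fg)*h=\sum(f*h^{(1)})(g*h^{(2)})$ recursively pin down every value of $*$ from the products $x*y$ with $x,y\in\h$, which are fixed. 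Hence the whole statement reduces to one point: $*$ must be well defined modulo $I$, i.e.
\[
I*T(\h)+T(\h)*I\subseteq I.
\]

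Writing $g_{x,y}\coloneqq xy-yx-\libra{x}{y}\in I$ for the generators of $I$, the heart of the argument is two short computations on these generators, each of which is exactly one defining axiom of a right \PL{} algebra. First, using $(fg)*h=\sum(f*h^{(1)})(g*h^{(2)})$ with $z\in\h$ primitive one finds
\[
g_{x,y}*z=g_{x,\,y*z}+g_{x*z,\,y}+\bigl(\libra{x}{y*z}+\libra{x*z}{y}-\libra{x}{y}*z\bigr),
\]
and the bracketed single-letter term vanishes precisely by relation~\eqref{RPL1}, so $g_{x,y}*z\in I$. Second, since $x*(yz)=a_{*}(x,y,z)$ by $f*(gy)=(f*g)*y-f*(g*y)$, one computes for $x\in\h$
\[
x*g_{y,z}=a_{*}(x,y,z)-a_{*}(x,z,y)-x\rhd\libra{y}{z},
\]
which is $0$ precisely by relation~\eqref{RPL2}. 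Thus on generators $I*\h\subseteq I$ and $\h*I=0$.

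From these two base cases I would propagate to all of $T(\h)$. For $I*T(\h)\subseteq I$ the propagation is clean: the generator identity together with the fact that $I$ is a concatenation ideal gives $u*z\in I$ for every $u\in I$ and $z\in\h$ (expand $u=\sum a_i g_{x_i,y_i}b_i$ and distribute $*z$ via the $(fg)*h$ rule), and then an induction on the length of $h$, using $u*(h'y)=(u*h')*y-u*(h'*y)$ together with the fact that $*$ preserves the word-length of its left argument (so $h'*y$ is no longer than $h'$), yields $u*h\in I$ for all $h$. Extending the left argument of $\h*I=0$ is equally painless: since $g_{y,z}$ is primitive in $T(\h)$, the identity $(f'x)*g_{y,z}=f'(x*g_{y,z})+(f'*g_{y,z})x$ and induction on the length of $f$ give $f*g_{y,z}\in I$ for every $f$.

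The genuinely delicate step — the one I expect to be the main obstacle — is completing $T(\h)*I\subseteq I$ for a \emph{general} element of $I$ rather than a single generator. The asymmetry is structural: the available relations strip letters off the right of the right argument (via $f*(gy)=(f*g)*y-f*(g*y)$) and off the left of the left argument (via $(fg)*h$), but nothing strips a letter off the \emph{front} of the right argument, whereas elements of the two-sided ideal $I$ have the shape $a\,g_{y,z}\,b$ with $a$ possibly nontrivial at that front. I would resolve this either by invoking the standard fact that, for an enveloping algebra, the two-sided ideal $I$ is already generated as a one-sided ideal by the $g_{y,z}$ — reducing every element of $I$ to combinations of $g_{y,z}\,b$ — or by a nested induction on the word-length of the right argument in which the fortunate identity $g_{x,y}*z=g_{x,\,y*z}+g_{x*z,\,y}$ (again a sum of \emph{generators}) keeps the recursion inside the span of the $g$'s. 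Care is required throughout because $I$ is not homogeneous for the word-length grading (the generator $g_{x,y}$ has a length-two and a length-one component), so the inductions must be run on the length filtration rather than on a single graded piece. Once both inclusions are secured, $*$ descends to $\U{\h}$, the seven relations pass to the quotient through $\pi$, and uniqueness is inherited, which completes the proof.
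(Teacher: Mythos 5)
You should first be aware that the paper itself contains no proof of this statement: it imports the theorem verbatim as Proposition~4 of \cite{siso}, so your attempt can only be measured against the argument it would take, not against an internal proof. Your skeleton is the right one: extend $*$ to $T(\h)$ by Proposition~\ref{Prop:extendTg}, note that uniqueness is forced because the relations recursively determine $*$ from its values on $\h\otimes\h$, and reduce everything to the well-definedness $I*T(\h)+T(\h)*I\subseteq I$. Your two generator computations are correct: $g_{x,y}*z=g_{x,\,y*z}+g_{x*z,\,y}$ once \eqref{RPL1} kills the single-letter term, and $x*g_{y,z}=a_{*}(x,y,z)-a_{*}(x,z,y)-x*\libra{y}{z}=0$ by \eqref{RPL2}. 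Your propagation of $I*T(\h)\subseteq I$ (distribute $*z$ over a sandwich $a\,g_{x,y}\,b$, then induct on the length of the right factor, using that $*$ preserves the word-length of its left argument so the induction is well-founded on the length filtration) is sound, as is $f*g_{y,z}\in I$; in fact $(f'x)*g_{y,z}=(f'*g_{y,z})x$ together with $1*g_{y,z}=\e(g_{y,z})1=0$ gives the stronger statement $T(\h)*g_{y,z}=(0)$.

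The gap sits exactly where you predicted, and your primary fix is false: the two-sided ideal $I$ is \emph{not} generated as a one-sided ideal by the $g_{y,z}$. Take $\h$ abelian with $\dim\h=2$, so that $I=\ker\left(T(\h)\rightarrow S(\h)\right)$; its degree-$3$ component has dimension $8-4=4$, while the degree-$3$ component of the right ideal generated by the $g_{u,v}$ is spanned by $g_{x,y}x$ and $g_{x,y}y$ alone, of dimension $2$ (the left ideal fails symmetrically, e.g.\ $x\,g_{x,y}$ lies in neither). Your fallback is also mis-aimed: the identity $g_{x,y}*z=g_{x,\,y*z}+g_{x*z,\,y}$ controls generators on the \emph{left} of $*$ and contributes nothing to $T(\h)*I\subseteq I$. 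What actually closes the argument is that $g_{y,z}$ is a \emph{primitive} of $T(\h)$, so the letter-stripping rule extends to it: expanding $f*(a\,yz)$, $f*(a\,zy)$ and $f*\left(a\libra{y}{z}\right)$ via the defining relations, and recombining with $(aw)*u=(a*u)w+a(w*u)$ for $u$ primitive and $a*(uv)=(a*u)*v-a*(u*v)$, all cross terms cancel and one obtains
\[
f*\left(a\,g_{y,z}\right)=(f*a)*g_{y,z}-f*\left(a*g_{y,z}\right),
\]
whose two terms vanish because $T(\h)*g_{y,z}=(0)$. Trailing letters of $b$ in $a\,g_{y,z}\,b$ are then removed one at a time by $f*(hy)=(f*h)*y-f*(h*y)$ together with the already-proved $I*T(\h)\subseteq I$, inducting on the length filtration as you yourself insist. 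With this substitute for your ``either/or'' step the proof is complete; as written, however, the decisive inclusion $T(\h)*I\subseteq I$ rests on a false fact in one branch and an unexecuted, wrongly-targeted sketch in the other.
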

\section{\CQMM{} theorem for \PH{} algebras}

In this section we prove the classical Cartier-Quillen-Milnor-Moore (CQMM) theorem~\cite{CartierPatras} in the context of \PH{} and \PL{} algebras. For the sake of simplicity, we will restrict ourselves to the right cases. Indeed, they are analogous under the hypothesis of cocommutativity thanks to proposition~\ref{Prop:travelLR}. 

\subsection{Preliminary lemmas and definitions}

We follow a standard sketch of proof adapted to the case of \PH{} algebras.
\begin{defi}[\cite{Montgomery,Sweedler}~coradical filtration]
	Let $(C,\Delta,\e)$ be a \emph{connected coalgebra}, this means for any $x\in C$ there exists $n_x\in\N$ such that $\tilde{\Delta}^{(n_x)}(x)=0.$
	We define the \emph{coradical filtration} of $C$ as the increasing sequence $\left(C^{\leq n}\right)_{n\in\N}$ of subspaces of $C$ defined for all $n\in\N$ by:
	\begin{align*}
		C^{\leq n}\coloneqq\left\lbrace x\in C \,\middle|\, \tilde{\Delta}^{(n)}(x)=0  \right\rbrace
		=\left\lbrace x\in C \,\middle|\, \exists m\leq n,\tilde{\Delta}^{(m)}(x)=0  \right\rbrace.
	\end{align*}
	Given $x\in C,$ we define $\deg(x)= \min \left\{n\in\N \,\middle|\, x\in C^{\leq n}\right\}.$
\end{defi}

Using the coassociativity of $\Delta$, we have this easy lemma:
\begin{Lemme}\label{lem:coalgdecomp}
	Let $(C,\Delta,\e)$ be a connected coalgebra. Then, for all $n\in\N$:
	\begin{equation*} 
		C=C^{\leq 0}\oplus \ker(\e) \text{\normalfont{ and }}  \Delta\left(C^{\leq n}\right)\subseteq \sum_{k=0}^n C^{\leq k}\otimes C^{\leq n-k}.
	\end{equation*}
\end{Lemme}
\begin{Rq}\label{Rq:filtracompat}
	Let $H$ be a bialgebra, connected as a coalgebra, and consider $\left(H^{\leq n}\right)_{n\in \N}$ its coradical filtration. Then, for all $m,n\in\N$:
	\begin{align}\label{eq:connected_coalgebra}
		H^{\leq 0}=(1), &&m\left(H^{\leq n}\otimes H^{\leq m}\right)\subseteq H^{\leq n+m} \text{ and } && \Delta\left(H^{\leq n}\right)\subseteq \sum_{m=0}^n H^{\leq n-m}\otimes H^{\leq m}. 
	\end{align}
\end{Rq}

\begin{Prop}[Filtration over \U{\h}]
	 We put $\U{\h}^{\leq 0}:=\langle 1 \rangle.$
	For any $n\in\N\setminus\{0\},$ we define:
	\[
	\U{\h}^{\leq n}=\bigl\langle \left\{x_1\dots x_k \,\middle|\, k\leq n, \forall 
	i\in\IEM{1}{k},x_i\in \h \right\}\bigr\rangle. 
	\] 
	Then, $\left(\U{\h}^{\leq n}\right)_{n\in\N}$ is the coradical filtration of $\U{\h}$ satisfying equation~\eqref{eq:connected_coalgebra}.
\end{Prop}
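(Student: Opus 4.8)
The plan is to establish the two inclusions between the proposed filtration $\left(\U{\h}^{\leq n}\right)_{n\in\N}$ and the coradical filtration $\left(C^{\leq n}\right)_{n\in\N}$ of $\U{\h}$, and then to read off the three compatibility properties directly from the length description. Throughout I rely on the two elementary facts that drive everything: every element of $\h$ is primitive in $\U{\h}$, so $\tilde{\Delta}(y)=0$ for $y\in\h$, and $\U{\h}$ is generated as an algebra by $\h$.

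For the inclusion $\U{\h}^{\leq n}\subseteq C^{\leq n}$, I would compute $\tilde{\Delta}^{(n)}$ on a product $x_1\cdots x_k$ of at most $n$ primitives. Since $\Delta$ is an algebra morphism and each $x_i$ is primitive, expanding $\Delta(x_1\cdots x_k)=\prod_i(x_i\otimes 1+1\otimes x_i)$ and iterating shows that $\tilde{\Delta}^{(n)}(x_1\cdots x_k)$ is the sum indexed by the ordered set partitions of $\IEM{1}{k}$ into exactly $n+1$ nonempty blocks, each tensor slot carrying the product of its block taken in increasing index order. When $k\leq n$ there are no such partitions, so $\tilde{\Delta}^{(n)}(x_1\cdots x_k)=0$ and hence $\U{\h}^{\leq n}\subseteq C^{\leq n}$. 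In particular, since $\U{\h}=\bigcup_n\U{\h}^{\leq n}$, this already proves that $\U{\h}$ is connected as a coalgebra, so that the coradical filtration is defined.

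The harder direction is $C^{\leq n}\subseteq\U{\h}^{\leq n}$, and this is where I expect the main obstacle, since it is the one step that genuinely needs both the \PBW{} theorem and the hypothesis $\car(\K)=0$. The length filtration $\left(\U{\h}^{\leq n}\right)_n$ has associated graded isomorphic, as a graded Hopf algebra, to the symmetric algebra $S(\h)$ with $\h$ placed in degree $1$ and primitive. In $S(\h)$ the iterated reduced coproduct is injective on each homogeneous component when $\car(\K)=0$: the map $\tilde{\Delta}^{(m-1)}\colon S(\h)_m\to\h^{\otimes m}$ is the symmetrization map, which is injective in characteristic zero, and by coassociativity $\tilde{\Delta}^{(n)}(u)=0$ forces $\tilde{\Delta}^{(m-1)}(u)=0$; hence $\tilde{\Delta}^{(n)}$ is injective on $S(\h)_m$ for every $n<m$. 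Now take $x\in C^{\leq n}$ and suppose $\len(x)=m>n$; then the image $\bar{x}$ of $x$ in $\U{\h}^{\leq m}/\U{\h}^{\leq m-1}\cong S(\h)_m$ is nonzero. As $\tilde{\Delta}^{(n)}$ respects the length filtration, the component of $\tilde{\Delta}^{(n)}(x)$ of top total length $m$ equals, under the identification of the associated graded of $\left(\U{\h}\right)^{\otimes(n+1)}$ with $S(\h)^{\otimes(n+1)}$, precisely $\tilde{\Delta}^{(n)}(\bar{x})$, which is nonzero by injectivity. This contradicts $\tilde{\Delta}^{(n)}(x)=0$, so $\len(x)\leq n$ and $x\in\U{\h}^{\leq n}$, giving the reverse inclusion and hence equality of the two filtrations.

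Finally I would verify the three points of Remark~\ref{Rq:filtracompat} directly from the length description. Compatibility with the product, $m\left(\U{\h}^{\leq n}\otimes\U{\h}^{\leq p}\right)\subseteq\U{\h}^{\leq n+p}$, is immediate because concatenating a product of at most $n$ generators with one of at most $p$ generators yields a product of at most $n+p$ generators. For the coproduct, the expansion $\Delta(x_1\cdots x_k)=\sum_{S\subseteq\IEM{1}{k}}\left(\prod_{i\in S}x_i\right)\otimes\left(\prod_{j\notin S}x_j\right)$ places the term indexed by $S$ in $\U{\h}^{\leq |S|}\otimes\U{\h}^{\leq k-|S|}$ with $|S|+(k-|S|)=k\leq n$, and taking $m=k-|S|$ gives $\Delta\left(\U{\h}^{\leq n}\right)\subseteq\sum_{m=0}^{n}\U{\h}^{\leq n-m}\otimes\U{\h}^{\leq m}$. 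The last point $\U{\h}^{\leq 0}=(1)$ holds by definition and matches $C^{\leq 0}=\{x\mid\rho(x)=0\}=\K 1$, which also confirms the two filtrations agree in degree $0$.
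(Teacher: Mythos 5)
Your proof is correct, and there is nothing in the paper to compare it against: the proposition is stated without proof, as a classical fact about enveloping algebras in characteristic zero, so your argument fills a gap the author left to the literature rather than diverging from an argument in the text. Your route is the standard one. The inclusion $\U{\h}^{\leq n}\subseteq C^{\leq n}$ via the ordered-set-partition expansion of $\tilde{\Delta}^{(n)}$ on products of primitives is exactly the computation underlying the paper's lemma \ref{lem:CQQM1}, and you rightly observe that it also yields connectedness of $\U{\h}$ as a coalgebra, without which the paper's definition of the coradical filtration would not even apply — a point the paper never makes explicit. For the reverse inclusion you correctly identify the two genuinely nontrivial inputs: the \PBW{} theorem, giving $\mathrm{gr}\,\U{\h}\cong S(\h)$ for the length filtration, and $\car(\K)=0$, giving injectivity of the symmetrization $S^m(\h)\rightarrow\h^{\otimes m}$ (the hypothesis is genuinely necessary: in characteristic $p$, with $\h$ one-dimensional abelian, $x^p$ is primitive in $\K[x]$, so it lies in $C^{\leq 1}$ but has length $p$). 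Two small points deserve a line each if this were written out in full: the \PBW{} identification must be as graded \emph{bialgebras}, which holds because both coproducts are algebra morphisms agreeing on the degree-one generators $\h$; and the implication ``$\tilde{\Delta}^{(n)}(u)=0$ forces $\tilde{\Delta}^{(m-1)}(u)=0$'' is really by the recursive definition $\tilde{\Delta}^{(k)}=\bigl(\tilde{\Delta}\otimes\Id^{\otimes k-1}\bigr)\circ\tilde{\Delta}^{(k-1)}$ rather than by ``coassociativity'' per se. Likewise the claim that the top-length component of $\tilde{\Delta}^{(n)}(x)$ computes $\tilde{\Delta}^{(n)}(\bar{x})$ uses functoriality of $\mathrm{gr}$ for filtered maps together with $\mathrm{gr}\bigl(\tilde{\Delta}\bigr)=\tilde{\Delta}_{S(\h)}$; these are routine and do not constitute gaps. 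The final verification of the three properties of remark \ref{Rq:filtracompat} from the length description is immediate, as you say.
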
 
 Using the construction of the antipode in graded bialgebras, we improve proposition~\ref{Prop:travelLR}
\begin{Prop}\label{Prop:graded=PH}
	Let $H$ be a bialgebra connected as a coalgebra, endowed with a coalgebra morphism $\lhd:H^{\otimes 2}\rightarrow H$ such that \eqref{RPH1} and \eqref{RPH2} hold. Then, $(H,\lhd)$ is a \PH{} algebra.
\end{Prop}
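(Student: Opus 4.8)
The plan is to verify the two clauses of the definition of a right \PH{} algebra that are not already among the hypotheses: that $H$ carries an antipode (so that $(H,\cdot,1,\Delta,\e)$ is genuinely a Hopf algebra), and that the right multiplication $\gamma_{\rhd}$ is invertible in the convolution algebra $\Hom(H,\Hom(H))$. The first point is classical: a bialgebra that is connected as a coalgebra automatically admits an antipode, built by the usual recursion along the coradical filtration. For the second point I would invoke the standard fact that, over a connected coalgebra, a linear map $f$ into a unital algebra $A$ is invertible for the convolution product \ssi{} $f(1)$ is invertible in $A$; and when moreover $f(1)=1_A$, the inverse is the pointwise-finite series $\sum_{k\geq 0}(-1)^k\,(f-u_A\e)^{*k}$, which terminates on each element because $f-u_A\e$ strictly lowers the coradical degree (by Lemma~\ref{lem:coalgdecomp}). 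Everything thus reduces to showing that the operator $R\colon x\mapsto x\rhd 1$, which is exactly $\gamma_{\rhd}(1)\in\Hom(H)$, equals $\Id_H$.

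First I would record the identity $1\rhd x=\e(x)1$, proved verbatim as in Lemma~\ref{lem:unit}: one uses that $\rhd$ is a coalgebra morphism, that the antipode $S$ inverts $\Id$ for convolution, and \eqref{RPH1} with both left arguments equal to $1$. Crucially this argument never invokes the invertibility of $\gamma_{\rhd}$, so it is legitimately available here. Next I would analyse $R=\gamma_{\rhd}(1)$. Putting $z=1$ in \eqref{RPH1} gives $R(x\cdot y)=R(x)R(y)$; and since $\rhd$ is a coalgebra morphism and $\Delta(1)=1\otimes1$, one gets $\Delta\circ R=(R\otimes R)\circ\Delta$ and $\e\circ R=\e$. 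Hence $R$ is a bialgebra endomorphism of $H$ fixing $H^{\leq 0}=\K 1$. Putting $y=1$ in \eqref{RPH2} and using $1\rhd x=\e(x)1$ yields $(x\rhd 1)\rhd z=x\rhd z$, that is $R(x)\rhd z=x\rhd z$ for all $x,z$; specialising to $z=1$ gives $R^2=R$, so $R$ is in addition idempotent.

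It remains to upgrade ``$R$ idempotent, bialgebra endomorphism, identity on $\K1$'' to ``$R=\Id_H$'', which I would attempt by induction on the coradical filtration $\left(H^{\leq n}\right)_{n\in\N}$. Assuming $R=\Id$ on $H^{\leq n}$, take $x\in H^{\leq n+1}$. Since $\tilde\Delta\left(H^{\leq n+1}\right)\subseteq H^{\leq n}\otimes H^{\leq n}$ and $\tilde\Delta\circ R=(R\otimes R)\circ\tilde\Delta$, the inductive hypothesis forces $\tilde\Delta\left(R(x)-x\right)=0$; combined with $\e\left(R(x)-x\right)=0$ this shows $R(x)-x$ is primitive, and then idempotency gives $R\left(R(x)-x\right)=0$. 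So the induction step collapses onto the primitive layer.

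This last point is where I expect the real difficulty to sit: the whole statement funnels into showing that $R$ is the identity on $\Prim(H)$, equivalently that a primitive $p$ with $p\rhd 1=0$ must vanish. The levers I would try are idempotency together with the relation $R(x)\rhd z=x\rhd z$ (which says $\ker R=\{x:\ x\rhd z=0\ \text{for all }z\}$), so the task becomes showing that no nonzero primitive is annihilated by every right multiplication $\gamma_{\rhd}(z)$. Once $R=\Id_H$ is secured, the reduction of the first paragraph produces the convolution inverse of $\gamma_{\rhd}$ explicitly, and $(H,\rhd)$ then satisfies every axiom of a right \PH{} algebra, which finishes the argument.
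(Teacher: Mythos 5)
Your reduction is carried out correctly as far as it goes: $1\rhd x=\e(x)1$ indeed follows without invoking invertibility (the first half of the proof of lemma \ref{lem:unit} only uses the antipode, \eqref{RPH1} and the coalgebra-morphism property); $R=\gamma_{\rhd}(1)$ is a bialgebra endomorphism fixing $\K 1$; setting $y=1$ in \eqref{RPH2} gives $R(x)\rhd z=x\rhd z$, hence $R^2=R$; and your induction along the coradical filtration correctly funnels the whole statement into the primitive layer. But the proof stops exactly there: you never establish that a primitive $p$ with $p\rhd 1=0$ must vanish, and you say so yourself ("levers I would try"). This is a genuine gap, and in fact it cannot be closed from the stated hypotheses. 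Take any connected bialgebra, say $H=\K[x]$ with $x$ primitive, and define $a\rhd b\coloneqq\e(a)\e(b)1$. This $\rhd$ is a coalgebra morphism, and \eqref{RPH1} and \eqref{RPH2} both hold (each side of each identity equals $\e(a)\e(b)\e(c)\,1$), yet $\gamma_{\rhd}(1)$ is the rank-one map $y\mapsto\e(y)1$, so no $\beta\in\Hom(H,\Hom(H))$ can satisfy $\gamma_{\rhd}\left(1\right)\circ\beta\left(1\right)=\Id_H$, and $(H,\rhd)$ is not a \PH{} algebra. So the step you isolated — $R=\Id$ on $\Prim(H)$ — genuinely does not follow, and your instinct that "the real difficulty sits there" is right in the strongest sense.

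It is worth noting that you have thereby located a weakness in the paper's own argument. The paper proves the proposition by constructing the left and right convolution inverses $\beta,\alpha$ of $\gamma_{\rhd}$ recursively along the coradical filtration — essentially a recursive form of your geometric series — and its initialization cites lemma \ref{lem:unit} to assert $\gamma_{\rhd}(1)=\Id_H$. But the half of lemma \ref{lem:unit} giving $x\rhd 1=x$ is proved there \emph{using} the convolution invertibility of $\gamma_{\rhd}$, i.e.\ the very axiom proposition \ref{Prop:graded=PH} sets out to establish; only $1\rhd x=\e(x)1$ is legitimately available, exactly as you observed. The statement should carry the extra hypothesis $x\rhd 1=x$ (equivalently, by your idempotency remark, that $\gamma_{\rhd}(1)$ is invertible in $\Hom(H)$) — a hypothesis satisfied in every application in the paper, since the extensions of theorem \ref{thm:extendUg} and proposition \ref{Prop:extendTg} impose $f*1=f$. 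With that hypothesis added, your series argument and the paper's recursion both complete immediately; without it, the proposition as stated is false.
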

\begin{proof}
	The Hopf algebra $H$ is connected as a coalgebra. So $H^{\leq 0}$ is one dimensional.
	One only needs to show that the right multiplication $\gamma_{\lhd}$ is invertible in the convolution algebra $\bigl(\Hom(H,\Hom(H)),*\bigr)$ to end the proof. The remaining properties are easy~\cite{LPostLie}. We build its left inverse $\beta$ and its right inverse $\alpha$ by induction over the degree of the coradical filtration $n$.
	\begin{description}
		\item[Initialization:] we put $\beta(1)=\Id_H=\alpha(1)$. So $\beta*\gamma_{\lhd}(1)=\beta(1)\circ \gamma_{\lhd}(1).$
		By lemma~\ref{lem:unit}, $\gamma_{\lhd}(1)=\Id_H$ as $H^{\leq 0}=\langle1\rangle$. therefore $\beta*\gamma_{\lhd}(1)=\Id_H$  and  $\gamma_{\lhd}*\alpha(1)=\Id_H.$  
		\item[Heredity:] suppose there exists an integer $n$ such that we have defined $\beta$ and $\alpha$ on $H^{\leq n}$ and for all $x\in H^{\leq n}, \beta*\gamma_{\lhd}(x)=\e(x)\Id_H=\gamma_{\lhd}*\alpha(x)$. Let $x\in H^{\leq n+1}$. By lemma \ref{lem:coalgdecomp}, using linearity, we can suppose $\e(x)=0$. 
		Then, we put:
		\[
		\beta(x)\coloneqq -\gamma_{\lhd}(x)-\beta(x')\circ \gamma_{\lhd}(x'') \text{ and } \alpha(x) \coloneqq -\gamma_{\lhd}(x)-\gamma_{\lhd}(x')\circ \alpha(x'').
		\]
		As $\tilde{\Delta}\left(H^{\leq n+1}\right)\subseteq \sum_{k=1}^{n} H^{\leq k}\otimes H^{\leq n+1-k}, \beta(x)$ and $\alpha(x)$ are well-defined, we get:
		\begin{align*}
			\beta*\gamma_{\lhd}(x)&=\beta(x)\circ\gamma_{\lhd}(1)+\beta(1)\circ\gamma_{\lhd}(x)+\beta(x')\circ\gamma_{\lhd}(x'') \\
			&=\beta(x)+\gamma_{\lhd}(x)+\beta(x')\circ\gamma_{\lhd}(x'') \\
			&=0 =\e(x)\Id_H, \\
			\gamma_{\lhd}*\alpha(x)&=\gamma_{\lhd}(x)\circ \alpha(1)+\gamma_{\lhd}(1)\circ \alpha(x)+\gamma_{\lhd}(x')\circ\alpha(x'') \\
			&=\gamma_{\lhd}(x)+\alpha(x)+\gamma_{\lhd}(x')\circ \alpha(x'') \\
			&=0=\e(x)\Id_H.
		\end{align*}
	\end{description} 
Therefore by the induction principle, $\gamma_{\lhd}$ has a left and right inverse. finally as the convolution algebra is associative, it implies the left and right inverse are equal. So $\gamma_{\lhd}$ is invertible. 
\end{proof}

Thanks to proposition~\ref{Prop:graded=PH}, proposition~\ref{Prop:travelLR} and its proof, we get 
\begin{Prop}\label{Prop:OptiLRPH}
	Let $(H,\cdot,1,\Delta,\e,\lhd)$ be a right \PH{} algebra and suppose that $\Delta$ is cocommutative or $H$ is connected as a coalgebra. Then, $\left(H,\cdoto,1,\Deltao,\e,\lhdo\right)$ is a left \PH{} algebra.
\end{Prop}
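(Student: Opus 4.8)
The plan is to prove the statement by splitting along the disjunctive hypothesis, and the guiding observation is that coopposing the coproduct is exactly the operation that absorbs the cocommutativity step in the proof of Proposition \ref{Prop:travelLR}. Indeed, in that proof the verifications of \eqref{RPH1} and \eqref{RPH2} proceed from \eqref{LPH1} and \eqref{LPH2} and reach, just before cocommutativity is invoked, the identities
\[
(x\cdoto y)\lhdo z=\left(x\lhdo z^{(2)}\right)\cdoto\left(y\lhdo z^{(1)}\right),\qquad
\left(x\lhdo y\right)\lhdo z=x\lhdo\left(\left(y\lhdo z^{(2)}\right)\cdoto z^{(1)}\right),
\]
where $z^{(1)}\otimes z^{(2)}$ still refers to the original coproduct $\Delta$. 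Reading these with the coopposed coproduct $\Deltao$, whose Sweedler legs are swapped, turns the right-hand sides into precisely the expressions demanded by \eqref{RPH1} and \eqref{RPH2} for the new structure. Thus passing from $\Delta$ to $\Deltao$ makes both axioms hold with no cocommutativity assumption.

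In the first case, where $\Delta$ is cocommutative, one has $\Deltao=\Delta$, so the $6$-tuple $\left(H,\cdoto,1,\Deltao,\e,\lhdo\right)$ coincides with $\left(H,\cdoto,1,\Delta,\e,\lhdo\right)$, which is a right \PH{} algebra by Proposition \ref{Prop:travelLR}; there is nothing further to check here.

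In the second case, assume $H$ is connected as a coalgebra. First I would record that $\left(H,\cdoto,1,\Deltao,\e\right)$ is again a bialgebra, and in fact a Hopf algebra with the same antipode $S$, since taking simultaneously the opposite product and the coopposite coproduct turns $S$ (an anti-morphism of both the algebra and the coalgebra structures) back into an honest morphism. Next, $\lhdo$ is a coalgebra morphism for $\Deltao$: this is immediate by functoriality of coopposition, or checked directly by swapping the two legs in $\Delta(y\lhd x)=\left(y^{(1)}\lhd x^{(1)}\right)\otimes\left(y^{(2)}\lhd x^{(2)}\right)$. Relations \eqref{RPH1} and \eqref{RPH2} then hold by the computation recalled above. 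Since the reduced coproduct of $H^{\cop}$ is obtained from that of $H$ by permuting tensor factors, it vanishes at a given order exactly when the original one does, so connectedness is preserved. Proposition \ref{Prop:graded=PH} then applies and hands us, for free, the invertibility of the right multiplication $\gamma_{\lhdo}$ in the convolution algebra, which is the remaining axiom of a right \PH{} algebra.

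The only genuinely delicate point is the Sweedler bookkeeping under coopposition: one must keep track of which coproduct each index refers to and confirm that the step consuming cocommutativity in Proposition \ref{Prop:travelLR} is precisely cancelled by the swap $\Delta\mapsto\Deltao$. Everything else — the Hopf structure on $H^{\cop,\mathrm{op}}$, the coalgebra-morphism property of $\lhdo$, and the preservation of connectedness — is routine, and the invertibility condition is supplied by Proposition \ref{Prop:graded=PH} rather than having to be constructed by hand.
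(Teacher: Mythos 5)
Your proof is correct and follows essentially the same route as the paper: the cocommutative case is delegated to Proposition \ref{Prop:travelLR} (noting $\Deltao=\Delta$ there), and in the connected case you reuse the computations from the proof of Proposition \ref{Prop:travelLR}, observing that passing to $\Deltao$ swaps the Sweedler legs and so absorbs exactly the step where cocommutativity was invoked, before applying Proposition \ref{Prop:graded=PH} to obtain the invertibility of $\gamma_{\lhdo}$. Your version merely makes explicit details the paper leaves implicit (the Hopf structure on the opposite-coopposite bialgebra, the coalgebra-morphism property of $\lhdo$ for $\Deltao$, and the preservation of connectedness under coopposition), which does not change the argument.
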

\begin{proof}
	The cocommutative case is proposition~\ref{Prop:travelLR}. Suppose now $H$ is connected as a coalgebra. Then, $\lhdo$ is a morphism for $\Deltao$. Moreover, $(H,\cdoto,1,\Deltao,\e)$ is a Hopf algebra. Let $x,y,z\in H$ and let us check analogous relations~\eqref{RPH1} and~\eqref{RPH2} for left\PH{} algebras:
	\begin{align*}
		x\lhdo (y\cdoto z)&=(z\cdot y)\lhd x =\left(z\lhd x^{(1)}\right)\cdot \left(y \lhd x^{(2)}\right)
		=\left( x^{(2)} \lhdo y \right)\cdoto \left(x^{(1)} \lhdo z\right), \\
	x \lhdo \left( y\lhdo z \right)&= (z\lhd y) \lhd x = z \lhd \left( \left( y \lhd x^{(1)}\right)\cdot x^{(2)}\right) =\left( x^{(2)}\cdoto \left(x^{(1)}\lhdo y\right)\right)\lhdo z.
	\end{align*}
	Hence, as we have considered the coopposite product, we succeeded. Finally, applying proposition~\ref{Prop:graded=PH} ends the proof. 
\end{proof}

\begin{Lemme}\label{lem:filt+rhd}
	Let $(H,m,1,\Delta,\e,\lhd)$ be a connected (as a coalgebra) \PH{} algebra. 

	Then, for any $n\in\N,	H^{\leq n}\lhd H \subseteq H^{\leq n}.$
\end{Lemme}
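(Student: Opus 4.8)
The plan is to argue by induction on $n$, exploiting that $\rhd$ is a coalgebra morphism to compute the reduced coproduct $\tilde{\Delta}(x\rhd y)$, together with the characterization $H^{\leq n}=\ker\tilde{\Delta}^{(n)}$. For the base case $n=0$ we have $H^{\leq 0}=\K 1$, and lemma~\ref{lem:unit} gives $1\rhd y=\e(y)1\in H^{\leq 0}$, so $H^{\leq 0}\rhd H\subseteq H^{\leq 0}$. For the inductive step I would assume $H^{\leq n}\rhd H\subseteq H^{\leq n}$ and take $x\in H^{\leq n+1}$, $y\in H$. First I would reduce to the case $x,y\in H_+$: writing $y=\e(y)1+(y-\e(y)1)$ and using $x\rhd 1=x$ from lemma~\ref{lem:unit}, one gets $x\rhd y=\e(y)\,x+x\rhd(y-\e(y)1)$, whose first summand already lies in $H^{\leq n+1}$; likewise, using $1\rhd y=\e(y)1$, the component $\e(x)1$ of $x$ contributes nothing once $y\in H_+$. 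This reduction is what makes the decomposition $\Delta(x)=x\otimes 1+1\otimes x+x'\otimes x''$ (and similarly for $y$) available.

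The heart of the argument is the computation of $\tilde{\Delta}(x\rhd y)$. Since $\rhd$ is a coalgebra morphism, $\Delta(x\rhd y)=\sum (x^{(1)}\rhd y^{(1)})\otimes(x^{(2)}\rhd y^{(2)})$. Expanding both coproducts into three terms each and simplifying the nine resulting summands with lemma~\ref{lem:unit} (so that $1\rhd a=\e(a)1$ and $a\rhd 1=a$, and every summand carrying a factor $\e(x')=\e(x'')=\e(y')=\e(y'')=0$ drops out, as does $\e(y)(x\otimes 1+1\otimes x)$ thanks to $y\in H_+$), I expect to obtain
\begin{align*}
	\tilde{\Delta}(x\rhd y)&=(x'\rhd y)\otimes x''+x'\otimes(x''\rhd y)\\
	&\quad+(x'\rhd y')\otimes(x''\rhd y'').
\end{align*}
Now, exactly as already used in the proof of proposition~\ref{Prop:graded=PH}, $x\in H^{\leq n+1}$ yields $\tilde{\Delta}(x)\in\sum_{k=1}^{n}H^{\leq k}\otimes H^{\leq n+1-k}$, so one may take $x'\in H^{\leq n}$ in the Sweedler expansion. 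Consequently each first tensor factor above lies in $H^{\leq n}$: the middle term has $x'\in H^{\leq n}$, and by the induction hypothesis applied to $x'\in H^{\leq n}$ one gets $x'\rhd y\in H^{\leq n}$ and $x'\rhd y'\in H^{\leq n}$.

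To conclude, I would invoke coassociativity of $\tilde{\Delta}$, which gives $\tilde{\Delta}^{(n+1)}=(\tilde{\Delta}^{(n)}\otimes\Id)\circ\tilde{\Delta}$; since every left-hand factor of $\tilde{\Delta}(x\rhd y)$ sits in $H^{\leq n}=\ker\tilde{\Delta}^{(n)}$, this forces $\tilde{\Delta}^{(n+1)}(x\rhd y)=0$, i.e.\ $x\rhd y\in H^{\leq n+1}$, closing the induction. The step I expect to be the main obstacle is the bookkeeping in the nine-term coproduct expansion: one must verify carefully that the only surviving contributions are the three displayed terms (in particular that the unit contributions $\e(y)(x\otimes 1+1\otimes x)$ vanish, which is precisely what the reduction to $y\in H_+$ secures), and that the Sweedler components of $\tilde{\Delta}(x)$ can genuinely be chosen in $H^{\leq n}$ before the induction hypothesis is applied to $x'\rhd y$ and $x'\rhd y'$.
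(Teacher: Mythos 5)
Your proof is correct and takes essentially the same route as the paper's: the paper simply packages your three surviving terms compactly as $\tilde{\Delta}(x\rhd y)=\tilde{\Delta}(x)\rhd\Delta(y)$ for $x,y\in H_+$, then runs the identical induction, using $\tilde{\Delta}^{(n+1)}=\left(\tilde{\Delta}^{(n)}\otimes\Id\right)\circ\tilde{\Delta}$ together with $\tilde{\Delta}(x)\in\sum_{k=1}^{n}H^{\leq k}\otimes H^{\leq n+1-k}$ and the induction hypothesis applied to the left tensor factors. Your nine-term bookkeeping and the reduction to $y\in\ker(\e)$ via lemma~\ref{lem:unit} match the paper's argument step for step.
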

\begin{proof}
	Note that for all $x,y\in H_+$, thanks to lemma \ref{lem:unit} we have:
	\begin{align*}
		\tilde{\Delta}(x\lhd y)&=\Delta(x\lhd y)-1\otimes x\lhd y-x\lhd y\otimes 1 \\
		&=\left(1\otimes x+x\otimes 1+\tilde{\Delta}(x)\right)\lhd\left(1\otimes y+y\otimes 1 +\tilde{\Delta}(y) \right) -1\otimes x\lhd y-x\lhd y\otimes 1\\
		&=\tilde{\Delta}(x)\lhd\tilde{\Delta}(y)+\tilde{\Delta}(x)\lhd (1\otimes y+y\otimes 1) \\
		&=\tilde{\Delta}(x)\lhd \Delta(y).
	\end{align*}
Then, let us prove the lemma by induction over $n$.  
\begin{description}
	\item[Initialization:]for $n=0$, the result is obvious by lemma \ref{lem:unit}.
	\item [Heredity:] suppose there exists $n\geq 0$ such that for any $k\leq n, H^{\leq k}\lhd H\subseteq H^{\leq k}$.
	Let $x\in H^{\leq n+1}$ and $y\in H$. If $y\in H^{\leq 0}$, then $x\lhd y=x\in H^{\leq n+1}$. If $y\in H^+$, then by lemma~\ref{lem:unit}, we can assume $x\in H^+$, so:
	\begin{equation*}
		\tilde{\Delta}^{(n+1)}(x\lhd y)=\left( \tilde{\Delta}^{(n)}\otimes \Id\right)\circ \tilde{\Delta}(x\lhd y) =\left( \tilde{\Delta}^{(n)}\otimes \Id\right)\left(\tilde{\Delta}(x)\lhd\Delta(y)\right).
	\end{equation*}
By construction of the filtration, $\displaystyle\tilde{\Delta}(x)\subseteq \sum_{k=1}^{n} H^{\leq k}\otimes H^{\leq n+1-k}.$ Therefore, by the induction hypothesis $\displaystyle\tilde{\Delta}(x)\lhd\Delta(y)\subseteq \sum_{k=1}^{n} H^{\leq k}\otimes H^{\leq n+1-k}$. So $\tilde{\Delta}^{(n+1)}(x\lhd y)=0.$

Hence it proves $H^{\leq n+1}\lhd H\subseteq H^{\leq n+1}$. \qedhere
\end{description}
\end{proof}

From all those lemmas, we deduce:
\begin{Cor}\label{cor:UhPH}
	Let $(\h,\lhd)$ be a \PL{} algebra. Then, $(\U{\h},\lhd)$ is a \PH{} algebra where $\lhd$ extended as in theorem \ref{thm:extendUg}.
\end{Cor} 
\begin{proof}
	One just needs to see that we can apply proposition~\ref{Prop:graded=PH}. Let us remind that $\U{\h}$ has a filtration given by $\left(\U{\h}^{\leq n}\right)_{n\in\N}$. 
	Moreover by theorem~\ref{thm:extendUg}, we see that $\eqref{RPH1}$ and $\eqref{RPH2}$ are also true. Therefore, $\U{\h}$ is a \PH{} algebra. 
\end{proof}

If $\h$ is a Pre-Lie algebra, notice $\U{\h}=S(\h)$. Therefore, we get a stronger version of a result from of Guin and Oudom~\cite[lemma~$2.10$]{OudomGuin} in the context of Pre-Hopf algebras:
\begin{Cor}
	Let $(\h,\lhd)$ be a Pre-Lie algebra. Then, $(S(\h),\lhd)$ is a Pre-Hopf algebra with $\lhd$ extended as in theorem \ref{thm:extendUg}.
\end{Cor}

\subsection{The main theorem and its corollaries}

We introduce our main theorem implying the \CQMM{} theorem: 
\begin{thm}\label{thm:UnivPH}
	Let $(\h,\lhd )$ be a \PL{} algebra, $(A_+,\cdot)$ be a non-unitary associative algebra endowed with a linear map $\lhdp:A_+\otimes A_+\rightarrow A_+$ and let $\phi:\h\rightarrow A_+$ be a linear map such that:
	\begin{equation*}
		\forall x,y\in\h,\phi\left(\libra{x}{y}\right)=\phi(x)\phi(y)-\phi(y)\phi(x) \text{\normalfont{ and }}
		\phi(x\lhd y)=\phi(x)\lhdp \phi(y).
	\end{equation*}
We put $A=\langle 1 \rangle \oplus A_+$ where $1$ is defined as the unit of $A$. We extend $\lhdp$ by linearity to $A$ by:
\begin{equation*}
	\forall x\in A_+, 1 \lhdp x \coloneqq 0, x \lhdp 1 \coloneqq x \text{\normalfont{ and }} 1 \lhdp 1 \coloneqq 1.
\end{equation*}

Moreover, assume for all $x,y\in \left\langle\ima(\phi)\right\rangle,z\in\ima(\phi)$:
\begin{align}
	(x\cdot y)\lhdp z&=(x\lhdp z)\cdot y+x\cdot(y\lhdp z), \label{hyp1} \\
	x \lhdp (y\cdot z)&=(x\lhdp y)\lhdp z-x\lhdp(y\lhdp z). \label{hyp2}
\end{align}
Then, there exists an unique algebra morphism $\Phi:\mathcal{U}(\h)\rightarrow A$ such that $\Phi|_{\h}=\phi$. Moreover, this morphism satisfies for all $f,g,h\in\mathcal{U}(\h)$:
\begin{align}
	\Phi(f\lhd g)&=\Phi(f)\lhdp \Phi(g), \label{PPL0}\\
	\left(\Phi(f)\cdot\Phi(g)\right)\lhdp   \Phi(h) &=\left(\Phi\left(f\right)\lhdp \Phi\left(h^{(1)}\right)\right) \cdot \left( \Phi(g) \lhdp \Phi\left(h^{(2)}\right)\right), \label{PPL1} \\
	\left( \Phi(f)\lhdp  \Phi(g)\right) \lhdp \Phi(h)&= \Phi(f)\lhdp \biggl[ \left( \Phi(g)\lhdp\Phi\left(h^{(1)}\right)\right)\cdot\Phi\left(h^{(2)}\right)\biggr], \label{PPL2}
\end{align}
where $\lhd:\mathcal{U}(\h)^{\otimes 2}\rightarrow \mathcal{U}(\h)$ is the extension defined in theorem~\ref{thm:extendUg}.
\end{thm}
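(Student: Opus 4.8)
The plan is to obtain $\Phi$ from the universal property of the enveloping algebra and then to verify the three compatibility relations, the first of which does all the work.

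First, since $\phi(\libra{x}{y}) = \phi(x)\phi(y) - \phi(y)\phi(x)$, the map $\phi$ is a morphism of Lie algebras from $\h$ to $A$ equipped with its commutator bracket. As $A$ is unital and associative and $\U{\h}$ is generated as an algebra by $\h$, the universal property of $\U{\h}$ yields a unique unital algebra morphism $\Phi:\U{\h}\to A$ with $\Phi|_{\h}=\phi$; this settles existence and uniqueness. Note that $\Phi(\U{\h}) = \langle\Phi(\h)\rangle = \langle\ima(\phi)\rangle$, so the hypotheses \eqref{hyp1} and \eqref{hyp2} are available for all elements of the form $\Phi(f)$, and one checks they remain valid when one argument is the unit $1$ (both sides reduce using $1\rhd z = 0$ and $z\rhd 1 = z$).

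Next I would prove \eqref{PPL0}, namely $\Phi(f*g) = \Phi(f)\rhd\Phi(g)$, and then derive \eqref{PPL1} and \eqref{PPL2} from it cheaply: applying the algebra morphism $\Phi$ to the identity $(fg)*h = \sum(f*h^{(1)})(g*h^{(2)})$ of Theorem~\ref{thm:extendUg} and using \eqref{PPL0} on each factor gives exactly \eqref{PPL1}; likewise, applying $\Phi$ to $(f*g)*h = \sum f*((g*h^{(1)})h^{(2)})$ and using \eqref{PPL0} three times together with the fact that $\Phi$ respects products gives \eqref{PPL2}. Here the Sweedler terms $h^{(1)},h^{(2)}$ live in $\U{\h}$, so no coalgebra structure on $A$ is needed; $\Phi$ is merely applied termwise to a finite sum.

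The heart of the argument is \eqref{PPL0}, which I would establish in two inductive stages. \emph{Stage one}: for every $F\in\U{\h}$ and $w\in\h$ one has $\Phi(F*w) = \Phi(F)\rhd\phi(w)$, proved by induction on $\len(F)$. For $\len(F)=0$ both sides vanish since $1*w = \e(w)1 = 0$ (as $w$ is primitive) and $1\rhd\phi(w) = 0$; for $\len(F)=1$ it reduces to the defining relation $\phi(x*w) = \phi(x)\rhd\phi(w)$, because the extension of Theorem~\ref{thm:extendUg} restricts to the original product on $\h$; for the inductive step write $F=F'x$ with $x\in\h$, expand $(F'x)*w$ by $(fg)*h=\sum(f*h^{(1)})(g*h^{(2)})$ with $\Delta(w)=1\otimes w+w\otimes 1$, apply $\Phi$ and the induction hypothesis, and recognise the result as the right-hand side of \eqref{hyp1} for $(\Phi(F')\cdot\Phi(x))\rhd\phi(w)$. \emph{Stage two}: the general relation, proved by induction on $\len(g)$. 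Writing $g=g'w$ with $w\in\h$ and using $f*(g'w) = (f*g')*w - f*(g'*w)$, stage one handles the term $(f*g')*w$, whose second argument is a single generator, while Lemma~\ref{lem:filt+rhd} guarantees $g'*w\in\U{\h}^{\leq\len(g')}$ so the induction hypothesis applies to $f*(g'*w)$; the two terms then combine, via \eqref{hyp2} applied to $\Phi(f)\rhd(\Phi(g')\cdot\phi(w))$, into $\Phi(f)\rhd\Phi(g)$.

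The main obstacle is the bookkeeping in stage two: one must ensure the induction genuinely decreases, which is exactly why Lemma~\ref{lem:filt+rhd} (stability of the coradical filtration under $\rhd$) is needed to keep $g'*w$ in filtration degree $\leq\len(g')$, and why stage one must be proved first and separately. Matching the signs and the order of arguments in \eqref{hyp1}/\eqref{hyp2} with the recursions of Theorem~\ref{thm:extendUg} is the only point requiring care; everything else is direct substitution.
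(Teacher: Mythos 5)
Your proof is correct, but it is organised differently from the paper's, and the difference is substantive. The paper proves \eqref{PPL0} by induction on $n=\max(\len(f),\len(g))$, which forces a three-case analysis; in the case $\len(f)=n+1$, $\len(g)\leq n$ it must expand the \emph{left} argument via $(f'f_0)*g=\sum\left(f'*g^{(1)}\right)\left(f_0*g^{(2)}\right)$, and to close the loop it needs the technical lemma~\ref{lem:proofthm}, itself proved by a separate induction on $\len(g)$ using \eqref{hyp1}, \eqref{hyp2} and the compatibility of $\Delta$ with $*$ from theorem~\ref{thm:extendUg}. You avoid this entirely by inducting only on $\len(g)$ while keeping $f$ \emph{universally quantified} in the induction hypothesis: since the hypothesis holds for all $f$, the problematic case where the left argument grows never arises, so neither lemma~\ref{lem:proofthm} nor any coproduct computation beyond $\Delta(w)=1\otimes w+w\otimes 1$ for primitive $w$ is needed. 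Your stage one (the base case $\len(g)=1$ for arbitrary $f$, by induction on $\len(f)$ using only \eqref{hyp1}) is exactly the ingredient that makes this work, and your stage two matches the paper's case 1 almost verbatim, including the appeal to lemma~\ref{lem:filt+rhd} to keep $g'*w$ in filtration degree $\leq\len(g')$ and the final application of \eqref{hyp2}; the derivation of \eqref{PPL1} and \eqref{PPL2} from \eqref{PPL0} is identical to the paper's. What your route buys is a shorter and structurally cleaner argument (one auxiliary statement instead of a multi-page lemma, no mixed coproduct bookkeeping); what the paper's route buys is that lemma~\ref{lem:proofthm} establishes the multiplicative identity underlying \eqref{PPL1} directly at the level of $\rhd$ on $A$, which is of some independent interest. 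One pedantic point: $\Phi(\U{\h})$ is $\langle 1\rangle+\left\langle\ima(\phi)\right\rangle$ rather than $\left\langle\ima(\phi)\right\rangle$ (the latter is non-unital), but since you explicitly verify that \eqref{hyp1} and \eqref{hyp2} extend to arguments equal to $1$, this does not affect the argument.
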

Before proving theorem~\ref{thm:UnivPH}, we need a technical lemma
\begin{Lemme}\label{lem:proofthm}
	Let $n\in\N,n\geq 2$. Let $\Phi:\U{\h}\rightarrow A$ be the algebra morphism given above (satisassumingfying \eqref{hyp1} and \eqref{hyp2}) and assume for any $f,g\in\U{\h}^{\leq n}$:
	\begin{equation}
		\Phi(f\lhd g)=\Phi(f)\lhdp\Phi(g). \label{hyp0}
	\end{equation}
	Then, for all $f,g\in\U{\h}^{\leq n}$ and for all $f_0\in\h$:
	\[
	\Phi(ff_0)\lhdp \Phi(g)=\left(\Phi(f)\lhdp\Phi\left(g^{(1)}\right)\right)\cdot\left(\Phi(f_0)\lhdp \Phi\left(g^{(2)}\right)\right).
	\]
\end{Lemme}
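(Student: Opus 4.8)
The plan is to fix $f\in\U{\h}^{\leq n}$ and $f_0\in\h$, set $a=\Phi(f)$ and $b=\Phi(f_0)=\phi(f_0)$ so that $\Phi(ff_0)=ab$ (as $\Phi$ is an algebra morphism), and prove the identity
\[
(ab)\rhd\Phi(g)=\left(a\rhd\Phi\left(g^{(1)}\right)\right)\cdot\left(b\rhd\Phi\left(g^{(2)}\right)\right)
\]
by induction on $\len(g)$, for all $g\in\U{\h}^{\leq n}$; I write $R(g)$ for the right-hand side. For the base case $\len(g)\leq 1$: when $g=1$ both sides equal $ab$ using $x\rhd 1=x$, and when $g\in\h$ we have $\Delta(g)=g\otimes 1+1\otimes g$, so after using $x\rhd 1=x$ the quantity $R(g)$ becomes $(a\rhd\Phi(g))\cdot b+a\cdot(b\rhd\Phi(g))$; since $\Phi(g)=\phi(g)\in\ima(\phi)$ and $a,b\in\langle\ima(\phi)\rangle$, this is exactly $(ab)\rhd\Phi(g)$ by hypothesis \eqref{hyp1}.

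For the inductive step, assume the identity for all filtration degrees $<m$ and take $g$ with $\len(g)=m\geq 2$. By linearity I may assume $g=g_1g_0$ with $g_1\in\U{\h}^{\leq m-1}$ and $g_0\in\h$; put $c=\Phi(g_1)$ and $d=\phi(g_0)\in\ima(\phi)$, so $\Phi(g)=c\cdot d$. Applying \eqref{hyp2} with $x=ab$, $y=c$, $z=d$ gives
\[
(ab)\rhd(c\cdot d)=\left((ab)\rhd c\right)\rhd d-(ab)\rhd(c\rhd d).
\]
On the first summand I apply the induction hypothesis to $g_1$ to replace $(ab)\rhd c$ by $R(g_1)$. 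For the second, I note $c\rhd d=\Phi(g_1)\rhd\Phi(g_0)=\Phi(g_1*g_0)$ by \eqref{hyp0}, and that $\len(g_1*g_0)\leq\len(g_1)<m$ by lemma~\ref{lem:filt+rhd}, so the induction hypothesis again applies and $(ab)\rhd(c\rhd d)=R(g_1*g_0)$. Hence the left-hand side equals $R(g_1)\rhd d-R(g_1*g_0)$.

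It then remains to recognise this as $R(g)$. To that end I expand $R(g_1)\rhd d$ by \eqref{hyp1} (each Sweedler factor $a\rhd\Phi(g_1^{(1)})=\Phi(f*g_1^{(1)})$ lies in $\langle\ima(\phi)\rangle$ by \eqref{hyp0}, and likewise on the $b$-side); I expand $R(g)$ from $\Delta(g_1g_0)=g_1^{(1)}g_0\otimes g_1^{(2)}+g_1^{(1)}\otimes g_1^{(2)}g_0$, splitting the factors $a\rhd(\Phi(g_1^{(1)})\cdot d)$ and $b\rhd(\Phi(g_1^{(2)})\cdot d)$ through \eqref{hyp2}; and I expand $R(g_1*g_0)$ using the coalgebra-morphism property $\Delta(g_1*g_0)=(g_1^{(1)}*g_0)\otimes g_1^{(2)}+g_1^{(1)}\otimes(g_1^{(2)}*g_0)$ (from $\Delta(f*g)=\Delta(f)*\Delta(g)$ and $f*1=f$ of theorem~\ref{thm:extendUg}), followed by \eqref{hyp0} to rewrite $\Phi(g_1^{(i)}*g_0)=\Phi(g_1^{(i)})\rhd d$. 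A termwise comparison then yields $R(g_1)\rhd d-R(g_1*g_0)=R(g)$, closing the induction.

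I expect the main obstacle to be precisely this last comparison: the Sweedler-notation bookkeeping, and above all the discipline of checking that every element fed into \eqref{hyp1}, \eqref{hyp2}, \eqref{hyp0} and the induction hypothesis meets the required membership and degree constraints. The keystone is the inequality $\len(g_1*g_0)\leq\len(g_1)$ supplied by lemma~\ref{lem:filt+rhd}, since it is what guarantees the recursion never leaves $\U{\h}^{\leq n}$ and thus keeps \eqref{hyp0} available throughout.
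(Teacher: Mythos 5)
Your proof is correct and takes essentially the same route as the paper's: induction on $\len(g)$ with the decomposition $g=g_1g_0$ ($g_0\in\h$), applying \eqref{hyp2} to split $\Phi(ff_0)\rhd(\Phi(g_1)\cdot\Phi(g_0))$, using \eqref{hyp0} together with lemma~\ref{lem:filt+rhd} to keep $g_1*g_0$ in low enough filtration degree for the induction hypothesis, and matching the four resulting terms against $\Delta(g_1g_0)={g_1}^{(1)}g_0\otimes {g_1}^{(2)}+{g_1}^{(1)}\otimes {g_1}^{(2)}g_0$ via the fact that $\Delta$ is a morphism for $*$. The only cosmetic differences are your explicit base case at $\len(g)=1$ handled directly by \eqref{hyp1} (the paper initializes at $g=1$ and lets the heredity cover that case) and that you perform the final identification by expanding $R(g)$ and comparing termwise, which is the paper's recombination step read in reverse.
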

\begin{proof}[Proof of lemma~\ref{lem:proofthm}]
	Before beginning the proof, note for any $f\in\U{\h},\Phi(f)\in\left\langle \ima(\phi) \right\rangle$ because it is a morphism.
	Let $n\in\N^*$ and $f\in\U{\h}^{\leq n}$. We proceed by induction on $r\coloneqq\len(g)$.
	\begin{description}
		\item[Initialization:] we start with $r=0$. Then $g\in \K\cdot 1$, in this case the lemma is true.
		\item[Heredity:]  suppose there exists an integer $0\leq r<n$ such that the conclusion of the lemma is true. Let $g\in \U{\h}$ such that $\len(g)=r+1$. By linearity and the induction hypothesis, we can suppose that $g$ is an element of $\h^{\otimes r+1}$. Then, there exist $g_0\in\h$ and $g'\in\U{\h}^{\leq r}$ such that $g=g'g_0$.  So, using assumption~$\eqref{hyp2}$ and the induction hypothesis we get:
		\begin{align}
			&\Phi(ff_0)\lhdp\Phi\left(g'g_0\right) \nonumber\\
			=&\Phi(ff_0)\lhdp \left(\Phi\left(g'\right)\Phi(g_0)\right) \nonumber\\
			=&\left(\Phi(ff_0)\lhdp \Phi\left(g'\right)\right)\lhdp\Phi(g_0) - \Phi(ff_0)\lhdp \left( \Phi\left(g'\right)\lhdp \Phi(g_0)\right)  \nonumber\\
			=& \left[\left(\Phi(f)\lhdp \Phi\left({g'}^{(1)}\right)\right)\cdot\left(\Phi(f_0)\lhdp \Phi\left({g'}^{(2)}\right)\right)\right]\lhdp \Phi(g_0) -\Phi(ff_0)\lhdp\bigl(\Phi(g')\lhdp \Phi(g_0)\bigr) \label{eq:lem_proof}.
		\end{align}
		Moreover, using the lemma hypothesis~$\eqref{hyp0}$, we get 
		\begin{align*}
			\Phi(f)\lhdp \Phi\left({g'}^{(1)}\right)&=\Phi\left(f\lhd {g'}^{(1)}\right)\in\left\langle \ima(\phi) \right\rangle, \\
			\Phi(ff_0)\lhdp\left(\Phi\left(g'\right) \lhdp\Phi(g_0)\right)&=\Phi(ff_0)\lhdp\Phi(g'\lhd g_0),
		\end{align*}
		where $g'\lhd g_0\in \U{\h}^{\leq r}$ by lemma \ref{lem:filt+rhd}.
		 Thanks to hypothesis~\eqref{hyp1}, the first term of equation~\eqref{eq:lem_proof} is:
		\begin{align*}
			&\left[\left(\Phi(f)\lhdp \Phi\left({g'}^{(1)}\right)\right)\cdot\left(\Phi(f_0)\lhdp \Phi\left({g'}^{(2)}\right)\right)\right]\lhdp \Phi(g_0) \\
			=& \left[\left(\Phi(f)\lhdp \Phi\left({g'}^{(1)}\right)\right)\lhdp\Phi(g_0)\right]\cdot \left(\Phi(f_0)\lhdp\Phi\left({g'}^{(2)}\right)\right) \\
			+& \left(\Phi(f)\lhdp\Phi\left({g'}^{(1)}\right)\right)\cdot \left[\left(\Phi(f_0)\lhdp \Phi\left({g'}^{(2)}\right)\right)\lhdp\Phi(g_0)\right]).
		\end{align*}
		 So using the induction hypothesis, one has:
		\begin{align*}
			\Phi(ff_0)\lhdp\left(\Phi(g')\lhdp \Phi(g_0)\right)&=\left( \Phi(f)\lhdp \Phi\left(\left(g'\lhd g_0\right)^{(1)}\right)\right)\cdot \left( \Phi(f_0)\lhdp \Phi\left(\left(g'\lhd g_0\right)^{(2)}\right)\right). 
		\end{align*}
		By theorem~\ref{thm:extendUg}, we know $\Delta$ is a morphism for $\lhd $. As a consequence:
		\begin{align*}
			&\Phi(ff_0)\lhdp\left(\Phi(g')\lhdp \Phi(g_0)\right) \\
			=&\left( \Phi(f)\lhdp \Phi\left({g'}^{(1)}\lhd {g_0}^{(1)}\right)\right)\cdot \left( \Phi(f_0)\lhdp \Phi\left({g'}^{(2)}\lhd {g_0}^{(2)}\right)\right) \\
			=&\left[ \Phi(f)\lhdp
			\left(\Phi\left({g'}^{(1)}\right)\lhdp\Phi\left({g_0}^{(1)}\right)\right)\right]\cdot \left[ \Phi(f_0)\lhdp \left(\Phi\left({g'}^{(2)}\right)\lhdp\Phi\left({g_0}^{(2)}\right)\right)\right] \\
			=&\left[ \Phi(f)\lhdp
			\left(\Phi\left({g'}^{(1)}\right)\lhdp\Phi\left(g_0\right)\right)\right]\cdot \left( \Phi(f_0)\lhdp \Phi\left({g'}^{(2)}\right)\right) \\
			+&\left( \Phi(f)\lhdp
			\Phi\left({g'}^{(1)}\right)\right)\cdot \left[ \Phi(f_0)\lhdp \left(\Phi\left({g'}^{(2)}\right)\lhdp\Phi\left(g_0\right)\right)\right],
		\end{align*}
		where we used the hypothesis $\eqref{hyp0}$ (as $r<n$) for the second equality. Putting things together, we have:
		\begin{align*}
			&\Phi(ff_0)\lhdp\Phi(g'g_0) \\
			=& \color{blue}{\left[\left(\Phi(f)\lhdp \Phi\left({g'}^{(1)}\right)\right)\lhdp\Phi(g_0)\right]\cdot \left(\Phi(f_0)\lhdp\Phi\left({g'}^{(2)}\right)\right)} \\
			+& \color{red}{\left[\Phi(f)\lhdp\Phi\left({g'}^{(1)}\right)\right]\cdot \left(\left(\Phi(f_0)\lhdp \Phi\left({g'}^{(2)}\right)\right)\lhdp\Phi(g_0)\right)} \\
			-&\color{blue}{\left[ \Phi(f)\lhdp
				\left(\Phi\left({g'}^{(1)}\right)\lhdp\Phi\left(g_0\right)\right)\right]\cdot \left( \Phi(f_0)\lhdp \Phi\left({g'}^{(2)}\right)\right)} \\
			-&\color{red}{\left( \Phi(f)\lhdp
				\Phi\left({g'}^{(1)}\right)\right)\cdot \left[ \Phi(f_0)\lhdp \left(\Phi\left({g'}^{(2)}\right)\lhdp\Phi\left(g_0\right)\right)\right]}.
		\end{align*}
		Finally, as we supposed \eqref{hyp2}, the highlighted terms simplify:
		\begin{align*}
			&\Phi(ff_0)\lhdp \Phi(g'g_0) \\
			=&\color{blue}{\left[\Phi(f)\lhdp \left(\Phi\left({g'}^{(1)}\right)\cdot \Phi(g_0)\right) \right]\cdot  \left(\Phi(g_0)\lhdp \Phi\left({g'}^{(2)}\right)\right)} \\
			+&\color{red}{\left( \Phi(f)\lhdp \Phi\left({g'}^{(1)}\right) \right)\cdot \left[\Phi(f_0)\lhdp \left(\Phi\left({g'}^{(2)}\right)\cdot\Phi(g_0)\right)\right]}  \\
			=& \left( \Phi(f)\lhdp \Phi\left({g'}^{(1)}g_0\right)\right)\cdot  \left(\Phi(g_0)\lhdp \Phi\left({g'}^{(2)}\right)\right) \\
			+&\left( \Phi(f)\lhdp \Phi\left({g'}^{(1)}\right) \right)\cdot \left(\Phi(f_0)\lhdp \Phi\left({g'}^{(2)}g_0\right)\right).
		\end{align*}
		As $g_0\in\h$ and $\Delta$ is a morphism, one gets:
		\[
		\Delta(g)=\Delta(g'g_0)=\Delta(g')\Delta(g_0)={g'}^{(1)}\otimes {g'}^{(2)}g_0+{g'}^{(1)}g_0\otimes {g'}^{(2)}.
		\]
		As a consequence:
		\[
		\Phi(ff_0)\lhdp \Phi(g)=\left(\Phi(f')\lhdp\Phi\left(g^{(1)}\right)\right)\cdot\left(\Phi(f_0)\lhdp \Phi\left(g^{(2)}\right)\right).
		\]
		This proves the induction hypothesis for the rank $r+1$ and so the lemma. \qedhere
	\end{description}
\end{proof}

\begin{proof}[Proof of theorem \ref{thm:UnivPH}]
	The only thing to do is to prove equations~\eqref{PPL0}, \eqref{PPL1} and \eqref{PPL2} as the first part of the proposition comes from the universal property of enveloping algebras. To prove this result, it is enough to prove \eqref{PPL0}.
	Indeed,  let us suppose $\eqref{PPL0}$ is true. Let $f,g,h$ be three elements of $\U{\h}$. Then by construction of the product $\lhd$ over \U{\h}:
	\begin{align*}
		\left(\Phi(f)\cdot \Phi(g)\right)\lhdp \Phi(h)&=\Phi((fg)\lhd h) \\
		&=\Phi\left(\left(f\lhd h^{(1)}\right) \cdot \left(g\lhd h^{(2)}\right)\right) \\
		&=\Phi\left(f\lhd h^{(1)}\right)\cdot \Phi\left(g\lhd h^{(2)}\right) \\
		&=\left(\Phi\left(f\right)\lhdp \Phi\left(h^{(1)}\right)\right) \cdot \left( \Phi(g) \lhdp \Phi\left(h^{(2)}\right)\right).
	\end{align*}
Moreover:
\begin{align*}
	\left( \Phi(f)\lhdp  \Phi(g)\right) \lhdp \Phi(h)&=\Phi\left( (f\lhd g)\lhd h \right) \\
	&=\Phi\left( f\lhd \left(\left(g\lhd h^{(1)}\right)h^{(2)}\right)\right) \\
	&=\Phi(f)\lhdp \left( \left( \Phi(g)\lhdp\Phi\left(h^{(1)}\right)\right)\cdot\Phi\left(h^{(2)}\right)\right).
\end{align*}
Hence, properties~\eqref{PPL1} and~\eqref{PPL2} are implied by equation~\eqref{PPL0}.
To prove equation~\eqref{PPL0}, we proceed by an induction over $n=\max(\len(f),\len(g))$. By linearity of $\Phi$, it is enough to prove this for words over $\h$. 

\begin{description}
	\item[Initialization:] the case $n=0$ is straightforward. Then, consider the case $n=1$. By definition of $\lhdp$ extended to $A$ and the relations in theorem \ref{thm:extendUg} defining $\lhd $ with units in \U{\h}, we conclude equation~$\eqref{PPL0}$ is true whenever $f$ or $g$ is equal to $1$. If $f$ and $g$ are both elements of $\h$:
	\[
	\Phi(f\lhd g)=\phi(f\lhd g)=\phi(f)\lhdp \phi(g)=\Phi(f)\lhdp \Phi(g).
	\] 
	\item[Heredity:] assume there exists $n\geq 1$ such that for all $f,g\in\U{\h}^{\leq n}$. Then:
	\[
	\Phi(f\lhd g)=\Phi(f)\lhdp \Phi(g).
	\]
	Let us consider $(f,g)\in\U{\h}^{2}$ such that $\max(\len(f),\len(g))=n+1$.
	
	\textbf{Case 1:} $\len(f)\leq n$ and $\len(g)=n+1$.
	
	Then, up to terms with smaller degrees, there exists $g'\in\h^{\otimes n}$ and $g_0\in\h$ such that $g=g'g_0.$ From $\lhd$ construction of theorem~\ref{thm:extendUg}, lemma~\ref{lem:filt+rhd} and the induction hypothesis, one has:
	\begin{align*}
		\Phi(f\lhd g)&=\Phi\left(f\lhd\left(g'g_0\right)\right) \\
		&=\Phi\left( \left(f\lhd g'\right)\lhd g_0-f\lhd\left(g'\lhd g_0\right) \right) \\
		&= \left(\Phi(f)\lhdp \Phi(g')\right)\lhdp \Phi(g_0)-\Phi(f)\lhdp \left(\Phi(g')\lhdp \Phi(g_0)\right).
	\end{align*}
	Hypothesis~\eqref{hyp2} implies:
	\begin{equation*}
		\Phi(f\lhd g)=\Phi(f)\lhdp \left( \Phi\left(g'\right)\cdot \Phi(g_0)\right) =\Phi(f)\lhdp \Phi(g).
	\end{equation*}
	
	\textbf{Case 2:} $\len(f)=n+1$ and $\len(g)\leq n$.
	
	Hence, up to terms with smaller degrees, there exists $f_0\in\h$ and $f'\in\h^{\otimes n}$ such that $f=f'f_0.$ Then, by theorem \ref{thm:extendUg} and the induction hypothesis:
	\begin{align*}
		\Phi(f\lhd g)&=\Phi\left(\left(f'f_0\right)\lhd g\right) \\
		&=\Phi\left(\left(f'\lhd g^{(1)}\right)\left(f_0\lhd g^{(2)}\right)\right) \\
		&=\left(\Phi(f')\lhdp \Phi\left(g^{(1)}\right)\right)\cdot \left(\Phi(f_0)\lhdp \Phi\left(g^{(2)}\right)\right).
	\end{align*}
	Here, lemma \ref{lem:proofthm} applies as $\eqref{hyp0}$ is our induction hypothesis. As a consequence:
	\[
	\Phi(f'f_0)\lhdp \Phi(g)=\left(\Phi(f')\lhdp\Phi\left(g^{(1)}\right)\right)\cdot\left(\Phi(f_0)\lhdp \Phi\left(g^{(2)}\right)\right).
	\]
	So it proves $\Phi(f\lhd g)=\Phi(f)\lhdp \Phi(g)$ in this case.
	
	\textbf{Case 3:} $\len(f)=\len(g)=n+1.$
	
	Thus, there exist $g_0\in\h$ and $g'\in \h^{\otimes n}$ such that $g=g'g_0$. Consequently:
	\begin{align*}
		\Phi(f\lhd g)&=\Phi(f\lhd g'g_0) \\
		&= \Phi\bigl( \left(f\lhd g'\right)\lhd g_0-f\lhd\left(g'\lhd g_0\right)\bigr) \\
		&=\Phi\bigl((f\lhd g')\lhd g_0\bigr)-\Phi\bigl(f\lhd(g'\lhd g_0)\bigr).
	\end{align*} 
	However, by lemma \ref{lem:filt+rhd}, we know $f\lhd g'\in\U{\h}^{\leq {n+1}}$, $g'\lhd g_0\in\U{\h}^{\leq n}$ where $g_0\in\h$. As a consequence, from cases~1 and~2 of this proof, we get:
	\begin{align*}
		\Phi\bigl(\left(f\lhd g'\right)\lhd g_0\bigr)&=\bigl(\Phi(f)\lhdp \Phi(g')\bigr) \lhdp \Phi(g_0), \\
		\Phi\bigl(f\lhd\left(g'\lhd g_0\right)\bigr)&=\Phi(f)\lhdp \bigl(\Phi(g')\lhdp \Phi(g_0)\bigr).
	\end{align*}
	So, hypothesis~\eqref{hyp2} implies $\Phi(f\lhd g)=\Phi(f)\lhdp \Phi(g)$. Therefore we have proved equation~$\eqref{PPL0}$.
\end{description} 
Finally, the theorem is true by the induction principle.
\end{proof}

Then, we just give some classical tools useful to prove \CQMM{} theorem in the classic case~\cite{AlgHopf}.

\begin{Lemme}\label{lem:CQQM1}
	Let $H$ be a bialgebra and $n\in\N$. For all $(p_1,\dots,p_n)\in\Prim(H)^n$, we have:
	\[
	\tilde{\Delta}^{(n-1)}\left(p_1\cdot\ldots\cdot p_n\right)=\sum_{\sigma\in S_n} p_{\sigma(1)}\otimes \dots\otimes p_{\sigma(n)}.
	\]
		Moreover, if $k\geq n, \tilde{\Delta}^{(k)}\left(p_1\cdot\ldots\cdot p_n\right)=0$.
\end{Lemme}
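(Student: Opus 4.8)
The plan is to prove both assertions by a single strong induction on $n$, peeling off one primitive at a time and using coassociativity of $\tilde{\Delta}$. First I would record the two elementary facts about a primitive $v$ that everything rests on: since $\Delta(v)=1\otimes v+v\otimes 1$, applying $\e\otimes\Id$ gives $\e(v)=0$, so $v\in H_+$ and $\rho(v)=v$; and $\tilde{\Delta}(v)=\Delta(v)-1\otimes v-v\otimes 1=0$, whence by coassociativity $\tilde{\Delta}^{(k)}(v)=0$ for every $k\geq 1$. The base case $n=1$ is then just $\tilde{\Delta}^{(0)}(v_1)=\rho(v_1)=v_1$.

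Next I would compute the single reduced coproduct of the product. Because $\Delta$ is an algebra morphism and each $v_i$ is primitive, $\Delta(v_1\cdots v_n)=\prod_{i=1}^n\left(v_i\otimes 1+1\otimes v_i\right)$, and expanding this product in $H\otimes H$ over subsets $S\subseteq\{1,\dots,n\}$ yields $\Delta(v_1\cdots v_n)=\sum_{S} v_S\otimes v_{\overline{S}}$, where $v_S$ is the product of the $v_i$ with $i\in S$ taken in increasing order of index and $\overline{S}$ is the complement. Subtracting the two extremal terms $S=\emptyset$ and $S=\{1,\dots,n\}$ gives $\tilde{\Delta}(v_1\cdots v_n)=\sum_{\emptyset\neq S\subsetneq\{1,\dots,n\}} v_S\otimes v_{\overline{S}}$. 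Now, by coassociativity of $\tilde{\Delta}$ I can regroup the iterated coproduct as $\tilde{\Delta}^{(n-1)}=(\tilde{\Delta}^{(n-2)}\otimes\Id)\circ\tilde{\Delta}$ and apply it to the expansion, producing $\sum_{\emptyset\neq S\subsetneq\{1,\dots,n\}}\tilde{\Delta}^{(n-2)}(v_S)\otimes v_{\overline{S}}$.

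Here is the crux, and the step I expect to carry the whole argument: each $v_S$ is a product of $|S|$ primitives, so if $|S|<n-1$ then $n-2\geq|S|$ and the \emph{vanishing} assertion applied to the strictly shorter product $v_S$ forces $\tilde{\Delta}^{(n-2)}(v_S)=0$. Thus only the subsets with $|S|=n-1$, that is $\overline{S}=\{j\}$, survive, and the sum collapses to $\sum_{j=1}^n \tilde{\Delta}^{(n-2)}(v_1\cdots\widehat{v_j}\cdots v_n)\otimes v_j$. The inductive form of the main identity for the $n-1$ primitives $(v_i)_{i\neq j}$ rewrites each summand as a sum over the orderings of $\{1,\dots,n\}\setminus\{j\}$; appending $v_j$ in the final slot identifies these orderings bijectively with the permutations $\sigma\in S_n$ satisfying $\sigma(n)=j$, and ranging over $j$ recovers all of $S_n$, giving exactly $\sum_{\sigma\in S_n} v_{\sigma(1)}\otimes\cdots\otimes v_{\sigma(n)}$.

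Finally I would deduce the ``moreover'' statement from the main identity at the same length, closing the induction: for $k\geq n$, coassociativity gives $\tilde{\Delta}^{(k)}=(\tilde{\Delta}^{(k-n+1)}\otimes\Id^{\otimes n-1})\circ\tilde{\Delta}^{(n-1)}$, and applying the main formula together with $\tilde{\Delta}^{(k-n+1)}(v_{\sigma(1)})=0$ (valid since $k-n+1\geq 1$ and $v_{\sigma(1)}$ is primitive) annihilates every term. Since the vanishing for products of length $<n$ is precisely what feeds the collapse in the inductive step, the clean organization is a strong induction on $n$ in which I first establish the permutation identity and then immediately derive the vanishing from it; the only genuine subtlety to double-check is that the regroupings of $\tilde{\Delta}^{(k)}$ used above are legitimate, which follows from the stated coassociativity of $\tilde{\Delta}$.
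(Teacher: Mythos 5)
Your proof is correct. Note that the paper itself gives no proof of Lemma \ref{lem:CQQM1}: it is listed among the ``classical lemmas'' used for the \CQMM{} theorem and stated without argument, so there is nothing in-paper to compare against; your strong induction is precisely the standard argument that fills this gap. The delicate points all check out: the expansion $\Delta(v_1\cdots v_n)=\sum_{S\subseteq\{1,\dots,n\}} v_S\otimes v_{\overline{S}}$ is valid because $\Delta$ is an algebra morphism and the left (resp.\ right) tensor legs multiply in increasing index order; the regrouping $\tilde{\Delta}^{(n-1)}=\bigl(\tilde{\Delta}^{(n-2)}\otimes\Id\bigr)\circ\tilde{\Delta}$ is legitimate by coassociativity of $\tilde{\Delta}$, including the edge case $n=2$ where $\tilde{\Delta}^{(0)}=\rho$ occurs, since $\tilde{\Delta}(H)\subseteq H_+\otimes H_+$ and $\rho$ restricts to the identity on $H_+$; and the induction is well-founded, as the identity at length $n$ consumes only the vanishing statement at lengths $\leq n-2$ and the identity at length $n-1$, while the vanishing at length $n$ follows from the identity at length $n$ together with $\tilde{\Delta}^{(k-n+1)}(v_{\sigma(1)})=0$ for $k\geq n$. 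For what it is worth, one can compress the argument by observing that $\tilde{\Delta}^{(n-1)}=\rho^{\otimes n}\circ\Delta^{(n-1)}$ on $H_+$: since $\Delta^{(n-1)}(v_1\cdots v_n)=\prod_i\Delta^{(n-1)}(v_i)$ and $\Delta^{(n-1)}(v)=\sum_{j=1}^{n} 1^{\otimes j-1}\otimes v\otimes 1^{\otimes n-j}$, the projection $\rho^{\otimes n}$ kills every term having a $1$ in some slot, leaving exactly the terms with one $v_i$ per slot, i.e.\ the sum over $S_n$ — but your subset-collapse induction is equally standard and fully rigorous as written.
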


\begin{Lemme}\label{lem:CQQM2}
	Let $H$ be a bialgebra and let $x\in H$ such that there exists $n\in\N$ with $\tilde{\Delta}^{(n)}(x)=0.$ Then:
	\[
	\tilde{\Delta}^{(n-1)}(x)\in\Prim(H)^{\otimes n}.
	\]
\end{Lemme}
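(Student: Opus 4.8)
The plan is to characterise $\Prim(H)^{\otimes n}$ as an intersection of kernels and then to collapse everything onto $\tilde\Delta^{(n)}$ via coassociativity. First I would record that $\tilde\Delta^{(n-1)}(x)$ already lies in $H_+^{\otimes n}$: this follows by a trivial induction from $\tilde\Delta(H)\subseteq H_+\otimes H_+$, which is part of the definition of $\tilde\Delta$. Since a primitive element is exactly an element of $H_+$ killed by $\tilde\Delta$, i.e. $\Prim(H)=\ker\!\left(\tilde\Delta|_{H_+}\right)$, the goal reduces to showing that $\tilde\Delta^{(n-1)}(x)$ is annihilated when $\tilde\Delta$ is applied in each one of its $n$ tensor slots.

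The linear-algebra ingredient I would invoke is that, over the field $\K$, for subspaces one has
\[
\Prim(H)^{\otimes n}=\bigcap_{i=1}^{n}\ker\!\left(\Id^{\otimes i-1}\otimes\tilde\Delta\otimes\Id^{\otimes n-i}\right)
\]
inside $H_+^{\otimes n}$. This is the standard identity $\bigotimes_i\ker(f_i)=\bigcap_i\ker\!\left(\Id\otimes\cdots\otimes f_i\otimes\cdots\otimes\Id\right)$, which I would justify by choosing a complement $H_+=\Prim(H)\oplus C$ and decomposing $H_+^{\otimes n}$ along the induced grading; over a field, tensoring an injection with identity maps remains injective, and that is precisely what makes the identity hold.

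Next I would establish the generalised coassociativity of $\tilde\Delta$: for every $1\le i\le n$,
\[
\left(\Id^{\otimes i-1}\otimes\tilde\Delta\otimes\Id^{\otimes n-i}\right)\circ\tilde\Delta^{(n-1)}=\tilde\Delta^{(n)}.
\]
The case $i=1$ is the very definition of $\tilde\Delta^{(n)}$. The remaining cases I would obtain by induction on $n$, using that $\tilde\Delta$ is coassociative (as noted when $\tilde\Delta$ is introduced) to slide the inner $\tilde\Delta$ from slot $i$ into slot $1$, together with the fact that operators acting on disjoint tensor slots commute.

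With both ingredients in place the conclusion is immediate: since $\tilde\Delta^{(n)}(x)=0$, generalised coassociativity shows that $\tilde\Delta^{(n-1)}(x)$ lies in $\ker\!\left(\Id^{\otimes i-1}\otimes\tilde\Delta\otimes\Id^{\otimes n-i}\right)$ for every $i$, hence in the intersection, which is $\Prim(H)^{\otimes n}$. The only step demanding genuine care --- and thus the main obstacle --- is the bookkeeping in the generalised coassociativity claim; everything else is formal. I would settle that claim by the short induction sketched above rather than by manipulating Sweedler indices, so as to keep the slot-positions unambiguous.
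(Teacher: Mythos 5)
Your proof is correct, but there is nothing in the paper to compare it against: lemma \ref{lem:CQQM2} is one of the ``classical lemmas'' (together with lemmas \ref{lem:CQQM1} and \ref{lem:CQQM4}) that the paper states without proof before using them in corollary \ref{Cor:CQQM}. Judged on its own, your argument is the standard one and it is complete. Both ingredients check out: the identity $\Prim(H)^{\otimes n}=\bigcap_{i=1}^{n}\ker\left(\Id^{\otimes i-1}\otimes\tilde{\Delta}\otimes\Id^{\otimes n-i}\right)$ inside $H_+^{\otimes n}$ does hold over the field $\K$, and your justification via a complement $H_+=\Prim(H)\oplus C$ is exactly the right one, since it rests on the fact that tensoring an injection with identities stays injective over a field (this is where $\K$ being a field, not merely a ring, is used); and the generalised coassociativity of $\tilde{\Delta}$ follows from the coassociativity recorded in the paper by the induction you sketch. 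Two small points of bookkeeping deserve a remark, both tied to the paper's convention $\tilde{\Delta}^{(0)}=\rho$ rather than $\Id$. First, for $n=1$ the $i=1$ case of your generalised coassociativity reads $\tilde{\Delta}\circ\rho=\tilde{\Delta}$, which is not literally the definition of $\tilde{\Delta}^{(1)}$ but a one-line check using $\tilde{\Delta}(1)=0$, i.e. $\tilde{\Delta}(x-\e(x)1)=\tilde{\Delta}(x)$. Second, your preliminary claim that $\tilde{\Delta}^{(n-1)}(x)\in H_+^{\otimes n}$ is genuinely needed (the kernel characterisation $\Prim(H)=\ker\left(\tilde{\Delta}|_{H_+}\right)$ only makes sense there), and its base case is again $\tilde{\Delta}^{(0)}(x)=\rho(x)\in H_+$; with that noted, the induction from $\tilde{\Delta}(H)\subseteq H_+\otimes H_+$ goes through as you say.
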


\begin{Lemme}\label{lem:CQQM4}
	Let $H$ be a connected (as a coalgebra) bialgebra such that $I$ be a coideal such that $I\neq (0).$ Then, $I$ contains some primitive elements of $H$. 
\end{Lemme}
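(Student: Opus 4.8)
The plan is to run an induction on the coradical filtration, choosing an element of $I$ of least filtration degree and showing that degree must be $1$. First I would unpack the two pieces of information carried by the hypothesis that $I$ is a coideal. On the one hand $\e(I)=0$, so $I\subseteq H_+$ and in particular $I\cap H^{\leq 0}=I\cap\K 1=(0)$ (an element $\lambda 1\in I$ would give $\lambda=\e(\lambda 1)=0$). On the other hand $\Delta(I)\subseteq I\otimes H+H\otimes I$; subtracting $1\otimes x+x\otimes 1$ (both of which lie in $H\otimes I+I\otimes H$) yields the reduced form $\tilde\Delta(I)\subseteq I\otimes H+H\otimes I$, which is what I will actually use.

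Since $H$ is connected, every element of $I$ lies in some $H^{\leq n}$, so I may set $n$ to be the least integer with $I\cap H^{\leq n}\neq(0)$. The remark above forces $n\geq 1$, and minimality gives $I\cap H^{\leq n-1}=(0)$. Fix $0\neq x\in I\cap H^{\leq n}$. If $n=1$, then $x\in H^{\leq 1}\cap H_+$, hence $\tilde\Delta(x)=0$, so $x$ is a nonzero primitive element lying in $I$ and we are done. The entire content of the lemma is therefore to exclude $n\geq 2$.

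Suppose $n\geq 2$. Then $x\notin H^{\leq n-1}\supseteq H^{\leq 1}$, so $\tilde\Delta(x)\neq 0$. By Lemma~\ref{lem:coalgdecomp} together with coassociativity of $\tilde\Delta$ and $\tilde\Delta^{(n)}(x)=0$, one gets $\tilde\Delta(x)\in H^{\leq n-1}\otimes H^{\leq n-1}$; at the same time the coideal property gives $\tilde\Delta(x)\in I\otimes H+H\otimes I$. I would then close the argument with the following elementary tensor fact: if $U,I$ are subspaces of $H$ with $U\cap I=(0)$, then writing $H=U\oplus I\oplus R$ decomposes $H\otimes H$ as the direct sum of the nine blocks $X\otimes Y$ with $X,Y\in\{U,I,R\}$, and the block $U\otimes U$ never occurs in $I\otimes H+H\otimes I$, whence $(I\otimes H+H\otimes I)\cap(U\otimes U)=(0)$. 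Applying this with $U=H^{\leq n-1}$ — where $U\cap I=(0)$ precisely by minimality of $n$ — forces $\tilde\Delta(x)=0$, contradicting $\tilde\Delta(x)\neq 0$. Hence $n=1$ and $I$ contains a nonzero primitive element.

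The main obstacle is the final step, and specifically the temptation to invoke the (false in general) identity $(I\otimes H+H\otimes I)\cap(U\otimes U)=(I\cap U)\otimes U+U\otimes(I\cap U)$. The clean way around this is to use that in our situation $I\cap U=(0)$, where the explicit direct-sum decomposition $H=U\oplus I\oplus R$ makes the intersection manifestly trivial. Everything else — the filtration bookkeeping and the passage from $\Delta$ to $\tilde\Delta$ — is routine.
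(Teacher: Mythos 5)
Your proof is correct, and there is in fact no in-paper argument to compare it against: Lemma~\ref{lem:CQQM4} is one of the ``classical lemmas'' the paper states without proof just before the \CQMM{} theorem, so you have supplied what the paper omits. Your argument is the standard minimal-degree one and every step checks out: $\e(I)=0$ rules out $I\cap H^{\leq 0}\neq(0)$; the passage from $\Delta$ to $\tilde{\Delta}$ stays inside $I\otimes H+H\otimes I$ because $x\otimes 1$ and $1\otimes x$ already lie there for $x\in I$; the inclusion $\tilde{\Delta}\left(H^{\leq n}\right)\subseteq \sum_{k=1}^{n-1}H^{\leq k}\otimes H^{\leq n-k}$ that you extract from Lemma~\ref{lem:coalgdecomp} and coassociativity is precisely the fact the paper itself invokes, also without proof, inside the proof of Proposition~\ref{Prop:graded=PH}; and your block decomposition $H=U\oplus I\oplus R$, available exactly because minimality of $n$ gives $U\cap I=(0)$ for $U=H^{\leq n-1}$, does show $(I\otimes H+H\otimes I)\cap(U\otimes U)=(0)$, since that sum is the direct sum of the five blocks having an $I$ factor while $U\otimes U$ is a sixth, independent block. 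One small correction that does not affect your proof: the identity $(I\otimes H+H\otimes I)\cap(U\otimes U)=(I\cap U)\otimes U+U\otimes(I\cap U)$ is not ``false in general'' --- it holds for arbitrary subspaces (set $P=I\cap U$, split $I=P\oplus I_1$ and $U=P\oplus U_1$, extend to a decomposition of $H$, and apply $\pi\otimes\pi$ where $\pi$ projects onto $U_1$ and kills $I$). Since you only use the trivial-intersection case, which you verify directly, your argument stands exactly as written.
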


\subsection{The \CQMM{} theorem in the \PH{} case}
 
\begin{Cor}[\CQMM{} for \PH{} algebras]\label{Cor:CQQM}
	Suppose $\car(\K)=0$ and let $(H,\lhd)$ be a  cocommutative connected (as a coalgebra) \PH{} algebra. Then, $(H,\lhd)$ is isomorphic to the \PH{} algebra $\bigl(\U{\Prim(H)},\lhd\bigr)$ built in theorem \ref{thm:extendUg}.
\end{Cor}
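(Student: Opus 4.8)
The plan is to realise the inclusion $\Prim(H)\hookrightarrow H$ at the level of enveloping algebras as a Post-Hopf morphism, and then to run the classical CQMM argument to upgrade it to an isomorphism. Write $\g=\Prim(H)$. By theorem~\ref{thm:PrimPL}, $(\g,\libra{}{},\rhd)$ is a right \PL{} algebra, and by corollary~\ref{cor:UhPH} the enveloping algebra $\U{\g}$ carries the extended \PH{} structure of theorem~\ref{thm:extendUg}. The goal is to produce a \PH{} isomorphism $\Phi:\U{\g}\rightarrow H$.

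First I would apply theorem~\ref{thm:UnivPH} with $A=H$ and $A_+=\ker(\e)$, so that $A=\langle 1\rangle\oplus A_+$ since $H$ is connected (lemma~\ref{lem:coalgdecomp}), and with $\phi:\g\hookrightarrow H$ the inclusion. The two compatibility conditions on $\phi$ hold tautologically: the bracket on $\g$ is $xy-yx$ and the Post-Lie product is the restriction of $\rhd$. The unital extension of $\rhd$ prescribed in theorem~\ref{thm:UnivPH} coincides with the genuine $\rhd$ of $H$ by lemma~\ref{lem:unit}, since $1\rhd x=\e(x)1=0$ for $x\in A_+$ and $x\rhd 1=x$. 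Finally, hypotheses \eqref{hyp1} and \eqref{hyp2} are exactly the \PH{} relations \eqref{RPH1} and \eqref{RPH2} evaluated at a primitive $z$: using $\Delta(z)=1\otimes z+z\otimes 1$ and lemma~\ref{lem:unit}, \eqref{RPH1} collapses to $(x\cdot y)\rhd z=(x\rhd z)\cdot y+x\cdot(y\rhd z)$ and \eqref{RPH2} rearranges to $x\rhd(y\cdot z)=(x\rhd y)\rhd z-x\rhd(y\rhd z)$. Theorem~\ref{thm:UnivPH} then yields a unique algebra morphism $\Phi:\U{\g}\rightarrow H$ with $\Phi|_{\g}=\phi$ and, via \eqref{PPL0}, satisfying $\Phi(f\rhd g)=\Phi(f)\rhd\Phi(g)$; that is, $\Phi$ is a morphism of \PH{} algebras.

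Next I would check that $\Phi$ is a bialgebra morphism and then that it is bijective. Both $\Delta_H\circ\Phi$ and $(\Phi\otimes\Phi)\circ\Delta_{\U{\g}}$ are algebra morphisms $\U{\g}\rightarrow H\otimes H$ that agree on the generators $\g$, because the elements of $\g$ are primitive in both $\U{\g}$ and $H$; hence they coincide, and compatibility with $\e$ is immediate. Bijectivity is the classical CQMM statement for the cocommutative connected Hopf algebra $H$ in characteristic zero. For surjectivity I would show by induction on the coradical filtration that $H$ is generated as an algebra by $\g$: for $x\in H^{\leq n}$ with $\e(x)=0$, lemma~\ref{lem:CQQM2} gives $\tilde{\Delta}^{(n-1)}(x)\in\g^{\otimes n}$, which is symmetric by cocommutativity; subtracting $\frac{1}{n!}$ times the corresponding sum of products of primitives (whose iterated reduced coproduct is computed by lemma~\ref{lem:CQQM1}, and where division by $n!$ is licit in characteristic zero) lands $x$ in $H^{\leq n-1}+\ima(\Phi)$, and the base cases $H^{\leq 0}$ and $H^{\leq 1}$ lie in $\ima(\Phi)$. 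For injectivity, $\ker(\Phi)$ is a coideal; were it nonzero, lemma~\ref{lem:CQQM4} would produce a nonzero element of $\ker(\Phi)\cap\Prim(\U{\g})$, but $\Prim(\U{\g})=\g$ by \PBW{} in characteristic zero and $\Phi|_{\g}$ is the injective inclusion, a contradiction. Thus $\Phi$ is a bijective \PH{} morphism, hence a \PH{} isomorphism.

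The main obstacle I anticipate is not the classical bijectivity argument, which is standard, but the careful bookkeeping that reduces \eqref{hyp1}--\eqref{hyp2} of theorem~\ref{thm:UnivPH} exactly to \eqref{RPH1}--\eqref{RPH2} on primitive elements, together with the matching of the ad hoc unital extension of $\rhd$ in that theorem with the true $\rhd$ of $H$ from lemma~\ref{lem:unit}. This verification is precisely what guarantees that the classical Hopf isomorphism $\Phi$ is simultaneously compatible with the \PH{} operation, so that it becomes an isomorphism of \PH{} algebras.
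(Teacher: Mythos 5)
Your proposal is correct and follows essentially the same route as the paper's proof: apply theorem~\ref{thm:UnivPH} to the inclusion $\phi:\Prim(H)\hookrightarrow H$ (verifying \eqref{hyp1}--\eqref{hyp2} via \eqref{RPH1}--\eqref{RPH2} on primitives and lemma~\ref{lem:unit}), prove surjectivity by the coradical-filtration induction showing $H$ is generated by its primitives (lemmas~\ref{lem:CQQM1} and~\ref{lem:CQQM2}, cocommutativity, $\car(\K)=0$), and prove injectivity via lemma~\ref{lem:CQQM4} together with the injectivity of $\phi$ on $\Prim(\U{\Prim(H)})=\Prim(H)$. Your write-up is in places even more explicit than the paper's (the reduction of \eqref{hyp1}--\eqref{hyp2} to the \PH{} axioms at a primitive argument, and the generators-agreement check that $\Phi$ is a coalgebra morphism), but these are fillings-in of details rather than a different argument.
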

This proof is quite similar to the classic \CQMM{} theorem where we use the coradical filtration. One may try to get the best version of it, appearing at theorem 4.5.1 in \cite{CartierPatras}, in the case of \PH{} algebras. 
\begin{proof}
	Let $(H,\lhd)$ be a cocommutative \PH{} algebra.
	First, we show that $H$ is generated by its primitive elements. Let $H'$ be the sub-bialgebra of $H$ generated by $\Prim(H)$. Let $x\in H\setminus\{0\}$.  As $H$ is connected as a coalgebra, for $k=\deg(x)$, we have $\tilde{\Delta}^{(k)}(x)=0.$
	We show by induction on $\deg(x)$ that $x\in H'$.
	
	\begin{description}
		\item[Initialization:] if $\deg(x)=0$, as $\tilde{\Delta}^{(0)}(x)=x-\e(x)1$ then $x\in\K\cdot 1$, so $x\in H'$. If $\deg(x)=1$ then $x$ is primitive, so $x\in H'$.
		\item[Heredity:] suppose there exists $k\geq 1$ such that $H^{\leq k} \subseteq H'$. Let $\deg(x)=k+1$, then by lemma~\ref{lem:CQQM2}, there exists $I$ a finite subset such that:
		\[
		\tilde{\Delta}^{(k)}(x)=\sum_{i\in I} x_1^{(i)}\otimes \dots \otimes x_{k+1}^{(i)},
		\] 
		where for all $m\in\IEM{1}{k+2}$ and $i\in I, x_m^{(i)}\in\Prim(H)$. As $H$ is cocommutative, the coproduct is invariant under the action of any element of the symmetric group. As a consequence, using $\car(\K)=0$:
		\[
		\tilde{\Delta}^{(k)}(x)=\frac{1}{(k+1)!}\sum_{\sigma\in S_{k+1}} \sum_{i\in I} x_{\sigma(1)}^{(i)}\otimes \dots \otimes x_{\sigma(k+1)}^{(i)}.
		\] 
		Thanks to lemma~\ref{lem:CQQM1}, we get:
		\[
		\tilde{\Delta}^{(k)}(x)=\tilde{\Delta}^{(k)}\left(\frac{1}{(k+1)!}\sum_{i\in I} x_1^{(i)}\cdot\ldots \cdot x_{k+1}^{(i)}\right).
		\]
		Then:
		\[
		\deg\left(x-\frac{1}{(k+1)!}\sum_{i\in I} x_1^{(i)}\cdot\ldots \cdot x_{k+1}^{(i)}\right)\leq k.
		\]
		By the induction hypothesis, $x\in H'$ because $\displaystyle\sum_{i\in I} x_1^{(i)}\cdot\ldots \cdot x_{k+1}^{(i)}\in H'.$ Consequently, this proves that $H$ is generated by its primitives elements.
	\end{description} 
	
	By theorem~\ref{thm:PrimPL}, $(\Prim(H),[,],\lhd)$ is a \PL{} algebra and consider $H$ as an associative algebra endowed with a linear map ${\lhd:H\otimes H \rightarrow H}.$
	We put ${\phi:\Prim(H)\rightarrow H}$ the injection of $\Prim(H)$ into $H$. The map $\phi$ satisfies for any $x,y\in\Prim(H)$:
	\begin{equation*}
		\phi([x,y])=xy-yx=\phi(x)\phi(y)-\phi(y)\phi(x) \text{ and }
		\phi(x\lhd y)=x\lhd y=\phi(x)\lhd \phi(y).
	\end{equation*}
	Moreover $\phi$ satisfies equations~$\eqref{hyp1}$ and $\eqref{hyp2}$ as any element in $\ima(\phi)$ is primitive in the \PH{} algebra $H$. So, applying theorem~\ref{thm:UnivPH} with $\lhdp \coloneqq \lhd$, there exists a unique algebra morphism ${\Phi:\U{\Prim(H)}\rightarrow H}$ satisfying equations  $\eqref{PPL0},\eqref{PPL1}, \eqref{PPL2}$ and $\Phi|_{\Prim(H)}=\phi$. However, $\Phi$ is also a coalgebra morphism as it sends primitive elements of $\U{\Prim(H)}$ on primitives elements of $H$ and for all $n\in\N, {\Phi\left(H^{\leq n}\right)\subseteq H^{\leq n}}$. Therefore, $\ima(\Phi)$ is a sub-\PH{} algebra of $H$ as equations~$\eqref{PPL1}$ and $\eqref{PPL2}$ become the relations~$\eqref{RPH1}$ and $\eqref{RPH2}$ in $\ima(\Phi)$. The existence of an inverse for the multiplication in the convolution algebra is a consequence of proposition~\ref{Prop:OptiLRPH} as $\ima(\Phi)$ is connected as a coalgebra.
	
	Note that $\ima(\Phi)$ contains the algebra generated by $\Prim(H)$. So $\ima(\Phi)=H$. Now, let us suppose by contradiction that $\Phi$ is not injective. Therefore, its kernel is a non-empty coideal in a connected bialgebra. Therefore by lemma \ref{lem:CQQM4}, $\ker(\Phi)$ contains primitive elements. So $\phi$ can not be injective. This gives a contradiction.
	
	Thus, $\Phi$ is an isomorphism of \PH{} algebras.	
\end{proof}
\begin{Rq}
In the particular case where $\Prim(H)$ is an abelian Lie algebra, one gets a version for Pre-Hopf algebras. As a consequence, we get theorem~$2.12$ from the work of J-M.~{Oudom} and D.~{Guin} \cite{OudomGuin} (with the additional hypothesis $\car(\K)=0$) with an isomorphism of Pre-Hopf algebra as $\Prim(S(\h))=\h$ for any Pre-Lie algebra $\h$.  
\end{Rq}

\subsection{A graded version}

\subsection{Reminders and notations}

Let us remind the definition of graded algebras and introduce some notations for the graded case.
\begin{defi}
	Let $(H,m,1_H,\Delta,\e)$ be a Hopf algebra with antipode $S$. It is said to be \emph{graded} if there exists a sequence $(H_n)_{n\in\N}$ of finite dimensional subspaces of $H$ such that:
	\begin{multicols}{2}
		\begin{enumerate}
			\item  $\displaystyle H = \bigoplus_{n\in\N} H_n$;\label{item:graded_vs}
			\item $\forall (m,n)\in\N^2, m\left(H_n\otimes H_m\right)\subseteq H_{n+m};$ \label{item:graded_alg1}
			\item 	$1_H\in H_0$;\label{item:graded_alg2}
			\item  $\displaystyle \forall n\in\N, \Delta\left(H_{n}\right)\subseteq \sum_{m=0}^n H_{n-m}\otimes H_{m}.$	\label{item:graded_big1}
			\item 	$\forall n\in\N\setminus\{0\}, \e(H_n)=(0);$\label{item:graded_big2}
			\item  $\forall n\in\N, S(H_n)\subseteq H_n$;\label{item:graded_Hopf}
		\end{enumerate}
	\end{multicols} 
	Moreover, $H$ is said \emph{connected} if $\dim(H_0)=1.$ An element $v\in H\setminus\{0\}$ in a graded space is said \emph{homogeneous} if there exists $n\in\N$ such that $v\in H_n$. If $v$ is homogeneous, we put $\deg(v)\coloneqq n$. Condition~\ref{item:graded_Hopf} is superfluous when $H$ is connected.
	\end{defi}
	\begin{Rq}
		A vector space $V$ is said \emph{graded}, if item~\ref{item:graded_vs} is satisfied. An algebra (or Lie algebra) $A$ is \emph{graded} if items~\ref{item:graded_alg1},\ref{item:graded_alg2} and \ref{item:graded_vs} are satisfied. A bialgebra is \emph{graded} if all items expected item~\ref{item:graded_Hopf} are satisfied. 
		The notions of connectedness is unchanged.
	\end{Rq}
	
	Let $V$ be a graded vector space and $\h$ be a graded Lie algebra.
	\begin{itemize}
		\item let $(\h,\libra{}{},\lhd)$ be a \PL{} algebra graded as a Lie algebra. It is called \emph{graded} if for any $n,m\in\N, \h_n \lhd \h_m \subseteq \h_{n+m}$;
		\item $T(V)$ is graded by $\displaystyle \bigoplus_{n\in\N} T(V)_n$ where $T(V)_n$ is the vector space generated by words such that the sum of the degrees of its letters is equal to $n$; 
		\item $\U{\h}$ inherits the grading of $T(\h)$ and they are both graded Hopf algebras;
		\item for any $k,l\in\N\setminus\{0\}$ we put $T(\h)^k_n\coloneqq \K\h^{\otimes k}\cap T(\h)_{n}.$ It defines a \emph{double grading} of $T(V)$. In particular, for any $k,l \in\N\setminus \{0\}$:
		
		\begin{minipage}{0.45\textwidth}
			\begin{itemize}
				\item $T(\h)^k_n$ is finite-dimensional;
				\item $T(\h)=\displaystyle \bigoplus_{k,n\in\N^2} T(\h)^k_n$;
			\end{itemize}
		\end{minipage}	
		\begin{minipage}{0.55\textwidth}
			\begin{itemize}
			\item $T(\h)^k_n\cdot T(\h)^l_m\subseteq T(\h)^{k+l}_{n+m}$;
			\item $\displaystyle\Delta\left(T(\h)^k_n\right)\subseteq \sum_{l=1}^k \sum_{m=1}^n T(\h)^l_m \otimes T(\h)^{k-l}_{n-m}$.
		\end{itemize}
		\end{minipage}
		\item for any $k,l\in\N\setminus\{0\}, \U{\h}^k_n\coloneqq \U{\h}^{\leq k}\cap \U{\h}_{n}$ defines a double grading of $\U{\h}$.
	\end{itemize}

\subsubsection{Preliminary lemmas and definitions}

\begin{defi}[graded \PH{} algebras] 
	Let $(H,\lhd)$ be a \PH{} algebra. We say it is \emph{graded} if $(H,m,1,\Delta,\e)$ is a graded bialgebra and for all $m,n\in\N,H_n\lhd H_m\subseteq H_{n+m}.$
\end{defi}

\begin{Lemme}\label{lem:Thgrade}
	Let $(\h,\libra{}{},\lhd)$ be a graded \PL{} algebra such that $\h_0=(0)$. Then:
	\[
	\forall (n,m,k)\in\N^3, T(\h)^k_n \lhd T(\h)_m\subseteq T(\h)^{k}_{n+m}.
	\]
\end{Lemme}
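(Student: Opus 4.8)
The plan is to reduce everything to words in homogeneous elements of $\h$ and then induct on the word-length of the right-hand factor, establishing simultaneously that $*$ preserves the tensor length of its \emph{left} argument and adds the two degrees. By linearity of $*$ it suffices to treat the case where $g=y_1\cdots y_\ell\in T(\h)_m$ is a single word with each $y_i\in\h$ homogeneous and $\sum_i\deg(y_i)=m$; the claim becomes that $f*g\in T(\h)^k_{n+m}$ for every $f\in T(\h)^k_n$. I would run the induction on $\ell$, keeping $f$, $k$ and $n$ arbitrary, so that the inductive hypothesis is available for \emph{all} left factors at once. The only fact about $\h$ itself that I need is the compatibility of its grading with the magmatic product, namely $\h_i*\h_j\subseteq\h_{i+j}$ (part of being a graded \PL{} algebra); the standing hypothesis $\h_0=(0)$ serves to make $(T(\h)^k_n)$ an honest double grading.

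For the base cases: if $\ell=0$ then $g=1$ and $f*1=f\in T(\h)^k_{n}=T(\h)^k_{n+0}$. If $\ell=1$, so $g=y\in\h$ is homogeneous of degree $m$, I would compute $(f_1\cdots f_k)*y$ by a short secondary induction on $k$ using the relation $(fg)*h=\sum(f*h^{(1)})(g*h^{(2)})$ of Proposition~\ref{Prop:extendTg} with $h=y$ primitive, so that $\Delta(y)=1\otimes y+y\otimes 1$. This yields $(f_1\cdots f_k)*y=\sum_{i=1}^k f_1\cdots(f_i*y)\cdots f_k$, and since $f_i*y\in\h$ lies in $\h_{\deg(f_i)+m}$ by the graded \PL{} compatibility, each summand is a word of length $k$ and total degree $n+m$; hence $f*y\in T(\h)^k_{n+m}$.

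For the inductive step I would write $g=g'y$ with $y\in\h$ homogeneous of degree $d$ and $g'$ a word of length $\ell-1$ and degree $m-d$, then use the defining relation $f*(g'y)=(f*g')*y-f*(g'*y)$. In the first summand the inductive hypothesis gives $f*g'\in T(\h)^k_{n+m-d}$, and applying the $\ell=1$ case to the left factor $f*g'$ and the letter $y$ puts $(f*g')*y$ in $T(\h)^k_{n+m}$. In the second summand I first apply the $\ell=1$ case to $g'$ (as left factor) and $y$, giving $g'*y\in T(\h)^{\ell-1}_m$, i.e.\ a linear combination of words of length $\ell-1$; the inductive hypothesis applied to each of these as right factor then yields $f*(g'*y)\in T(\h)^k_{n+m}$. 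Both summands lie in $T(\h)^k_{n+m}$, which closes the induction.

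The step demanding the most care is this second summand: there the conclusion is used "in the other direction", with $g'$ in the role of the left argument, so it is essential that the statement be proved uniformly over all left factors and all pairs $(k,n)$ rather than for one fixed $f$. Everything else is bookkeeping of the two gradings through the relations of Proposition~\ref{Prop:extendTg}, all of which respect the concatenation product and the coproduct and hence both gradings.
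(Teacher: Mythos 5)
Your proof is correct and takes essentially the same route as the paper's: an induction on the word-length of the right factor, kept uniform in the left factor and in $(k,n)$, with the length-one base case handled by a secondary induction on $k$ via the derivation identity $(fg)*y=(f*y)g+f(g*y)$ for primitive $y\in\h$, and the inductive step splitting $f*(g'y)=(f*g')*y-f*(g'*y)$ and treating the second summand by first applying the length-one case to $g'*y$ — exactly the paper's argument. The only cosmetic difference is that the paper states the induction hypothesis directly for the doubly graded pieces $T(\h)^k_n*T(\h)^l_m\subseteq T(\h)^k_{n+m}$, but the induction parameter and all degree bookkeeping coincide with yours.
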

\begin{proof}
	In all this proof, we use intensively the construction from proposition~\ref{Prop:extendTg}.
We will prove by induction over $l$ the following property:
\[
\forall (n,m,k,l)\in\N^3, T(\h)_n^k\lhd T(\h)_m^l\subseteq T(\h)^k_{n+m}.
\]
\begin{description}
	\item[Initialization:] for $l=0$ it is easy as $T(\h)^{0}_0=(1)$. For $l=1$, one needs to show:
	\[
	\forall (n,m)\in\N^2,\forall k\in\N,T(\h)_n^k\lhd \h_m\subseteq T(\h)^k_{n+m}.
	\]
	 There, we use an induction over $k$. For $k=0$, it is obvious as $T(\h)^0=(1)$. For $k=1$, as $\h_0=(0)$,  for any $n\in\N,T(\h)^1_n=\h_n$. So, we need to prove:
	 \[
	 \forall (n,m)\in \N^2, \h_n\lhd \h_m\subseteq \h_{n+m}.
	 \] This is true as $\h$ is supposed to be a graded \PL{} algebra.
	 
	 Now, we suppose there exists $1\leq k$ such that for all $k'\leq k$, we have:
	 \begin{equation}
	 	\forall (n,m)\in\N^2,T(\h)_n^{k'}\lhd \h_m\subseteq T(\h)^{k'}_{n+m}. \tag{IH1} \label{IH1}
	 \end{equation}
	 Let $(n,m)\in\N^2$. We prove this result for $k+1$. Let $f\in T(\h)^{k+1}_n$ and $h\in\h_m$. Then, by linearity, we can suppose $f$ is a word. Hence, there exist $f'\in T(\h)^k_{n'}$ and $f_0\in \h_{n''}$ such that $f=f'f_0$ where $n=n'+n''$ with $1\leq n'$ and  $1\leq n''$. So, by construction:
	 \[
	 f\lhd h=(f'f_0)\lhd h=(f'\lhd h)f_0+f'(f_0\lhd h).
	 \]
	 By the induction hypothesis $\eqref{IH1}$, we have $f'\lhd h\in T(\h)^k_{n'+m}$ and $f_0\lhd h\in\h_{n''+m}$. Consequently:
	 \[
	 f\lhd h=\underbrace{(f'\lhd h)f_0}_{\in T(\h)^{k+1}_{n+m}}+\underbrace{f'(f_0\lhd h)}_{\in T(\h)^{k+1}_{n+m}}
	 \]
	 So the induction is initialized.
	\item[Heredity:] suppose there exists $1\leq l$ such that for all $l'\leq l$:
	\begin{equation}
		\forall (n,m)\in\N^2,\forall k\in\N, T(\h)_n^k\lhd T(\h)_m^{l'}\subseteq T(\h)^k_{n+m}. \tag{IH2} \label{IH2}
	\end{equation}
	We prove this result for $l+1$. Let $(n,m,k)\in\N^3,f\in T(\h)^k_{n}$ and $g\in T(\h)^{l+1}_m$. Then, by linearity, we can suppose $g$ is a word. Put $g=g'g_0$ where $g'\in T(\h)^l_{m'}$ and $g_0\in h_{m''}$ where $1\leq m', 1\leq m''$ and $m=m'+m''$. Consequently:
	\[
	f\lhd g=f\lhd (g'g_0)=(f\lhd g')\lhd g_0-f\lhd (g'\lhd g_0).
	\]
	By the induction hypothesis $\eqref{IH2}, f\lhd g'\in T(\h)^k_{n+m'}$ and $g'\lhd g_0\in T(\h)^l_{n+m''}$. So, using once more $\eqref{IH2}$:
	\[
	f\lhd g=\underbrace{(f\lhd g')\lhd g_0}_{\in T(\h)^{k}_{n+m}}-\underbrace{f\lhd (g'\lhd g_0)}_{\in T(\h)^k_{n+m}}. \qedhere
	\]
\end{description}
\end{proof}

As a consequence of lemma~\ref{lem:Thgrade}, we have
\begin{Prop}\label{Prop:T(V)PH}
	Let $(V,\lhd )$ be a graded magmatic algebra with $V_0=(0)$. Then, $(T(V),\cdot,\Delta, \lhd )$ is a cocommutative connected and graded right \PH{} algebra, using the extension of $\lhd $ in proposition~\ref{Prop:extendTg}.
\end{Prop}
\begin{proof}
	By proposition~\ref{Prop:extendTg}, we know that $\lhd $ is a coalgebra morphism and \eqref{RPH1} and \eqref{RPH2} hold. We grade $T(V)$ with $\left(T(V)_n\right)_{n\in\N}$. As $V_0=(0), \dim(T(V)_0)=1.$
	By lemma~\ref{lem:Thgrade}, all the hypotheses of proposition~\ref{Prop:graded=PH} are satisfied. So the proposition is true.
\end{proof}
\begin{Lemme}\label{lem:Uhgrade}
	Let $(\h,\libra{}{},\lhd)$ be a graded \PL{} algebra such that $\h_0=(0)$. Then:
	\[
	\forall (n,m,k)\in\N^3, \U{\h}^{\leq k}_n \lhd \U{\h}_m\subseteq \U{\h}^{\leq k}_{n+m}.
	\]
\end{Lemme}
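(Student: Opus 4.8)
The plan is to deduce the statement from Lemma~\ref{lem:Thgrade} by transporting everything through the quotient map $\pi\colon T(\h)\to\U{\h}=\faktor{T(\h)}{I}$. Three compatibilities of $\pi$ make this possible. First, as already recorded in definition~\ref{def:gradT}, $I$ is a homogeneous ideal for the degree grading (when $x,y$ are homogeneous the relator $xy-yx-\libra{x}{y}$ lies entirely in degree $\deg(x)+\deg(y)$, since $\h$ is a graded Lie algebra), so $\pi$ is a graded map and $\pi\left(T(\h)_m\right)=\U{\h}_m$. Second, by theorem~\ref{thm:extendUg} the extension of $*$ to $\U{\h}$ is the one induced from the extension on $T(\h)$, hence $\pi(f*g)=\pi(f)*\pi(g)$ for all $f,g\in T(\h)$. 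Third, by the very definition of the coradical filtration of $\U{\h}$ as the span of products of at most $k$ elements of $\h$, one has $\U{\h}^{\leq k}=\pi\left(\bigoplus_{j=0}^k T(\h)^j\right)$.

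The one slightly delicate point I would isolate is a \emph{graded lifting} statement, namely
\[
\U{\h}^{\leq k}_n=\pi\left(\bigoplus_{j=0}^k T(\h)^j_n\right).
\]
The inclusion $\supseteq$ is immediate. For $\subseteq$, take $\bar f\in\U{\h}^{\leq k}\cap\U{\h}_n$ and lift it to some $f\in V\coloneqq\bigoplus_{j\leq k}T(\h)^j$. Since $V$ is a graded subspace, decompose $f=\sum_d f_d$ with $f_d\in V\cap T(\h)_d$; applying the graded map $\pi$ and comparing homogeneous components, the homogeneity of $\bar f$ forces $\pi(f_d)=0$ for $d\neq n$ and $\bar f=\pi(f_n)$, with $f_n\in\bigoplus_{j\leq k}T(\h)^j_n$. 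This is exactly the compatibility of the word-length filtration with the degree grading through $\pi$, and it is where the homogeneity of $I$ is really used.

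With these preliminaries the conclusion is a short computation with no new estimate on $*$. Given $\bar f\in\U{\h}^{\leq k}_n$ and $\bar g\in\U{\h}_m$, lift them to $f\in\bigoplus_{j\leq k}T(\h)^j_n$ and $g\in T(\h)_m$; decomposing $f=\sum_{j\leq k}f^{(j)}$ with $f^{(j)}\in T(\h)^j_n$ and using bilinearity of $*$, it suffices to treat a single $f\in T(\h)^j_n$ with $j\leq k$. Lemma~\ref{lem:Thgrade} then gives $f*g\in T(\h)^j_{n+m}$, and projecting yields
\[
\bar f*\bar g=\pi(f*g)\in\pi\left(T(\h)^j_{n+m}\right)\subseteq\pi\left(\bigoplus_{i\leq k}T(\h)^i_{n+m}\right)=\U{\h}^{\leq k}_{n+m},
\]
which is the desired inclusion. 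Thus the entire content sits in Lemma~\ref{lem:Thgrade} together with the graded lift, the latter being the main (and only) obstacle to watch.
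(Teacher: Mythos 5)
Your proof is correct and takes essentially the same route as the paper: the paper's own proof is precisely the remark that Lemma~\ref{lem:Thgrade} descends through the quotient $\pi:T(\h)\rightarrow\U{\h}$, because $I$ is homogeneous for the degree grading while the word-length grading only survives as a filtration. You have simply written out in full the compatibilities (notably the graded lifting $\U{\h}^{\leq k}_n=\pi\bigl(\bigoplus_{j\leq k}T(\h)^j_n\bigr)$, which indeed rests on the homogeneity of $I$) that the paper's two-sentence proof leaves implicit.
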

\begin{proof}
	This lemma is a consequence of taking the quotient of $T(\h)$ by $I$. This ideal is homogeneous for the grading by the sum of the degrees of letters. But this not the case for the length of letters, which becomes a filtration. 
\end{proof}

The lemma \ref{lem:Uhgrade} and the corollary \ref{cor:UhPH} implies:
\begin{Cor}
	Let $(\h,\libra{}{},\lhd)$ be a graded \PL{} algebra. Then, $(\U{\h},\lhd)$ is a graded \PH{} algebra where $\lhd$ is the extension of theorem \ref{thm:extendUg} and the gradation is given by $\bigl(\U{\h}_n\bigr)_{n\in\N}$.
\end{Cor}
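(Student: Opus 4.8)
The plan is to read the statement off the two cited results with only some grading bookkeeping, since most of the work has already been done. By Corollary~\ref{cor:UhPH}, the pair $(\U{\h},\rhd)$ is already a \PH{} algebra, so the coalgebra-morphism property of $\rhd$, the relations \eqref{RPH1} and \eqref{RPH2}, and the invertibility of the right multiplication in the convolution algebra all hold and need not be re-established. What is genuinely new here is that this \PH{} structure is \emph{graded} for the gradation $(\U{\h}_n)_{n\in\N}$ coming from the graded Lie algebra $\h$. First I would recall, as in Definition~\ref{def:gradT}, that $T(\h)$ is graded by the sum of the degrees of the letters and that the defining ideal $I=\langle xy-yx-\libra{x}{y}\rangle$ is homogeneous; hence $\U{\h}=\faktor{T(\h)}{I}$ inherits this grading, and $(\U{\h},m,1,\Delta,\e)$ is a graded bialgebra (concatenation adds degrees, and $\Delta$, being an algebra morphism sending each $x\in\h_n$ to $x\otimes 1+1\otimes x$, is degree-preserving).

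It then remains to verify the single compatibility condition from the definition of a graded \PH{} algebra, namely
\[
\forall (n,m)\in\N^2,\qquad \U{\h}_n\rhd\U{\h}_m\subseteq\U{\h}_{n+m}.
\]
This is exactly where Lemma~\ref{lem:Uhgrade} enters. That lemma provides, for every $k$, the finer inclusion $\U{\h}^{\leq k}_n*\U{\h}_m\subseteq\U{\h}^{\leq k}_{n+m}$, where the magmatic product $*$ of Theorem~\ref{thm:extendUg} is precisely the \PH{} product $\rhd$. The observation that makes the passage from the filtered statement to the graded one work is that the length filtration exhausts $\U{\h}$, so that intersecting with the degree-$n$ component gives $\U{\h}_n=\sum_{k\in\N}\U{\h}^{\leq k}_n$. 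Applying Lemma~\ref{lem:Uhgrade} to each summand and summing over $k$ then yields
\[
\U{\h}_n\rhd\U{\h}_m=\Big(\sum_{k}\U{\h}^{\leq k}_n\Big)\rhd\U{\h}_m\subseteq\sum_{k}\U{\h}^{\leq k}_{n+m}\subseteq\U{\h}_{n+m},
\]
which is the desired inclusion and completes the verification.

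I expect the only delicate point to be the grading bookkeeping rather than any new algebraic identity. One must be careful to use the \emph{degree} grading descending from $\h$ (Definition~\ref{def:gradT}) and not confuse it with the length filtration, and to ensure the finite-dimensionality required by the definition of a graded \PH{} algebra actually holds; this is what forces the standing hypothesis $\h_0=(0)$ already assumed in Lemma~\ref{lem:Uhgrade} (otherwise $\U{\h}_0$ need not be finite-dimensional and the union $\sum_k\U{\h}^{\leq k}_n$ need not stabilize). A final routine check is the identification of the symbol $*$ of Theorem~\ref{thm:extendUg} with $\rhd$, so that Lemma~\ref{lem:Uhgrade} is literally a statement about the \PH{} product.
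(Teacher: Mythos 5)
Your proof is correct and follows exactly the route the paper intends: the paper gives no written proof beyond the remark that Corollary~\ref{cor:UhPH} and Lemma~\ref{lem:Uhgrade} together imply the statement, and your argument is precisely the expansion of that remark (PH structure from Corollary~\ref{cor:UhPH}, grading compatibility by exhausting $\U{\h}_n$ with the length filtration and applying Lemma~\ref{lem:Uhgrade}). Your side observation that the hypothesis $\h_0=(0)$ from Lemma~\ref{lem:Uhgrade} is tacitly needed (both for that lemma and for finite-dimensionality of the components $\U{\h}_n$) is a legitimate point of care that the paper's statement leaves implicit.
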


\subsubsection{The theorem}

We deduce a graded version of corollary \ref{Cor:CQQM}:

\begin{Cor}[\CQMM{} for graded \PH{} algebras]\label{Cor:gradCQQM}
	Let $(H,\lhd)$ be a cocommutative connected graded \PH{} algebra and assume ${\car(\K)=0}$. Then, $(H,\lhd)$ is isomorphic to the graded \PH{} algebra $\bigl(\U{\Prim(H)},\lhd\bigr)$ built in theorem~\ref{thm:extendUg}.
\end{Cor}
\begin{proof}
	The proof is the same as the one of corollary \ref{Cor:CQQM}. To adapt it to the graded case, one just needs to notice the map $\Phi$ from the proof of corollary~\ref{Cor:CQQM} is also a homogeneous morphism of degree $0$ for the gradations.
\end{proof}
\section{Applications}
\subsection{The sentence \PH{} algebra}

We introduce a tensor algebra over another tensor algebra. It has for basis the set of words over an alphabet of non-empty words that we will call a \emph{sentence}.
The space $T\left(T(V)_+\right)$ is the space of sentences over $V$. We will call \emph{sentence product} the concatenation product from $T\left(T(V)_+\right)$ denoted by $|$ and its deshuffle coproduct by $\Delta$. Its unit is the empty sentence $1$ and its counit is $0$ on every non-empty sentence of the basis.

\begin{defi}
	Let $V$ be vector space and consider the space $T(T(V)_+)$. We define the linear map ${\lhd: T(T(V)_+)\otimes T(T(V)_+)\rightarrow T(T(V)_+)}$ for any $S=S_1|\dots|S_n$ and $W=W_1|\dots|W_k$ by
	 \[
	 	S\lhd W=\sum_{f:\IEM{1}{k}\hookrightarrow \IEM{1}{n}} \left(S_1\cdot W_{f^{-1}(\{1\})}\right)|\dots |\left(S_n \cdot W_{f^{-1}(\{n\})}\right) 
	 	\text{ where } W_{\emptyset}=1.
	 \]
	 This space is graded by the number of words in a sentence. 
\end{defi}

\begin{Eg}
	With two sentences with only one word $\lhd$ is the concatenation product.
	We give below different examples of $\lhd$ product computation:
	\begin{align*}
		u \lhd (a|b)&=0, \\
		(ab|bc) \lhd uv&=abuv|bc + ab|bcuv, \\
		(ab|bc|de) \lhd (uv|w)&=abuv|bcw|de+abuv|bc|dew+ab|bcuv|dew \\
		 &+ abw|bcuv|de+abw|bc|deuv+ab|bcw|deuv.
	\end{align*}
\end{Eg}

We show it is indeed a \PH{} algebra structure
\begin{Prop}
	Let $V$ be a vector space. Then, $\left(T(T(V)),m,1,\Delta, \e, \lhd\right)$ is a \PH{} algebra.
\end{Prop}
\begin{Not}
	Given $n\in\N^*,(w_i)_{i\in\IEM{1}{n}}\in T(V)_+ ^ n$ and $w=w_1|\dots|w_k\in T(T(V)_+), J\subseteq \IEM{1}{k}$:
	\[
	\prod^|_{i\in I} w_i \coloneqq w_1|w_2|\ldots | w_n \text{ and } w|_J \coloneqq \prod^|_{j\in J} w_j.
	\]
\end{Not}
\begin{proof}
	For this purpose, thanks to proposition~\ref{Prop:graded=PH}, one has to show $\lhd$ is a coalgebra morphism and relations of definition~\ref{defi:RPH} are satisfied.
	
	First, we show $\lhd$ is a morphism of coalgebra. Let $S=S_1|\dots|S_n$ and $W=W_1|\dots|W_k$ be two sentences. Hence:
	\begin{align*}
		\Delta(S)\lhd \Delta(W) &= \left(\sum_{I\subseteq \IEM{1}{n}} S_I \otimes S_{I^c}\right) \lhd \left(\sum_{J\subseteq \IEM{1}{k}} W_J \otimes W_{J^c}\right) \\
								&= \sum_{I \subseteq \IEM{1}{n}} \sum_{J \subseteq \IEM{1}{k}} \sum_{\substack{f_1:J\hookrightarrow I \\ f_2:J^c\hookrightarrow I^c }} \prod_{i \in I}^{|} \left( S_i \cdot W_{f_1^{-1}(\{i\})} \right) \otimes \prod_{i \in I^c}^{|} \left( S_i \cdot W_{f_2^{-1}(\{i\})} \right) \\
								&= \sum_{I \subseteq \IEM{1}{n}} \sum_{f:\IEM{1}{k}\hookrightarrow \IEM{1}{n}} \left( S_i \cdot W_{f^{-1}(\{i\})} \right) \otimes \prod_{i \in I^c}^{|} \left( S_i \cdot W_{f^{-1}(\{i\})} \right) \\
								&= \sum_{f:\IEM{1}{k}\hookrightarrow \IEM{1}{n}} \Delta\left(S_1\cdot W_{f^{-1}(\{1\})}|\dots | S_n\cdot W_{f^{-1}(\{n\})}\right) \\
								&= \Delta(S\lhd W).
	\end{align*}
	
	Second, let us consider $T=T_1|\dots | T_m$. We show relations~\eqref{RPH1},\eqref{RPH2} from definition~\ref{defi:RPH}:
	\begin{align*}
		(S|T)\lhd W &= S_1|\dots | S_n| T_1 | \dots | T_m \lhd W_1 | \dots | W_k \\
					&=  \sum_{I \subseteq \IEM{1}{k}} \sum_{\substack{f_1: I \hookrightarrow \IEM{1}{n} \\ f_2: I^c \hookrightarrow \IEM{n+1}{n+m}}} \left.\left( \prod_{i \in \IEM{1}{k}}^| S_i \cdot W_{f_1^{-1}(\{i\})}\right) \middle| \left(\prod_{i \in \IEM{n+1}{n+m}}^| T_i\cdot W_{f_2^{-1}(\{i\})} \right)\right. \\
					&= \sum_{I \subseteq \IEM{1}{k}} \left(S \lhd W_I\right) | \left(T \lhd W_{I^c}\right) = \left.\left( S \lhd W^{(1)}\right) \middle| \left( T \lhd W^{(2)}\right)\right. .
	\end{align*}
	Thus, relation~\eqref{RPH1} is satisfied. Let us check property~\eqref{RPH2}:
	\begin{align*}
		(S\lhd T) \lhd W &= \left( \sum_{f:\IEM{1}{m}\hookrightarrow \IEM{1}{n}} \prod_{i \in \IEM{1}{n}}^| S_i\cdot T_{f^{-1}(\{i\})} \right) \lhd W \\
						&= \sum_{f:\IEM{1}{m}\hookrightarrow \IEM{1}{n}} \sum_{g:\IEM{1}{k}\hookrightarrow \IEM{1}{n}} \prod^|_{i \in \IEM{1}{n}} \left(S_i \cdot T_{f^{-1}(\{i\})} \cdot W_{g^{-1}(\{i\})}\right) \\
						&=  \sum_{\substack{f:\IEM{1}{m}\hookrightarrow \IEM{1}{n} \\ I\coloneqq \ima(f)}} \sum_{J \subseteq  \IEM{1}{k}} \sum_{\substack{g_1: J \hookrightarrow I \\ g_2: J^c \hookrightarrow I^c}} \prod^|_{i \in \IEM{1}{n}} U_i^{g_1,g_2}
	\end{align*}
	where $g_1=g|_{J}, g_2=g|_{J^c}$ and for $i\in I, U_i^{g_1,g_2}=S_i \cdot T_{f^{-1}(\{i\})} \cdot W_{g_1^{-1}(\{i\})}$ else $U_i^{g_1,g_2}=S_i \cdot W_{g_2^{-1}(\{i\})}$. Moreover, for any injection $g$ from $\IEM{1}{k}$ to $\IEM{1}{n}$ and $J\subseteq \IEM{1}{k}$ there exists a unique $h: J \hookrightarrow \IEM{1}{m}$ such that $g|_J = f \circ h$. Hence:
	\begin{align*}
		(S\lhd T) \lhd W &= S \lhd \left( \sum_{J \subseteq  \IEM{1}{k}} \sum_{h: J \hookrightarrow \IEM{1}{m}} \left.\left(\prod_{i \in \IEM{1}{m}}^| T_i \cdot W_{h^{-1}(\{i\})}\right) \middle| W_{J^{c}}\right. \right) \\
		&= S \lhd \left(\left.\left(T\lhd W^{(1)}\right)\middle| W^{(2)} \right.\right). \qedhere
	\end{align*}
\end{proof}

Corollary~\ref{Cor:CQQM} applies, as a consequence

Note that with the classic version, we are not able to conclude this isomorphism.

%

\subsection{A \PH{} algebra involving trees} \label{subsec:TreesPH}

\subsubsection{Trees and multisets}

\begin{defi}[Set of planar binary trees]
	We define $\Tree$ the set of \emph{planar rooted binary trees} with addition of one element $|$ which is an element with one leaf. We decompose this set with $\displaystyle\Tree=\bigcup_{n\in\N}\Tree_{n}$ where $\Tree_n$ is the subset of $\Tree$ containing trees with exactly $n+1$ leaves. Given $T$ a tree, we will denote $\Leaf(T)$ the set of leaves of this tree.
	
	Note that we will not represent explicitly the nodes of the trees of $\Tree$ as they are always at an intersection of two edges. 
	We denote $\K\Tree$ the $\K$-vector space generated by $\Tree$. It is a graded vector space with $\left( \K\Tree_{n}\right)_{n\in\N}$.
\end{defi}
\begin{Eg}
	For instance:
	\begin{align*}
		\Tree_0=\{ | \}, &&
		\Tree_1=\left\{\Y{}{}\right\}, &&
		\Tree_2=\left\{\balaisd{}{}{}, \balaisg{}{}{}\right\}.
	\end{align*}
\end{Eg}

\begin{defi}[Trees with decorated leaves]
	Let $X$ be a set. A \emph{binary tree with decorated leaves} is a couple $(T,f)$ where $T$ is an element of $\Tree$ and $f:\Leaf(T)\rightarrow X$. 
	We denote the set composed of those objects by $\Treedecleaf(X)$. For any $(T_1,T_2)\in\Treedecleaf(X)^2$, we denote $T_1\vee T_2$ the tree whose root has for left child $T_1$ and for right child $T_2$.
	
	A \emph{planar forest} is a word over the alphabet $\Treedecleaf(X)$. We denote the set of planar forests of such trees by $\Fdecleaf(X)$. For any binary forest ${F=T_1\dots T_k}$, $k$ is called the \emph{length} of $F$ and is denoted by $l(F)$. 
\end{defi}
\begin{Eg} 
	For instance with $X=\{a,b,c,d\}:$
	\[
	 \YY{$a$}{$d$}{$c$}{$b$}=\Y{$a$}{$d$} \vee \Y{$c$}{$b$} \text{ and } \balaisg{$a$}{$b$}{$c$}~\YY{$a$}{$d$}{$c$}{$b$}\neq \YY{$a$}{$d$}{$c$}{$b$}~\balaisg{$a$}{$b$}{$c$} \in\Fdecleaf(X).
	\]
\end{Eg}

\begin{defi}[Multiset] \label{defi:multiset}
	A \emph{multiset} $\mathcal{E}$ is a couple $(E,m)$ where $E$
	is a set that we will call the \emph{base set} of $\mathcal{E}$ and $m$ a map from $E$ to $\N$ called \emph{multiplicity}. We call $m^{-1}(\N\setminus\{0\})$ the \emph{support} of $\mathcal{E}$ and denote it by $\supp(\mathcal{E})$.
	We define the \emph{cardinality} $|\mathcal{E}|$ of $\mathcal{E}:=(E,m)$ by:
	\[
	|\mathcal{E}|:=\sum_{x\in E} m(x)=\sum_{x\in\supp(\mathcal{E})} m(x).
	\]
	We say a multiset $\mathcal{E}$ is finite if $|\mathcal{E}|$ is finite.
	Traditionally, multisets are denoted between double brackets and an element $x$ of $E$ appears as many times as $m(x)$. We do not mind of the order in which those elements appear.
\end{defi}
\begin{Egs}
	Here are some examples of multisets:
	\begin{itemize}
		\item We define the following multiset $(\IEM{1}{3},m)$ where $m(x)=x$ for all $x\in\IEM{1}{3}$. Then, this multiset can be represented by $\multiset{1,2,2,3,3,3} \text{ or by }\multiset{1,3,2,3,2,3}.$
		\item A set $E$ is identified to a multiset $\mathcal{E}=(E,m)$ where $m$ is a constant application equal to $1$. So, denoting $E=\{x_1,\dots,x_n\}$ where $n$ is a non-negative integer and $x_i\neq x_j$ for any $i\neq j$, we identify $\{x_1,\dots,x_n\}$ and $\multiset{ x_1,\dots,x_n}.$
	\end{itemize}
\end{Egs}	

\begin{defi}
	Let $E$ and $F$ be two sets, $f:E\rightarrow \N$ and $g:F\rightarrow \N$ two maps. We define:
	\begin{align*}
		&f\cup g:\left\lbrace\begin{array}{rcl}
			E \cup F & \rightarrow & \N, \\
			x & \mapsto& \begin{cases}
				f(x) &\text{ if } x\in E \text{ and } x\notin F, \\
				g(x) &\text{ if }x\in F \text{ and }x\notin E, \\
				f(x)+g(x) & \text{ if } x\in E\cap F.
			\end{cases}
		\end{array}\right.
	\end{align*}
	Let $\mathcal{E}=(E,m_E)$ and $\mathcal{F}=(F,m_F)$ be two multisets. We define the \emph{union} of these two multisets, denoted by $\mathcal{E}\cup\mathcal{F}$, as the multiset $(E\cup F, m_E\cup m_F).$
\end{defi}
\begin{Rq}
	Notice that $\supp\left(\mathcal{E} \cup \mathcal{F}\right)=\supp(\mathcal{E})\cup \supp(\mathcal{F}).$
\end{Rq}

\subsubsection{The algebraic structure}

\begin{defi}
	Let $S=S_1\dots S_n$ be an element of $\Fdecleaf(X)$ of length $n\in\N^*$ and $F=F_1\dots F_k\in\Fdecleaf(X)$ of length $k\in\N^*$. For each of these forests, we define by induction over $n$:
	\begin{align*}
		\text{If }n=1, F\curvearrowleft S&=\sum_{i=1}^k F_1\dots \left(F_i\vee S\right) \dots F_{k}, \\
		\text{otherwise, }F\curvearrowleft S&=(F\curvearrowleft S')\lhd S_n, \text{ where } S'= S_1\dots S_{n-1}.
	\end{align*}
\end{defi}

\begin{Eg}
	For instance, for $a,b,c,d,e,f,g,h,i\in X$:
	\begin{align*}
		&\Y{\footnotesize{$a$}}{\footnotesize{$b$}}\balaisd{\footnotesize{$c$}}{\footnotesize{$d$}}{\footnotesize{$e$}} \curvearrowleft \left(\balaisg{\footnotesize{$f$}}{\footnotesize{$g$}}{\footnotesize{$h$}} \Y{\footnotesize{$i$}}{\footnotesize{$j$}}\right) \\
		=&\left( \Y{\footnotesize{$a$}}{\footnotesize{$b$}}\balaisd{\footnotesize{$c$}}{\footnotesize{$d$}}{\footnotesize{$e$}}\curvearrowleft  \balaisg{\footnotesize{$f$}}{\footnotesize$g$}{\footnotesize{$h$}} \right)\lhd\Y{\footnotesize{$i$}}{\footnotesize{$j$}} \\
		=&\left(\raisebox{-0.3\height}{\begin{tikzpicture}[line cap=round,line join=round,>=triangle 45,x=0.225cm,y=0.225cm]
				\draw (0,0) -- (0,1);
				\draw (0,1) -- (-3,4);
				\draw (-2,3) -- (-1,4);
				\draw (0,1) -- (3,4);
				\draw (2,3) -- (1,4);
				\draw (1.5,3.5) -- (2,4);
				\draw[above] (-3,4) node {\footnotesize{$a$}};
				\draw[above] (-1,4) node {\footnotesize{$b$}};
				\draw[above] (1,4) node {\footnotesize{$f$}};
				\draw[above] (2,4) node {\footnotesize{$g$}};
				\draw[above] (3,4) node {\footnotesize{$h$}};
		\end{tikzpicture}}\,\balaisd{\footnotesize{$c$}}{\footnotesize{$d$}}{\footnotesize{$e$}}+\Y{\footnotesize{$a$}}{\footnotesize{$b$}} \,\raisebox{-0.3\height}{\begin{tikzpicture}[line cap=round,line join=round,>=triangle 45,x=0.225cm,y=0.225cm]
				\draw (0,0) -- (0,1);
				\draw (0,1) -- (-3,4);
				\draw (-2,3) -- (-1,4);
				\draw (-1.5,3.5) -- (-2,4);
				\draw (0,1) -- (3,4);
				\draw (2,3) -- (1,4);
				\draw (1.5,3.5) -- (2,4);
				\draw[above] (-3,4) node {\footnotesize{$c$}};
				\draw[above] (-2,4) node {\footnotesize{$d$}};
				\draw[above] (-1,4) node {\footnotesize{$e$}};
				\draw[above] (1,4) node {\footnotesize{$f$}};
				\draw[above] (2,4) node {\footnotesize{$g$}};
				\draw[above] (3,4) node {\footnotesize{$h$}};
		\end{tikzpicture}}\right) \curvearrowleft  \Y{\footnotesize{$i$}}{\footnotesize{$j$}} \\
		=&\raisebox{-0.3\height}{\begin{tikzpicture}[line cap=round,line join=round,>=triangle 45,x=0.225cm,y=0.225cm]
				\draw (0,0) -- (0,1);
				\draw (0,1) -- (-3,4);
				\draw (-2.5,3.5) -- (-2,4);
				\draw (-1,2) -- (1,4);
				\draw (0,3) -- (-1,4);
				\draw (-0.5, 3.5) -- (0,4);
				\draw (0,1) -- (3,4);
				\draw (2.5,3.5) -- (2,4);
				\draw[above] (-3,4) node {\footnotesize{$a$}};
				\draw[above] (-2,4) node {\footnotesize{$b$}};
				\draw[above] (-1,4) node {\footnotesize{$f$}};
				\draw[above] (0,4) node {\footnotesize{$g$}};
				\draw[above] (1,4) node {\footnotesize{$h$}};
				\draw[above] (2,4) node {\footnotesize{$i$}};
				\draw[above] (3,4) node {\footnotesize{$j$}};
		\end{tikzpicture}}\,\balaisd{\footnotesize{$c$}}{\footnotesize{$d$}}{\footnotesize{$e$}}+\raisebox{-0.3\height}{\begin{tikzpicture}[line cap=round,line join=round,>=triangle 45,x=0.225cm,y=0.225cm]
				\draw (0,0) -- (0,1);
				\draw (0,1) -- (-3,4);
				\draw (-2,3) -- (-1,4);
				\draw (0,1) -- (3,4);
				\draw (2,3) -- (1,4);
				\draw (1.5,3.5) -- (2,4);
				\draw[above] (-3,4) node {\footnotesize{$a$}};
				\draw[above] (-1,4) node {\footnotesize{$b$}};
				\draw[above] (1,4) node {\footnotesize{$f$}};
				\draw[above] (2,4) node {\footnotesize{$g$}};
				\draw[above] (3,4) node {\footnotesize{$h$}};
		\end{tikzpicture}}\,\raisebox{-0.3\height}{\begin{tikzpicture}[line cap=round,line join=round,>=triangle 45,x=0.225cm,y=0.225cm]
				\draw (0,0) -- (0,1);
				\draw (0,1) -- (-2,3);
				\draw (-1,2) -- (0,3);
				\draw (-0.5,2.5) -- (-1,3);
				\draw (0,1) -- (2,3);
				\draw (1.5,2.5) -- (1,3);
				\draw[above] (-2,3) node {\footnotesize{$c$}};
				\draw[above] (-1,3) node {\footnotesize{$d$}};
				\draw[above] (0,3) node {\footnotesize{$e$}};
				\draw[above] (1,3) node {\footnotesize{$i$}};
				\draw[above] (2,3) node {\footnotesize{$j$}};
		\end{tikzpicture}}+\YY{\footnotesize{$a$}}{\footnotesize{$b$}}{\footnotesize{$i$}}{\footnotesize{$j$}} \,\raisebox{-0.3\height}{\begin{tikzpicture}[line cap=round,line join=round,>=triangle 45,x=0.225cm,y=0.225cm]
				\draw (0,0) -- (0,1);
				\draw (0,1) -- (-3,4);
				\draw (-2,3) -- (-1,4);
				\draw (-1.5,3.5) -- (-2,4);
				\draw (0,1) -- (3,4);
				\draw (2,3) -- (1,4);
				\draw (1.5,3.5) -- (2,4);
				\draw[above] (-3,4) node {\footnotesize{$c$}};
				\draw[above] (-2,4) node {\footnotesize{$d$}};
				\draw[above] (-1,4) node {\footnotesize{$e$}};
				\draw[above] (1,4) node {\footnotesize{$f$}};
				\draw[above] (2,4) node {\footnotesize{$g$}};
				\draw[above] (3,4) node {\footnotesize{$h$}};
		\end{tikzpicture}}+\Y{\footnotesize{$a$}}{\footnotesize{$b$}}\,\raisebox{-0.3\height}{\begin{tikzpicture}[line cap=round,line join=round,>=triangle 45,x=0.225cm,y=0.225cm]
				\draw (0,0) -- (0,1);
				\draw (0,1) -- (-4,5);
				\draw (-3,4) -- (-2,5);
				\draw (-2.5,4.5) -- (-3,5);
				\draw (0,1) -- (4,5);
				\draw (3, 4) -- (2,5);
				\draw (-1.5,2.5) -- (1,5);
				\draw (0,4) -- (-1,5);
				\draw (-0.5,4.5)--(0,5);
				\draw[above] (-4,5) node {\footnotesize{$c$}};
				\draw[above] (-3,5) node {\footnotesize{$d$}};
				\draw[above] (-2,5) node {\footnotesize{$e$}};
				\draw[above] (-1,5) node {\footnotesize{$f$}};
				\draw[above] (0,5) node {\footnotesize{$g$}};
				\draw[above] (1,5) node {\footnotesize{$h$}};
				\draw[above] (2,5) node {\footnotesize{$i$}};
				\draw[above] (4,5) node {\footnotesize{$j$}};
			\end{tikzpicture}
		}.
	\end{align*}
\end{Eg}

\begin{defi}\label{defi:Fforet}
	We define a multiset of elements of $\Fdecleaf(X)$, for any trees $T_1,\dots,T_k,T_{k+1}$ by:
	\begin{align*} 
		\F\left(T_1\right)&=\multiset{T_1}, \\
		\F\left(T_1\dots T_k T_{k+1}\right)&=\bigcup_{S\in\F(T_1\dots T_k)} \bigcup_{r=1}^{m_{\F\left(T_1\dots T_k\right)}(S)}\multiset{ST_{k+1}} \bigcup \bigcup_{i=1}^k\F\bigl(T_1\dots \left(T_i\vee T_{k+1}\right)\dots T_k\bigr).
	\end{align*}  
\end{defi}
\begin{Rq}
	The term $\displaystyle \bigcup_{r=1}^{m_{\F\left(T_1\dots T_k\right)}(S)}\multiset{ST_{k+1}}$ is the multiset with one element $\multiset{ST_{k+1}}$ appearing as many times as occurrences of $S$ in $\F\left(T_1\dots T_k\right).$  
\end{Rq}
\begin{Eg}
	For instance:
	\begin{align*}
		\F\left( T_1 T_2 \right)&=\multiset{T_1T_2,  \Y{$T_1$}{$T_2$}}, \\
		\F\left( T_1 T_2T_3\right)&=\multiset{T_1T_2T_3,\Y{$T_1$}{$T_2$}T_3,T_1\Y{$T_2$}{$T_3$},  \Y{$T_1$}{$T_3$}T_2 , \balaisg{$T_1$}{$T_3$}{$T_2$},\balaisd{$T_1$}{$T_2$}{$T_3$}}.
	\end{align*}
\end{Eg}
\begin{Rq}
	Having a multiset here is really important. Otherwise, in the case where $T_3=T_2$, one element is left in $\F\left( T_1 T_2T_3\right)$.
\end{Rq}

\begin{defi}
	Let $X$ be a set.
	We consider the Hopf algebra $(\K\FLT(X),m,1,\Delta,\e)$ where $m$ is the concatenation of forests, $1$ is the empty forest, $\Delta$ is the deshuffle coproduct of forests and $\e$ is the linear map equal to one for the empty forest and it is zero otherwise.
	We define:
	\[
	F\lhd F'=\sum_{S\in\F(F')} (-1)^{l(S)+l(F')} m_{\F(F')}(S)\,F\curvearrowleft S.
	\]
	According to remark~\ref{Rq:Card}, this sum can be rewritten over a sum over a symmetric group.
\end{defi}
\begin{Rq}\label{Rq:Card}
	By definition of those multisets from definition~\ref{defi:Fforet}, one has for all $n\in\N$:
	\[
	\forall \, T_1,\dots,T_n \in \Treedecleaf(X), \left|\F(T_1\dots T_n)\right|=n!.
	\]
	Hence, there exists a multiset surjection between this set and the $n$-th symmetric group $S_n$ enabling to index the sum over symmetric groups. We will not detail it in this paper.
\end{Rq}

\begin{Eg}
	We give some examples of computations in this algebra with simple trees decorated by a set of cardinality one. So we do not need to represent decorations here:
	\begin{align*}
		\Y{}{} \lhd \Y{}{}\Y{}{} &= \begin{tikzpicture}[anchor = baseline, baseline = 1.5em, x=1em, y=1em]
			\draw (0,0) -- (0,1); 
			\draw (0,1) -- (-2,3);
			\draw (0,1) -- (2,3);
			\draw (1.5,2.5) -- (1,3);
			\draw (-1.5,2.5) -- (-1,3);
			\draw (-0.75,1.75) -- (0.5,3);
			\draw (0,2.5) -- (-0.5,3);
		\end{tikzpicture} - \begin{tikzpicture}[anchor = baseline, baseline = 1.5em, x=1em, y=1em]
		\draw (0,0) -- (0,1); 
		\draw (0,1) -- (-2,3);
		\draw (0,1) -- (2,3);
		\draw (1.5,2.5) -- (1,3);
		\draw (-1.5,2.5) -- (-1,3);
		\draw (0.75,1.75) -- (-0.5,3);
		\draw (0,2.5) -- (0.5,3);
		\end{tikzpicture}, \\ 
		\Y{}{}\Y{}{} \lhd \Y{}{} &= \YY{}{}{}{}\Y{}{} + \Y{}{}\YY{}{}{}{},\\
		\Y{}{} \lhd \Y{}{}\Y{}{}\Y{}{} &= \sum_{S\in\F\left(\Yind{}{}\Yind{}{}\Yind{}{}\right)} (-1)^{l(S)+3} m_{\F\left(\Yind{}{}\Yind{}{}\Yind{}{}\right)} \Y{}{}\curvearrowleft S \\
		&=\begin{tikzpicture}[anchor = baseline, baseline = 1.5em, x=1em, y=1em]
			\draw (0,0) -- (0,1);
			\draw (0,1) -- (2.5,3.5);
			\draw (0,1) -- (-2.5,3.5);
			\draw (0.75,1.75) -- (-1.,3.5);
			\draw (-0.5,3.) -- (0,3.5);
			\draw (1.375,2.375) -- (0.25,3.5);
			\draw (0.75,3) -- (1.25,3.5);
			\draw (2,3) -- (1.5,3.5);
			\draw (-2.,3) -- (-1.5,3.5);
		\end{tikzpicture} + \begin{tikzpicture}[anchor = baseline, baseline = 1.5em, x=1em, y=1em]
		\draw (0,0) -- (0,1);
		\draw (0,1) -- (2.5,3.5);
		\draw (0,1) -- (-2.5,3.5);
		\draw (0.75,1.75) -- (-1.,3.5);
		\draw (-0.5,3.) -- (0,3.5);
		\draw (0.75,3) -- (0.25,3.5);
		\draw (0.125,2.375) -- (1.25,3.5);
		\draw (2,3) -- (1.5,3.5);
		\draw (-2.,3) -- (-1.5,3.5);
		\end{tikzpicture} - \begin{tikzpicture}[anchor = baseline, baseline = 1.5em, x=1em, y=1em]
		\draw (0,0) -- (0,1);
		\draw (0,1) -- (2.5,3.5);
		\draw (0,1) -- (-2.5,3.5);
		\draw (-0.5,3) -- (-1.,3.5);
		\draw (-1.25,2.25) -- (0,3.5);
		\draw (1.375,2.375) -- (0.25,3.5);
		\draw (0.75,3) -- (1.25,3.5);
		\draw (2,3) -- (1.5,3.5);
		\draw (-2.,3) -- (-1.5,3.5);
		\end{tikzpicture} - 2~\begin{tikzpicture}[anchor = baseline, baseline = 1.5em, x=1em, y=1em]
		\draw (0,0) -- (0,1);
		\draw (0,1) -- (2.5,3.5);
		\draw (0,1) -- (-2.5,3.5);
		\draw (0.125,2.375) -- (-1.,3.5);
		\draw (-0.5,3.) -- (0,3.5);
		\draw (0.75,3) -- (0.25,3.5);
		\draw (-0.625,1.625) -- (1.25,3.5);
		\draw (2,3) -- (1.5,3.5);
		\draw (-2.,3) -- (-1.5,3.5);
		\end{tikzpicture} + \begin{tikzpicture}[anchor = baseline, baseline = 1.5em, x=1em, y=1em]
		\draw (0,0) -- (0,1);
		\draw (0,1) -- (2.5,3.5);
		\draw (0,1) -- (-2.5,3.5);
		\draw (-0.5,3) -- (-1.,3.5);
		\draw (-1.25,2.25) -- (0,3.5);
		\draw (0.75,3) -- (0.25,3.5);
		\draw (-0.625,1.625) -- (1.25,3.5);
		\draw (2,3) -- (1.5,3.5);
		\draw (-2.,3) -- (-1.5,3.5);
		\end{tikzpicture} \\
		&=\left( \Y{}{} \lhd \Y{}{}\Y{}{}\right) \lhd \Y{}{} - \Y{}{}\lhd \left( \Y{}{}\Y{}{} \lhd \Y{}{}\right).
	\end{align*}
\end{Eg}

\begin{Prop}
	Let $X$ be a set. The algebra $(\K\FLT(X),m,1,\Delta,\e, \lhd)$ is a \PH{} algebra.
\end{Prop}
\begin{proof}
	Let $X$ be a set of decorations. Let us check it is indeed a \PH{} algebra
	\begin{itemize}
		\item given two forests $F,F'$ and a tree $T$, the construction of $\F$ from definition~\ref{defi:Fforet} implies:
		\begin{align}
			& (-1)^{l(F')}F \lhd  (F'T) \notag \\
			=& (-1)^{l(F')} \sum_{S\in\F\left(F'T\right)}  (-1)^{l(S) + l(F'T)} m_{\F(F'T)}(S) F \curvearrowleft S \notag \\
						  =& \sum_{S\in\F\left(F'T\right)}  (-1)^{l(S) +1} m_{\F(F'T)}(S) F \curvearrowleft S \notag \\
						  =& \sum_{S\in\F\left(F'\right)}  (-1)^{l(ST)} m_{\F(F')}(S) F \curvearrowleft (ST) + \sum_{S\in\F\left(F'\right)}  (-1)^{l(S) +1} m_{\F(F'T)}(S) F \curvearrowleft (S \lhd T) \notag \\
						  =& \sum_{S\in\F\left(F'\right)}  (-1)^{l(ST) } m_{\F(F')}(S) (F \curvearrowleft S) \lhd T + \sum_{S\in\F\left(F'\lhd T\right)}  (-1)^{l(S) +1} m_{\F(F')}(S) F \curvearrowleft S \notag \\
						  =& (-1)^{l(F')} \left[ (F \lhd F') \lhd T - F \lhd (F' \lhd T)\right].  \label{eq:stat1}
		\end{align}
		\item given three forests $F,G,H$, we prove $FG\lhd H = \left(F \lhd H^{(1)}\right)\cdot \left( G \lhd H^{(2)} \right)$ by induction over $l(H).$
		The initialization is straightforward. Then, suppose there exists $n\in\N^*$ such that for all forests $H$ with $l(H)\leq n$, this statement is true. Let $T$ be a tree and put $H'= H T$ with $l(H)=n$. Then, according to definition~\ref{defi:Fforet} and the induction hypothesis:
		\begin{align*}
			&(-1)^{l(H)}FG \lhd HT \\
			=& \sum_{S\in\F\left(HT\right)}  (-1)^{l(S) } m_{\F(HT)}(S) (FG) \curvearrowleft S \\
			=&\sum_{S'\in\F\left(H\right)}  (-1)^{l(S')} m_{\F(H)}(S) (FG) \curvearrowleft (S'T)  + (-1)^{l(S'\lhd T)+1} m_{\F(H)}(S') (FG) \curvearrowleft (S'\lhd T) \\
			=&\sum_{S'\in\F\left(H\right)}  (-1)^{l(S')} m_{\F(H)}(S) (FG \curvearrowleft S')\lhd T  - (-1)^{l(S'\lhd T)} m_{\F(H)}(S') (FG) \curvearrowleft (S'\lhd T) \\
			=& (-1)^{l(H)}\left( \left( F \lhd H^{(1)}\right)\cdot \left( G \lhd H^{(2)}\right)\right)\lhd T - \left( F \lhd \left( H \lhd T \right)^{(1)}\right)\cdot \left( G \lhd \left( H \lhd T\right)^{(2)}\right).
		\end{align*}
		By definition of the deshuffle coproduct and the result~\eqref{eq:stat1}, we end with:
		\begin{align*}
			&FG \lhd HT \\
			 &=\left( \left( F \lhd H^{(1)}\right)\cdot \left( G \lhd H^{(2)}\right)\right)\lhd T - \left( F \lhd \left( H^{(1)} \lhd T \right)\right)\cdot \left( G \lhd H^{(2)} \right) \\
			 &- \left( F \lhd  H^{(1)}\right) \cdot \left( G \lhd \left( H^{(2)} \lhd T\right)\right) \\
			 &=\left( \left( F \lhd H^{(1)}\right)\cdot \left( G \lhd H^{(2)}\right)\right)\lhd T + F \lhd \left( H^{(1)}T\right) \cdot \left(G \lhd H^{(2)}\right) \\
			 & -  \left(\left( F \lhd H^{(1)}\right)\lhd T\right)\cdot \left(G \lhd H^{(2)}\right)+ \left(F \lhd H^{(1)}\right) \cdot \left(G \lhd
			  H^{(2)} T\right) \\  &- \left(F \lhd H^{(1)}\right)\cdot  \left(\left( G \lhd H^{(2)}\right)\lhd T \right) \\
			 &= \left( F \lhd \left(HT\right)^{(1)} \right) \cdot \left(G \lhd (HT)^{(2)}\right).
		\end{align*}
		\item Finally, let us show $\Delta(F' \lhd G)= \Delta(F')\lhd \Delta(G)$ by induction over $l(F')+ l(G)$. The initial case with $l(F')+l(G)=2$ is straightforward. For the heredity, let us there exists $n>2$ such that for any $F',G$ with $l(F')+l(G)\geq 2, \Delta(F' \lhd G)= \Delta(F')\lhd \Delta(G)$. In the case where $l(F')>2$, then there exists $F,H\in\Treedecleaf(X)$ with such that $F'= FH$. Hence, using the second point, the induction hypothesis and the cocommutativity of $\Delta$:
		\begin{align*}
			\Delta(FH \lhd G) &= \Delta\left(F \lhd G^{(1)}\right)\cdot \Delta\left( H \lhd G^{(2)}\right) \\
								&= \left(F^{(1)} \lhd G^{(1)}\right) \cdot \left( H^{(1)} \lhd G^{(3)} \right) \otimes \left(F^{(2)} \lhd G^{(2)}\right) \cdot \left( H^{(2)} \lhd G^{(4)} \right) \\
								&= \left(F^{(1)} \lhd G^{(1)}\right) \cdot \left( H^{(1)} \lhd G^{(2)} \right) \otimes \left(F^{(2)} \lhd G^{(3)}\right) \cdot \left( H^{(2)} \lhd G^{(4)} \right) \\
								&= \left( FH \right)^{(1)} \lhd G^{(1)} \otimes \left( FH \right)^{(2)} \lhd G^{(2)} \\
								&= \Delta(FH) \lhd \Delta(G).
		\end{align*}
		In the case where $l(F')=1$, then there exist a forest $F$ and a tree $T$ such that $G= FT$. So using equation~\eqref{eq:stat1} and our induction hypothesis:
		\begin{align*}
			\Delta(F'\lhd G) &= \Delta\left( F' \lhd (F \lhd T)\right) - \Delta((F' \lhd F) \lhd T) \\
							&= \Delta \left( F'\right) \lhd \Delta (F \lhd T) - \Delta(F' \lhd F) \lhd \Delta(T) \\
							&=\left( F'^{(1)} \lhd F^{(1)}\right) \lhd T^{(1)} \otimes \left( F'^{(2)} \lhd F^{(2)}\right)\lhd T^{(2)} \\ &-  F'^{(1)} \lhd \left(F^{(1)} \lhd T^{(1)}\right) \otimes  F'^{(2)} \lhd \left(F^{(2)}\lhd T^{(2)}\right) \\
							&= F'^{(1)} \lhd (FT) ^{(1)} \otimes F'^{(2)} \lhd (FT)^{(2)} \\
							&= \Delta(F')\lhd \Delta(G).
		\end{align*}
	\end{itemize}
	Therefore, it shows $(\K\FLT(X),m,1,\Delta,\e, \lhd)$ is a \PH{} algebra by propositioné~\ref{Prop:OptiLRPH}.
\end{proof}

As a consequence, we can apply corollary~\ref{Cor:CQQM} and deduce that $\K\FLT(X)\approx \U{\Treedecleaf(X)}$ where $\Treedecleaf(X)$ is seen as a \PL{} algebra whose Lie bracket is the commutator for the concatenation and $\lhd$ is the grafting operator. This operator for trees is magmatic.

Hence, it turns out $\K\FLT(V)$ where $V$ is one dimensional is isomorphic to the Munthe-Kaas--Wright algebra $H_{MKW}$~\cite{Munthe_Kaas_2006} as \PH{} algebras. One can give the explicit isomorphism but it is not relevant to detail it here.

\subsection{Conclusion}

We made a link between the notions of left/right \PL{} algebras and under some hypotheses, we linked left and right \PH{} algebras. We proved a \CQMM{} theorem version for right \PH{} algebras. It enables us to state complex isomorphisms between \PH{} algebras. It is a step forward for the study of the algebraic structure of \PH{} algebra.
Moreover, one can investigate the following questions:
\begin{enumerate}
	\item Can we recover the stronger version involving group-like elements ~\cite[theorem~4.5.1]{CartierPatras} in the \PH{} case ? This requires to look at Post-groups structures~\cite{Al_Kaabi_2022}. 
	\item Is there a way to get an analogous of proposition~\ref{Prop:OptiLRPH} without any hypothesis? This is particularly connected to the question of existence of a non-cocommutative \PH{} algebra.
	\item In this paper, we did not talk about the subadjacent Hopf algebra hidden in a \PH{} algebra like at theorem 1 from \cite{siso} or at theorem 2.4 from \cite{LPostLie}. Is there a way to improve our results using this notion? 
\end{enumerate}

\bibliography{biblio_PH_V10}

\begin{thebibliography}{10}

\bibitem{Al_Kaabi_2022}
M.~J.~H. Al-Kaabi, K.~Ebrahimi-Fard, D.~Manchon, and H.~Z. Munthe-Kaas.
\newblock Algebraic aspects of connections: from torsion, curvature, and
  post-lie algebras to gavrilov's double exponential and special polynomials,
  2022.

\bibitem{Busnot_Laurent_2025}
Adrien Busnot-Laurent, Yunnan Li, and Yunhe Sheng.
\newblock Post-hopf algebroids, post-lie-rinehart algebras and geometric
  numerical integration, 2025.

\bibitem{Butcher_63}
J.~C. Butcher.
\newblock Coefficients for the study of {Runge}-{Kutta} integration processes.
\newblock {\em J. Aust. Math. Soc.}, 3:185--201, 1963.

\bibitem{Butcher_72}
J.~C. Butcher.
\newblock An algebraic theory of integration methods.
\newblock {\em Math. Comput.}, 26:79--106, 1972.

\bibitem{Butcher_96}
J.~C. Butcher and J.~M. Sanz-Serna.
\newblock The number of conditions for a {Runge}-{Kutta} method to have
  effective order {{\(p\)}}.
\newblock {\em Appl. Numer. Math.}, 22(1-3):103--111, 1996.

\bibitem{CartierPatras}
Pierre Cartier and Fr\'{e}d\'{e}ric Patras.
\newblock {\em Classical {H}opf algebras and their applications}, volume~29 of
  {\em Algebra and Applications}.
\newblock Springer, Cham, [2021] \copyright 2021.

\bibitem{Curry2018}
Charles {Curry}, Kurusch {Ebrahimi-Fard}, and Hans {Munthe-Kaas}.
\newblock {What is a post-Lie algebra and why is it useful in geometric
  integration}.
\newblock In {\em Numerical mathematics and advanced applications. ENUMATH
  2017. Selected papers based on the presentations at the European conference,
  Bergen, Norway, September 25--29, 2017}, pages 429--437. Cham: Springer,
  2018.

\bibitem{EnvPost}
Kurusch Ebrahimi-Fard, Alexander Lundervold, and Hans~Z. Munthe-Kaas.
\newblock On the {Lie} enveloping algebra of a post-{Lie} algebra.
\newblock {\em J. Lie Theory}, 25(4):1139--1165, 2015.

\bibitem{Floystad_2020}
Gunnar Fløystad, Dominique Manchon, and Hans~Z. Munthe-Kaas.
\newblock The universal pre-lie-rinehart algebras of aromatic trees, 2020.

\bibitem{AlgHopf}
Lo\"{\i}c {Foissy}.
\newblock Algèbres de {Hopf} combinatoires.
\newblock available at \\ http://loic.foissy.free.fr/pageperso/Hopf.pdf.

\bibitem{siso}
Lo{\"{\i}}c Foissy.
\newblock Extension of the product of a post-{Lie} algebra and application to
  the {SISO} feedback transformation group.
\newblock In {\em Computation and combinatorics in dynamics, stochastics and
  control. The Abel symposium, Rosendal, Norway, August 16--19, 2016. Selected
  papers}, pages 369--399. Cham: Springer, 2018.

\bibitem{GNI}
Ernst Hairer, Christian Lubich, and Gerhard Wanner.
\newblock {\em Geometric numerical integration. {Structure}-preserving
  algorithms for ordinary differential equations}, volume~31 of {\em Springer
  Ser. Comput. Math.}
\newblock Berlin: Springer, reprint of the second 2006 ed. edition, 2010.

\bibitem{Nomizu_Kobayashi}
Shoshichi Kobayashi and Katsumi Nomizu.
\newblock {\em Foundations of differential geometry. {I}}, volume~15 of {\em
  Intersci. Tracts Pure Appl. Math.}
\newblock Interscience Publishers, New York, NY, 1963.

\bibitem{LPostLie}
Yunnan Li, Yunhe Sheng, and Rong Tang.
\newblock Post-hopf algebras, relative rota-baxter operators and solutions of
  the yang-baxter equation, 2022.

\bibitem{Montgomery}
Susan Montgomery.
\newblock {\em Hopf algebras and their actions on rings. {Expanded} version of
  ten lectures given at the {CBMS} {Conference} on {Hopf} algebras and their
  actions on rings, which took place at {DePaul} {University} in {Chicago},
  {USA}, {August} 10-14, 1992}, volume~82 of {\em Reg. Conf. Ser. Math.}
\newblock Providence, RI: American Mathematical Society, 1993.

\bibitem{Munthe_Kaas_2006}
H.~Z. Munthe-Kaas and W.~M. Wright.
\newblock On the hopf algebraic structure of lie group integrators, 2006.

\bibitem{FreePL}
Hans~Z. Munthe-Kaas and Alexander Lundervold.
\newblock On post-{Lie} algebras, {Lie}-{Butcher} series and moving frames.
\newblock {\em Found. Comput. Math.}, 13(4):583--613, 2013.

\bibitem{OudomGuin}
J.-M. Oudom and D.~Guin.
\newblock On the {Lie} enveloping algebra of a pre-{Lie} algebra.
\newblock {\em J. \(K\)-Theory}, 2(1):147--167, 2008.

\bibitem{Sanz-Serna}
Jesus~Maria Sanz-Serna and Ander Murua.
\newblock Formal series and numerical integrators: some history and some new
  techniques, 2015.

\bibitem{Sciandra_25}
Andrea Sciandra.
\newblock Yetter-{Drinfeld} post-{Hopf} algebras and {Yetter}-{Drinfeld}
  relative {Rota}-{Baxter} operators.
\newblock Preprint, {arXiv}:2407.17922 [math.{QA}] (2024), 2024.

\bibitem{Sweedler}
M.~E. Sweedler.
\newblock Hopf algebras.
\newblock New {York}: {W}.{A}. {Benjamin}, {Inc}. 1969, 336 p. (1969)., 1969.

\bibitem{Vallette}
Bruno Vallette.
\newblock Homology of generalized partition posets.
\newblock {\em J. Pure Appl. Algebra}, 208(2):699--725, 2007.

\end{thebibliography}
\bibliographystyle{plain}

\end{document}